\let \n = \noindent
\let \dis = \displaystyle
\newcommand{\be}{\begin{equation}}
\newcommand{\ee}{\end{equation}}
\newtheorem{thm}{Theorem}[section]
\newtheorem{prop}[thm]{Proposition}
\newtheorem{lem}[thm]{Lemma}
\newtheorem{cor}[thm]{Corollary}
\numberwithin{equation}{section}
\author{Wael Abdelhedi$^\dag$,  Hichem Chtioui$^\dag$ and Hichem Hajaiej$^\ddag$ \footnote{ E-mail
 addresses: \texttt{wael\_hed@yahoo.fr} ( W. Abdelhedi), \texttt{Hichem.Chtioui@fss.rnu.tn} (H. Chtioui) and \texttt{hichem.hajaiej@gmail.com.}(H. Hajaiej).}\\
{\footnotesize $^\dag$Department of mathematics,}\\ {\footnotesize
Faculty of Sciences of Sfax, 3018 Sfax,
Tunisia.}\\ {\footnotesize $^\ddag$
NYU Shanghai.}}
\title{A  complete study  of  the lack of compactness  and  existence  results  of a  Fractional Nirenberg Equation via a  flatness hypothesis: Part I}%
\begin{document}

\date{ }

\maketitle

{\footnotesize

\n{\bf Abstract.} In this paper, we consider a nonlinear critical problem involving the fractional  Laplacian operator arising in conformal geometry, namely the prescribed  $\sigma$-curvature problem on the standard  n-sphere $n\geq 2$. Under the assumption that the prescribed function is flat  near its critical points, we give precise estimates on the losses of the compactness and we provide existence results. In this first part, we will focus on the case $1<\beta\leq n-2\sigma$, which was not included in the results of Jin, Li and   Xiong \cite{jlx1} and \cite{jlx2}.\\
\n{\bf  MSC 2000:}\quad  35J60, 35B33, 35B99, 35R11, 58E30.\\
\n {\bf Key words:} Fractional Laplacian, critical exponent, $\sigma$-curvature, critical points at infinity.}




\section{Introduction and main results}

Fractional calculus has attracted a lot of scientists during the last decades. This is essentially due to its numerous applications in various domains: Medicine, modeling populations, biology, earthquakes, optics, signal processing, astrophysics, water waves, porous media, nonlocal diffusion, image reconstruction problems; see \cite{hajaiej1} and references [1, 2, 6, 7, 13, 14, 19, 22, 25, 36, 38, 41, 43, 45, 46, 58] therein.

Many important properties of the Laplacian are not inherited, or are only partially satisfied, by its fractional powers. This gave birth to many challenging and rich mathematical problems. However, the literature remained quite  silent until the publication of the breakthrough paper of Caffarelli and Silvester in 2007, \cite{cas1}. This seminal work has hugely contributed to unblock a lot of difficult problems and opened the way for the resolution of many other ones. In this paper, we study another important fractional PDE whose resolution also requires some novelties because of the nonlocal properties  of the operator present in it. More precisely,  we investigate the existence of  solutions for the following critical fractional nonlinear equation

\begin{equation}\label{1.3}
    P_{\sigma}u = c(n, \sigma) K u^{\frac{n+2\sigma}{n-2\sigma}},
    \qquad u > 0, \hbox{ on } \; S^{n}.
\end{equation}
where $\sigma\in (0, 1)$,  $K$ is  a positive function  defined on $(S^n, g)$,   $$P_{\sigma}=\frac{\Gamma(B+\frac{1}{2} +\sigma)}{\Gamma(B+\frac{1}{2} -\sigma)}, \qquad B= \sqrt{-\Delta_g+\Big(\frac{n-1}{2}\Big)^2},$$
$\Gamma$ is the Gamma function, $c(n, \sigma) = \Gamma (\frac{n}{2}+\sigma)/ \Gamma (\frac{n}{2}-\sigma)$, and $\Delta_g$ is the Laplace-Beltrami operator on $(S^n, g)$. The operator $P_{\sigma}$
can be seen more concretely on $\mathbb{R}^n$ using stereographic projection. The stereographic projection from $S^n\setminus\{N\}$ to $\mathbb{R}^n$ is the inverse of $F: \mathbb{R}^n\rightarrow S^n\setminus\{N\}$ defined by $$F(x)=\Big(\frac{2x}{1+|x|^2}, \frac{|x|^2-1}{|x|^2+1}\Big)$$where $N$ is the north pole of $S^n$. For all $f\in C^\infty(S^n)$, we have
\begin{equation}\label{}
 (P_{\sigma}(f))\circ F= \Big(\frac{2}{1+|x|^2}\Big)^\frac{-(n+2\sigma)}{2} \Big(-\Delta\Big)^{\sigma}\Big(\Big(\frac{2}{1+|x|^2}\Big)^\frac{n-2\sigma}{2} (f\circ F) \Big)
\end{equation}
where $\big(-\Delta\big)^{\sigma}$ is the fractional Laplacian operator (see, e.g., page 117 of \cite{Stein}).

Problem (\ref{1.3}) is heavily connected to the fractional order curvature, usually called the $\sigma$-curvature. This challenging problem has been first addressed in \cite{jlx1} and \cite{jlx2}. In these two seminal papers, the authors have been able to show the existence of solutions of (\ref{1.3}) and to derive some compactness properties. More precisely, thanks to a very subtle approach based on approximation of the solutions of (\ref{1.3}) by a blowing-up subcritical method, they proved the existence of solutions for the critical fractional Nirenberg problem (\ref{1.3}), (see Theorem 1.1 and Theorem 1.2 of \cite{jlx1}). Their method is based on tricky variational tools, in particular they have established many interesting fractional functional inequalities. Their main hypothesis is the so-called flatness condition:\\
Let $K: \mathbb{S}^{n}\rightarrow\mathbb{R}$, be a $C^2$ positive function. We say that $K$ satisfies a flatness condition $(f)_{\beta}$: if for each critical point $y$ of $K$ there exist $(b_i)_{i\leq n}\in\mathbb{R}^*$, such that in some geodesic normal coordinate centered at $y$, we have
\begin{equation}\label{hf}
K(x) = K(y) + \sum_{i=1}^{n}b_i|(x-y)_i|^\beta + R(x-y),
\end{equation}where $ b_i=b_i(y)\in\mathbb{R}^*, \sum_{i=1}^{n}b_i\neq 0$ and $\sum_{s=0}^{[\beta]}|\nabla^{s}R(y)||y|^{-\beta-s}=o(1)$ a $y$ tends to zero. Here $\nabla^s$ denotes all possible derivatives of order $s$ and $[\beta]$ is the integer part of $\beta$.

However, they have only been able to handle the case $n-2\sigma< \beta< n$ in the flatness hypothesis. This excludes some very interesting functions $K$. In fact, note that an important class of functions which is worth to include in any results of existence for (\ref{1.3}) are the Morse functions ($C^2$ having only non-degenerate critical points). Such functions can be written in the form $(f)_{\beta}$ with $\beta=2$. Since Jin, Li and Xiong require $n-2\sigma< \beta< n$ ($0<\sigma<1$),  their theorems do not apply to this relevant class of functions. Moreover, they require some additional technical assumptions ($K$ antipodally symmetric in Theorem 1.1 and $K\in C^{1, 1}$ positive in Theorem 1.2).

Motivated by the breakthrough papers \cite{jlx1} and \cite{jlx2} and aiming to include a larger class of functions $K$ in the existence results for (\ref{1.3}), we develop in this paper a self-contained approach which enables us to include all the plausible cases ($1<\beta<n$). Our method hinges on a readapted characterization of critical points at infinity techniques of the proof are different for $1< \beta \leq n-2\sigma$ and $n-2\sigma\leq\beta <n$. In this work, we will handle the first case.

The spirit of this approach goes back to the work of Bahri \cite{b1} and Bahri-Coron \cite{BC1}. Nevertheless, the nonlocal properties of the fractional Laplacian involve many additional obstacles and require some novelties in the proof. Note that in \cite{ac1}, the two first authors have given   an existence result for $n=2$,\; $0<\sigma <1$, through an Euler-Hopf type formula. In their paper, they  assumed that $K$ is a Morse function satisfying  the following non-degeneracy condition:\\

\n $$(nd) \qquad  \Delta K(y)\neq 0 \; \mbox{ whenever  } \;\nabla K(y)=0.$$
We point out that the criterium of \cite{ac1} has an equivalent in dimension three (see \cite{ac4}). However,  the method cannot be generalized to higher dimensions $n\geq 4$ under the condition (nd), since   the corresponding
index-Counting-Criteria, when taking into account all the critical points at infinity
is always equal to 1.

Convinced that the non-degeneracy assumption would exclude some interesting class of functions $K$, we opted for the flatness hypothesis used in \cite{jlx1} and \cite{jlx2}. But again, in order to include all plausible cases (both $1<\beta\leq n-2\sigma$ and $n-2\sigma\leq \beta < n$), we need to develop a new line of attack with new ideas. This leads to an interesting new phenomenon; that is the presence of multiple blow-up points. In fact,  looking to the possible formations of blow-up points, it turns  out that the strong interaction of the bubbles in the case where $n-2\sigma<\beta <n$ forces all blow-up points to be single, while in the case where $1<\beta <n-2\sigma$ such an  interaction of two bubbles is negligible with respect to the self interaction, while if $\beta =n-2\sigma$ there is a phenomenon of balance that is the interaction of two bubbles is of the same order with respect to the self interaction.  In order to state our results, we need the following notations and assumptions. Let

\begin{center}$\mathcal{K}=  \{ y \in S^{n}, \nabla K(y) = 0\}$
\end{center}
\begin{center}$\mathcal{K}^{+}=  \{ y \in \mathcal{K}, -  \sum_{k=1}^{n} b_{k}(y)> 0  \}$\end{center}
\begin{center}
$\widetilde{i}(y) = \sharp\Big\{ b_{k}=b_{k}(y),\; 1 \leq k \leq n
\mbox{ such that } b_{k}< 0 \Big \}.$
\end{center}
\begin{center}$\mathcal K_{n-2\sigma}=\big\{y\in \mathcal K,\; \beta= \beta(y)=n-2\sigma\big\}.$\end{center}
For each p-tuple,  $p\geq1$  of distinct points $\tau_{p}:=(
y_{l_{1}},..., y_{l_{p}})$ $\in(\mathcal K _{n-2\sigma})^{p}$, we define a
p$\times$p symmetric matrix $ M(\tau_{p})=(m_{ij})$ by
\begin{equation}\label{4}
    m_{ii} =\frac{n-2\sigma}{n} \widetilde {c}_{1}\frac{- \sum_{k=1}^{n}
   b_{k}\big(y_{l_{i}}\big)}{K\big(y_{l_{i}}\big)^{\frac{n}{2\sigma}
    }}, \hskip 0.5 cm   m_{ij} =  2^{\frac{n-2\sigma}{2}}c_{1}\frac{-G\big( y_{l_{i}}, y_{l_{j}}\big)}
  {\big[K\big( y_{l_{i}}\big)  K(
  y_{l_{j}}\big)\big]^{\frac{n-2\sigma}{4\sigma}}},
\end{equation}
where
\begin{equation}\label{G}
G( y_{l_{i}},  y_{l_{j}})= \dis\frac{1}{(1-\cos d( y_{l_{i}},  y_{l_{j}}))^\frac{n-2\sigma}{2}}
\end{equation}
$$c_{1}=\int_{ \mathbb
R^{n}}\frac{ d\displaystyle
x}{\big(1+\big|x\big|^{2}\big)^{\frac{n+2\sigma}{2}}}  \; \mbox{ and   } \; \widetilde
{c}_{1}= \displaystyle
\int_{\displaystyle \mathbb
R^{n}}\frac{|x_{1}|^{n-2}}{\big(1+|x|^{2}\big)^{n}}dx.$$
Here $x_{1}$ is the first
component of $x$ in some geodesic normal coordinates system. Let $\rho(\tau_{p})$
be the least eigenvalue of $ M\big(\tau_{p}\big)$.\\
 $\big(A_{1}\big)$ \; \;
Assume that $\rho\big (\tau_{p}\big)\neq0$ for each distinct
points $y_{{1}},...,y_{{p}}\in\mathcal K _{n-2\sigma}$.\\
Now, we introduce the following sets:
$$
\mathcal C_{n-2\sigma}^{\infty}:=\big\{\tau_{p}=(y_{l_{1}},...,y_{l_{p}})\in (\mathcal K_{n-2\sigma})^{p}, p\geq1, s.t.\hskip0.2cm y_{i}\neq y_{j }\hskip0.2cm \forall i\neq j, \mbox{ and }
 \rho(\tau_{p})>0\big\},
$$
$$
\mathcal {C}_{<(n-2\sigma)}^{\infty}:=\big\{\tau_{p}=(y_{l_{1}},...,y_{l_{p}})\in ( \mathcal{ K}^{+}\backslash \mathcal{K}_{n-2\sigma})^{p}, p\geq1, s.t.\hskip0.2cm y_{i}\neq y_{j }\hskip0.2cm \forall i\neq j \big\}.
$$
For any $\tau_{p}=(y_{l_{1}},...,y_{l_{p}})\in (\mathcal K)^{p}$, we denote $i(\tau_{p})_{\infty}= p-1+ \dis \sum_{j=1}^{p}\,[\, n\,-\, \widetilde{i}\,(y_{l_{j}})\,]$.

The main result of this paper is the following.

\begin{thm}\label{TH2}
Assume that $K$ satisfies $(A_{1})$ and $(f)_{\beta}$, with $1<\beta\leq n-2\sigma.$ If
$$\dis\sum_{\dis\tau_{p}\in \mathcal C_{n-2\sigma}^{\infty}} (-1)^{\dis i(\tau_{p})_{\infty}} + \dis\sum_{\dis\tau'_{p}\in \mathcal {C}_{<(n-2\sigma)}^{\infty}} (-1)^{\dis i(\tau'_{p})_{\infty}}\qquad \qquad \qquad$$$$\qquad \qquad \qquad - \dis\sum_{\dis(\tau_{p},\tau'_{p})\in \mathcal C_{n-2\sigma}^{\infty}\times {C}_{<(n-2\sigma)}^{\infty}} (-1)^{\dis i(\tau_{p})_{\infty}+i(\tau'_{p})_{\infty}}\neq1, $$
then (\ref{1.3}) has at least one solution.
\end{thm}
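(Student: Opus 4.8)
The plan is to argue by contradiction using the theory of critical points at infinity of Bahri and Bahri--Coron, adapted to the nonlocal operator $P_\sigma$. First I would set up the variational framework: solutions of (\ref{1.3}) correspond, via the stereographic projection identity in the excerpt, to positive critical points of the associated Euler--Lagrange functional
$$J(u)=\frac{\langle P_\sigma u,u\rangle}{\left(\int_{S^n}K|u|^{\frac{2n}{n-2\sigma}}\right)^{\frac{n-2\sigma}{n}}}$$
restricted to the unit sphere $\Sigma^+$ of the natural fractional Sobolev space. Because the exponent is critical, $J$ fails the Palais--Smale condition, and the lack of compactness is described by finite sums of standard bubbles. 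The whole proof rests on classifying the asymptotes of non-convergent flow lines (the critical points at infinity) and reading off the topology they carry.

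Second, and this is the analytic heart, I would carry out the blow-up analysis in the regime $1<\beta\le n-2\sigma$. Working in the neighbourhood at infinity $V(p,\e)$ of $p$-bubble configurations, I would expand $J$ together with its gradient, separating the self-interaction of each bubble (governed, through $(f)_\beta$, by the coefficients $b_k(y)$ and the constant $\tilde c_1$) from the mutual interaction of distinct bubbles (governed by the Green-type function $G$ of (\ref{G}) and the constant $c_1$). The decisive point is the comparison of these two scales: for $\beta<n-2\sigma$ the mutual interaction is of lower order than the self-interaction, so blow-up points concentrate singly at points of $\mathcal K^+$; for $\beta=n-2\sigma$ the two are of the same order, and the competition is encoded exactly by the matrix $M(\tau_p)$ of (\ref{4}), whose least eigenvalue $\rho(\tau_p)$ decides the direction of the flow. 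Using these expansions I would construct an explicit, bounded, suitably transverse pseudo-gradient vector field everywhere except on the configurations listed in $\mathcal C^\infty_{n-2\sigma}$ (where $\rho(\tau_p)>0$), in $\mathcal C^\infty_{<(n-2\sigma)}$, and their mixed combinations; along its flow $J$ strictly decreases, which shows these are the only critical points at infinity. Assumption $(A_1)$, that $\rho(\tau_p)\neq0$, is precisely what guarantees that each balanced configuration is either a genuine critical point at infinity or can be pushed down.

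Third, I would compute the Morse index of each critical point at infinity. The $\tilde i(y)$ descending directions at each blow-up point (coming from the $b_k<0$) together with the $p-1$ directions produced by the positive-definite part of the interaction give the index $i(\tau_p)_\infty=p-1+\sum_{j=1}^{p}[\,n-\tilde i(y_{l_j})\,]$ stated in the excerpt; for a mixed configuration obtained by juxtaposing a $\mathcal C^\infty_{n-2\sigma}$-tuple and a $\mathcal C^\infty_{<(n-2\sigma)}$-tuple, the single extra interaction direction raises the total index by one, so it equals $i(\tau_p)_\infty+i(\tau'_p)_\infty+1$. Finally I would close the argument topologically: if (\ref{1.3}) had no solution then $J$ would have no critical point in $\Sigma^+$, so the level sets of $J$ deformation retract onto the union of the critical points at infinity with their unstable manifolds; since $\Sigma^+$ is contractible its Euler characteristic is $1$, and the Morse relations at infinity give $1=\sum(-1)^{\text{index}}$. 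Expanding this sum over pure $\mathcal C^\infty_{n-2\sigma}$-configurations, pure $\mathcal C^\infty_{<(n-2\sigma)}$-configurations, and mixed ones---the last carrying the extra sign $(-1)$ from the raised index---reproduces exactly the left-hand side of the displayed identity. Hence, if that sum differs from $1$, the non-existence assumption is contradicted and a solution must exist.

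I expect the main obstacle to be the blow-up analysis of the balance case $\beta=n-2\sigma$: establishing expansions of $J$ and $\nabla J$ sharp enough to see that the self-interaction $m_{ii}$ and the nonlocal two-bubble interaction $m_{ij}$ enter at the same order, and then building a pseudo-gradient that remains admissible uniformly as bubbles both concentrate and approach one another. The nonlocal character of $P_\sigma$ makes the interaction estimates and the attendant integral computations (for $c_1$, $\tilde c_1$, and $G$) considerably more delicate than in the classical second-order Nirenberg problem.
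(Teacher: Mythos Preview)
Your proposal is correct and follows essentially the same route as the paper: contradiction argument, pseudo-gradient construction in $V(p,\varepsilon)$ to classify the critical points at infinity as the configurations in $\mathcal P^\infty$, index computation via Corollary~\ref{c4.4} (your observation that a mixed $(\tau_p,\tau'_p)$-configuration carries index $i(\tau_p)_\infty+i(\tau'_p)_\infty+1$, producing the minus sign in the third sum, is exactly what the formula $i(\tau)_\infty=p-1+\sum(n-\tilde i)$ gives), and the Euler-characteristic identity on the contractible $\Sigma^+$ via the deformation lemma. You have also correctly located the main technical difficulty in the balance regime $\beta=n-2\sigma$, where the self- and mutual-interaction terms are of the same order and the matrix $M(\tau_p)$ governs the flow.
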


In part 2, we  will  address  the case $n-2\sigma\leq \beta < n$, following  another approach and  recovering  the  main results  of \cite{jlx1} and \cite{jlx2}.  More precisely, we will prove:

\begin{thm}\label{TH1}
Assume that $K$ satisfies $(A_{1})$ and $(\mathfrak{f})_{\beta}$, with $n-2\sigma\leq \beta < n.$ If\\
$$ \hskip3cm \dis\sum_{\dis y\in \mathcal K^{+}\backslash\mathcal K_{n-2\sigma}} (-1)^{\dis i(y)_{\infty}}+ \dis\sum_{\dis\tau_{p}\in \mathcal C_{n-2\sigma}^{\infty}} (-1)^{\dis i(\tau_{p})_{\infty}}\neq1  \hskip6cm$$
then (\ref{1.3}) has at least one  solution.
\end{thm}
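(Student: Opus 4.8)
The plan is to prove Theorem~\ref{TH1} by the critical-points-at-infinity method of Bahri--Coron, adapted to the fractional setting via the Caffarelli--Silvestre extension. First I would reformulate \eqref{1.3} variationally: using the extension, solutions correspond to critical points of the functional $J(u) = \frac{\int |\nabla U|^2}{(\int_{S^n} K |u|^{2n/(n-2\sigma)})^{(n-2\sigma)/n}}$ on the unit sphere of the relevant trace space, where $U$ is the extension of $u$. The lack of compactness of $J$ is governed by the failure of the Palais--Smale condition, so the key preliminary step is to classify the diverging Palais--Smale sequences. I expect that, exactly as in the classical Nirenberg problem, such sequences decompose as sums of standard bubbles $\sum_{i=1}^p \alpha_i \,\delta_{a_i,\lambda_i}$ concentrating at points $a_i$ that converge to critical points of $K$, with concentration parameters $\lambda_i \to \infty$.

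The heart of the proof is the precise description of the critical points at infinity and the computation of their Morse indices. I would first establish the asymptotic expansion of $J$ on the set of potential critical points at infinity $V(p,\e)$ of $p$-bubble configurations. In the regime $n-2\sigma \le \beta < n$ treated here, the flatness condition $(\mathfrak{f})_\beta$ forces the self-interaction term, controlled by the local shape of $K$ near each critical point $y_{l_j}$ (encoded by the $b_k(y_{l_j})$), to dominate or balance the mutual bubble interaction $G(y_{l_i},y_{l_j})$. This dichotomy is precisely why two families appear in the final count: isolated single blow-ups at points of $\mathcal{K}^+ \setminus \mathcal{K}_{n-2\sigma}$ (where $\beta < n-2\sigma$ fails so interaction is dominated and only single points survive) contribute the first sum, while genuine multiple blow-ups are possible only at balanced points of $\mathcal{K}_{n-2\sigma}$, where the positivity condition $\rho(\tau_p)>0$ on the interaction matrix $M(\tau_p)$ from \eqref{4} selects the admissible configurations, giving the second sum. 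For each such critical point at infinity I would compute its index as $i(\tau_p)_\infty = p-1 + \sum_j [\,n - \widetilde{i}(y_{l_j})\,]$, where the $p-1$ accounts for the relative concentration directions and $n - \widetilde{i}(y_{l_j})$ for the stable directions in the $b_k$ variables; the nondegeneracy assumption $(A_1)$ guarantees these indices are well-defined by ruling out $\rho(\tau_p)=0$.

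With the full list of critical points at infinity and their indices in hand, I would run the topological argument. Assuming \eqref{1.3} has no genuine solution, the level sets of $J$ retract onto the union of the critical points at infinity, so the Euler--Poincar\'e characteristic of the ambient contractible space (which equals $1$) must equal the alternating sum $\sum (-1)^{i(\cdot)_\infty}$ over all critical points at infinity. This yields the identity
\begin{equation*}
\sum_{y\in \mathcal K^{+}\backslash\mathcal K_{n-2\sigma}} (-1)^{i(y)_{\infty}} + \sum_{\tau_{p}\in \mathcal C_{n-2\sigma}^{\infty}} (-1)^{i(\tau_{p})_{\infty}} = 1.
\end{equation*}
The contrapositive is exactly the hypothesis of the theorem, so if the displayed sum is $\neq 1$ a genuine critical point must exist. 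The main obstacle, and where the fractional nature genuinely intervenes, is the derivation of the gradient and energy estimates underlying the Morse-theoretic reduction: the nonlocal operator $P_\sigma$ obstructs the purely local computations used in the classical case, so the bubble interaction estimates must be carried out on the extended half-space $\R^{n+1}_+$ using the extension, and the self-interaction expansion must be controlled uniformly in the flatness parameter $\beta$ near the critical threshold $n-2\sigma$. Establishing these expansions with sufficient precision to pin down the sign of each contribution is the crux, and the balance phenomenon at $\beta = n-2\sigma$ makes the $\mathcal{K}_{n-2\sigma}$ analysis the most delicate part of the argument.
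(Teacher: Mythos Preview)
The paper does not actually prove Theorem~\ref{TH1}. It is only \emph{announced} in the introduction, with the proof explicitly deferred to a sequel (``In part 2, we will address the case $n-2\sigma\le\beta<n$\dots''). The present paper carries out the program only for Theorem~\ref{TH2} (the range $1<\beta\le n-2\sigma$): the pseudo-gradient construction in Section~3, Corollary~\ref{c4.4} characterizing the critical points at infinity, and the Euler--Poincar\'e argument in Section~4. So there is no proof here to compare your proposal against.

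That said, your outline is exactly the template the paper uses for Theorem~\ref{TH2} and signals for Part~2: reformulate via the extension, build a pseudo-gradient on $V(p,\e)$ whose decreasing flow-lines satisfy Palais--Smale except near the admissible configurations, read off the critical points at infinity and their indices, then run the contradiction argument on the Euler characteristic of $\Sigma^+$. The two-family structure you identify (single points in $\mathcal K^+\setminus\mathcal K_{n-2\sigma}$ versus tuples in $\mathcal C^\infty_{n-2\sigma}$) is what the introduction predicts for this regime.

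One point of phrasing to fix: for points with $\beta(y)>n-2\sigma$ the bubble interaction $\e_{ij}$ \emph{dominates} the self-interaction $\lambda_i^{-\beta}$, and this strong interaction is what rules out multi-bubble configurations and forces single blow-up (the paper says this explicitly in the introduction). Your parenthetical ``interaction is dominated'' reads the wrong way round; the mechanism is the opposite of the $1<\beta<n-2\sigma$ case treated in this paper, where $\e_{ij}=o(\lambda_i^{-\beta_i})+o(\lambda_j^{-\beta_j})$ (estimate~\eqref{11}) and hence multi-bubbling is essentially unconstrained.
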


\smallskip We organize the remainder of our  paper as follows. The second
section is devoted to recall some preliminary results ralated to the Caffarelli-Silvestre method  (see \cite{cas1}). In section three,    we characterize   the
critical points at infinity  of the associated variational
problem.  In the fourth   section,   we give  the proof of the  main results. The characterization of critical points at infinity  requires some technical results which for the convenience
of the reader, are given in the appendix.

\section{Preliminary results}

In this section, we recall some preliminary results ralated to the Caffarelli-Silvestre extension   (see \cite{cas1}), which provides a variational structure to the fractional problem.

\n We say that  $u\in H^{\sigma}(S^n)$ is a solution of (\ref{1.3}) if the identity
\begin{equation}\label{}
 \int_{S^n}  P_{\sigma}u \varphi dx= c(n, \sigma) \int_{S^n} K u^{\frac{n+2\sigma}{n-2\sigma}} \varphi dx,
\end{equation}
holds for all $\varphi\in H^{\sigma}(S^n)$, where
$H^\sigma(S^n)=\{u\in L^2(S^n), \|u\|^2_{H^\sigma(S^n)}\in  L^2(S^n)\},$
equipped with the norm,
\begin{equation}\label{}
\|u\|_{H^\sigma(S^n)}= \Big(\int_{S^n} P_{\sigma}u  u \Big)^{1/2}.
\end{equation}

\n We recall that the set of smooth functions $C^\infty(S^n)$ is dense in $H^{\sigma}(S^n)$. Observe also that for  $u\in H^{\sigma}(S^n)$, we have $u^{\frac{n+2\sigma}{n-2\sigma}}\in L^\frac{2n}{n+2\sigma}(S^n)\hookrightarrow H^{-\sigma}(S^n)$.

We associate to problem (\ref{1.3}), the functional
\begin{equation}\label{}
    I(u)=\frac{1}{2}\int_{S^n}u P_{\sigma}u -\frac{n-2\sigma}{2n}\int_{S^n} K u^\frac{2n}{n-2\sigma},
\end{equation}
defined in $H^{\sigma}(S^n)$.

Motivated by the work of Caffarelli and Silvestre \cite{cas1}, several authors have considered an equivalent
definition of the operator $P_\sigma$  by
means of an auxiliary variable, see \cite{cas1}, (see also \cite{bcps1}, \cite{cabs1}, \cite{cabt1}, \cite{cdds1} and \cite{sto1}). In fact, we handle  problem (\ref{1.3}), through a localization method introduced by Caffarelli
and Silvestre  on the Euclidean space $\mathbb{R}^n$, through which
(\ref{1.3}) is connected to a degenerate elliptic differential equation in one dimension higher by a Dirichlet to Neumann map. This
provides a good variational structure to the problem.  By studying
this problem with classical local techniques, we establish existence of
positive solutions. Here the Sobolev trace embedding
comes into play, and its critical exponent $2^*=\frac{2n}{n-2\sigma}$.

\n Namely, let $D_n= S^n\times[0, \infty)$. Given $u\in H^{\sigma}(S^n)$,  we define its harmonic extension $U=  E_\sigma(u)$ to $D_n$ as the solution to the problem
\begin{equation}\label{eq:4}
\begin{cases}
     - div (t^{1-2\sigma}\nabla U )&=  0 \text{ in
  } D_n\\
U &= u
     \text{ on  }  S^{n}\times\{t=0\}.
\end{cases}
\end{equation}
The extension belongs to the space $H^1(D_n)$ defined as the completion of $C^\infty(D_n)$ with the norm
\begin{equation}\label{}
    \|U\|_{H^1(D_n)}= \Big(\int_{D_n}t^{1-2\sigma}|\nabla U|^2 dxdt\Big)^{1/2}.
\end{equation}
Observe that this extension is an isometry in the sense that
\begin{equation}\label{}
    \|E_\sigma(u)\|_{H^1(D_n)}= \|u\|_{H^{\sigma}(S^n)}, \qquad \forall u\in H^{\sigma}(S^n).
\end{equation}
Moreover, for any $\varphi\in H^1(D_n)$, we have the following trace inequality
\begin{equation}\label{}
  \|\varphi\|_{H^1(D_n)} \geq \|\varphi(., 0)\|_{H^{\sigma}(S^n)}.
\end{equation}
\n The relevance of the extension function $U=E_\sigma(u)$ is that it is related to the fractional Laplacian of the
original function $u$ through the formula

\begin{equation}\label{}
    \displaystyle  \dis -\lim_{t\rightarrow 0^+}t^{1-2\sigma}\frac{\partial U}{ \partial t}(x, t)= P_\sigma u(x).
\end{equation}

\n Thus, we can reformulate (\ref{1.3}) to the following

\begin{equation}\label{eq:5}
\begin{cases}
     div (t^{1-2\sigma}\nabla U(x, t))&=0 \qquad  \qquad \qquad \text{ and } \, U>0 \text{ in
  } D_n\\
     \dis -\lim_{t\rightarrow 0^+}t^{1-2\sigma}\frac{\partial U}{ \partial t}(x, t)&=
      K U^{\frac{n+2\sigma}{n-2\sigma}}(x, 0)
     \qquad  \;\quad\text{ on  }  S^{n}\times\{0\}.
\end{cases}
\end{equation}

\n The functional associated to (\ref{eq:5}), is given by

\begin{equation}\label{}
    I_1(U)=\frac{1}{2}\int_{D_n}t^{1-2\sigma}|\nabla U|^2dxdt -\frac{n-2\sigma}{2n}\int_{S^n} K U^\frac{2n}{n-2\sigma}dx,
\end{equation}
defined in $H^1(D_n)$.

\n Note  that critical points of $I_1$ in  $H^1(D_n)$ correspond to critical points of $I$ in  $H^\sigma(S^n)$. That is, if $U$ satisfies (\ref{eq:5}), then the trace
$u$ on $S^n\times{0}$ of the function $U$ will be a solution of problem (\ref{1.3}). Let also define the functional
\begin{equation}\label{}
        J(U)=\frac{\|U\|^2_{H^1(D_n)}}{\bigg(\dis\int_{S^n} K U^\frac{2n}{n-2\sigma}dx\bigg)^{\frac{n-2\sigma}{n}}},
\end{equation}
defined on  $\Sigma$ the unit sphere of
 $H^{1}(D_n)$. We set, $\Sigma^+ = \{U \in \Sigma/ \,
 U \geq 0 \}$. Problem (\ref{1.3}) will be reduced to  finding the critical points of $J$ under  the
constraint $U\in \Sigma^+$. The exponent $\frac{2n}{n-2\sigma}$ is critical for the Sobolev
trace embedding $H^{1}(D_n) \rightarrow L^q(
\mathbb{S}^{n})$. This embedding is continuous and not compact. The
functional $J$ does not satisfy the Palais-Smale condition, which leads to the failure
of the standard critical point theory. This  means that there exists a
 sequence $(u_n)$ belonging to the constraint such that  $J(u_n)$ is bounded, its gradient goes to zero
 and  does  not converge.  The analysis of sequences failing PS
condition can be analyzed along the ideas introduced in
 \cite{BC1} and \cite{S3}.

 In order to describe such a
 characterization in our case, we need to introduce some notations.

\n For $a\in \partial\mathbb{R}^{n+1}_+ $ and $\lambda
>0$, define the function:
$$ \tilde{\delta}_{a, \lambda}(x) =
\bar{c}\dis\frac{\lambda^\frac{n-2\sigma}{2}}{\Big((1+ \lambda
 x_{n+1})^2
+ \lambda^2|x'-a'|^2\Big)^\frac{n-2\sigma}{2}}$$ \n where $x \in
\mathbb{R}^{n+1}_+$, and $\bar{c}$ is chosen such that $\tilde{\delta}_{a,
\lambda}$ satisfies the following equation,

$$\begin{cases}
    \; \; \; \Delta U &= 0 \quad\text{and } u>0 \text{ in  }
\mathbb{R}^{n+1}_+\\
     \displaystyle-\frac{\partial U}{\partial x_{n+1}} &=
u^{\frac{n+2\sigma}{n-2\sigma}}
     \!\!\qquad\qquad\text{ on  } \partial \mathbb{R}^{n+1}_+.
\end{cases}$$

\n Set $$ \delta_{a, \lambda} = i^{-1}(\tilde{\delta}_{a,
\lambda}).$$
where $i$ is an isometry from $H^1(D^n)$ to $ D^{1, 2}(\mathbb{R}^{n+1}_+)$.\\ In the sequel, we will identify $\delta_{a, \lambda}$ and its composition with $i$. We will also identify the function  $u$  and its   extension $U$. These facts will be assumed as understood in the sequel.

\n For $\varepsilon>0$, $p\in \mathbb{N}^*$, we define

$$V(p, \varepsilon)= \begin{cases}
u\in \Sigma \text{ s. t }\exists a_1, \ldots, a_p \in {S}^{n}, \exists  \alpha_1, \ldots, \alpha_p >0
 \text{ and }\\
 \exists \lambda_1, \ldots, \lambda_p > \varepsilon^{-1} \text{
with  } \Big\| u - \dis\sum_{i=1}^p\alpha_i\delta_{a_i, \lambda_i}
 \Big\|< \varepsilon, \;\; \varepsilon_{ij} < \varepsilon\; \forall
 \;i\neq j, \\
    \text{ and }  \Big|
   J(u)^\frac{n}{n-2\sigma}\alpha_i^\frac{2}{n-2\sigma} K(a_i) - 1
      \Big| < \varepsilon \;\; \forall i, j=1, \ldots , p,
      \end{cases}$$
where $$\varepsilon_{ij} =
\Bigg(\dis\frac{\lambda_i}{\lambda_j} +
\frac{\lambda_j}{\lambda_i} + \lambda_i\lambda_j |a_i-
a_j|^2\Bigg)^\frac{2\sigma-n}{2}.$$

\section{Characterization of the critical points at infinity for $1<\beta\leq n-2\sigma$}

This section is devoted to the characterization of the critical points at infinity in $V(p,\varepsilon), p\geq1$, under $\beta$-flatness condition with $1<\beta\leq n-2\sigma$. This characterization is obtained through the construction of a suitable pseudo-gradient at infinity for which the Palais-Smale condition is satisfied along the decreasing flow-lines as long as these flow-lines do not enter in the neighborhood of finite number of critical points $y_{i}, i=1,...,p$
of $K$ such that $$(y_{1},...,y_{p})\in\mathcal P^{\infty}:= {C}_{<(n-2\sigma)}^{\infty}\cup  C^{\infty}_{n-2\sigma} \cup {C}_{<(n-2\sigma)}^{\infty}\times C^{\infty}_{n-2\sigma}.$$ More precisely we have:

\begin{thm}\label{1}
Assume that $K$ satisfies $(A_{1})$ and $(f)_{\beta}$, $1 <\beta \leq n-2\sigma$. \\Let
$\beta:=\max\{ \beta(y)/ y \in \mathcal K\} $. For $p \geq 1$,
there exists a pseudo-gradient $W$ in $V(p,
\varepsilon)$ so that the following holds.\\
There exist a constant $c > 0$ independent of $u= \dis
\sum_{i=1}^{p}\alpha_{i} \delta_{i} \in V(p,
\varepsilon)$ such that
$$ (i)\Big\langle  \partial J(u), W(u) \Big\rangle \leq -c \biggr( \sum_{i=1}^{p}\dis
  \frac{1}{\lambda_{i}^{\beta}}+ \sum_{i=1}^{p} \dis \frac{\mid\nabla
   K(a_{i})\mid}{\lambda_{i}} +\dis \sum_{j \neq i}
   \varepsilon_{ij} \biggl).\hskip4.7cm$$
 $$(ii)\Big\langle  \partial J(u+\overline{v}), W(u)+ \dis
 \frac{\partial\overline{v}}{\partial(\alpha_{i},a_{i},\lambda_{i})}(W(u))
  \Big\rangle \leq -c \biggr( \sum_{i=1}^{p}\dis
  \frac{1}{\lambda_{i}^{\beta}}+ \sum_{i=1}^{p} \dis \frac{\mid\nabla
  K(a_{i})\mid}{\lambda_{i}} +\dis \sum_{j \neq i}
   \varepsilon_{ij} \biggl).  $$
Furthermore $| W|$ is bounded and the only case
where the maximum of the $\lambda_{i}$'s is not bounded is when
$a_{i}\in B(y_{l_{i}}, \rho)$ with $y_{l_{i}}\in \mathcal{K},$
$\forall i=1,...,p$, $(y_{l_{1}},...,y_{l_{p}})\in \mathcal P^{\infty}$.
\end{thm}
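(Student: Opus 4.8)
The plan is to construct $W$ by patching together explicit pseudo-gradients on pieces of $V(p,\varepsilon)$, in the spirit of \cite{BC1}. The first step is to record the asymptotic expansion of $\partial J(u)$ along the three natural families of tangent vectors to the manifold of bubbles, namely $\lambda_i\,\partial\delta_i/\partial\lambda_i$, $\lambda_i^{-1}\partial\delta_i/\partial a_i$ and $\partial\delta_i/\partial\alpha_i$, where $\delta_i=\delta_{a_i,\lambda_i}$. Under $(f)_\beta$ the self-interaction of the $i$-th bubble contributes, after integrating $K\delta_i^{2n/(n-2\sigma)}$ against the flatness expansion (\ref{hf}), a term of order $\big(-\sum_k b_k(a_i)\big)\lambda_i^{-\beta}$ to $\langle\partial J(u),\lambda_i\,\partial\delta_i/\partial\lambda_i\rangle$; the gradient of $K$ contributes a term of order $|\nabla K(a_i)|\lambda_i^{-1}$ to the $a_i$-derivative; and the mutual interaction of two bubbles contributes $\varepsilon_{ij}$ together with its derivatives. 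These are exactly the technical estimates the paper defers to the appendix, and I would treat them here as known.

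Next I would cover $V(p,\varepsilon)$ by overlapping regions according to which term on the right-hand side of $(i)$ dominates, and on each region exhibit a decreasing direction. (a) Where some $a_i$ stays at bounded distance from $\mathcal K$, so $|\nabla K(a_i)|$ is bounded below, a motion of $a_i$ proportional to $\lambda_i^{-1}\nabla K(a_i)$ produces the decrease carried by the $|\nabla K(a_i)|/\lambda_i$ term. (b) Where all $a_i$ lie in small balls $B(y_{l_i},\rho)$ with $y_{l_i}\in\mathcal K$, the sign of $-\sum_k b_k(y_{l_i})$ decides the $\lambda_i$-move: if $y_{l_i}\notin\mathcal K^+$ one decreases $\lambda_i$, which forces the $\lambda_i^{-\beta}$-decrease and simultaneously keeps $\lambda_i$ bounded, whereas if $y_{l_i}\in\mathcal K^+$ increasing $\lambda_i$ lowers the self-energy. (c) The delicate point is the bookkeeping of $\varepsilon_{ij}$: when $\beta(y_{l_i})<n-2\sigma$ (the block $\mathcal C^\infty_{<(n-2\sigma)}$) the interaction $\varepsilon_{ij}$ is negligible against the self-interaction $\lambda_i^{-\beta(y_{l_i})}$, so the bubbles decouple and increasing each such $\lambda_i$ in $\mathcal K^+$ independently dominates, the interactions being absorbed as lower order.

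The genuinely new difficulty, and the step I expect to be the main obstacle, is the balanced case $\beta(y_{l_i})=n-2\sigma$, where $\lambda_i^{-\beta}$ and $\varepsilon_{ij}$ are of the same order and a move in a single $\lambda_i$ cannot be decoupled from the interaction. Setting $\Lambda_i=\lambda_i^{-(n-2\sigma)/2}$, the part of the expansion of $J$ responsible for the $\lambda$-behaviour is the quadratic form $\langle M(\tau_p)\Lambda,\Lambda\rangle$ with $\Lambda=(\Lambda_1,\dots,\Lambda_p)$ and $M(\tau_p)$ as in (\ref{4}), whose diagonal encodes the self-interaction $-\sum_k b_k$ and whose off-diagonal encodes $G(y_{l_i},y_{l_j})$. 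I would then use its eigenstructure: if $\rho(\tau_p)<0$, the eigenvector of the least eigenvalue gives a direction in $\Lambda$-space decreasing the form, realized by a combined motion of the $\lambda_i$'s that makes $J$ strictly decrease and yields $(i)$ with the $\lambda_i$ bounded; if $\rho(\tau_p)>0$ the form is coercive, no decreasing $\lambda$-direction exists, and $\max_i\lambda_i$ is allowed to run to infinity — precisely the configuration $\tau_p\in\mathcal C^\infty_{n-2\sigma}$. Assumption $(A_1)$ guarantees this dichotomy is clean, and the mixed configurations $\mathcal C^\infty_{n-2\sigma}\times\mathcal C^\infty_{<(n-2\sigma)}$ are treated block by block by combining (b)--(c).

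Finally I would glue the regional fields by a partition of unity subordinate to the covering and normalize, obtaining a convex combination that satisfies $(i)$; since each building block is a bounded combination of the unit tangent vectors, $|W|$ is automatically bounded. For $(ii)$, the term $\overline v$ denotes the projection of $u$ onto the subspace orthogonal to the $\delta_i$ and their derivatives; the standard $v$-estimate (deferred to the appendix) shows that $\|\overline v\|$ is small and that both $\langle\partial J(u+\overline v),W(u)\rangle-\langle\partial J(u),W(u)\rangle$ and the extra contribution of $\partial\overline v/\partial(\alpha_i,a_i,\lambda_i)(W(u))$ are of strictly lower order than the right-hand side of $(i)$, so that $(i)$ upgrades to $(ii)$. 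The concluding statement is then read off the construction: along $-W$ the maximal concentration can fail to be pushed back only in the regions where no $\lambda_i$ is being decreased, which by (a)--(c) forces every $a_i\in B(y_{l_i},\rho)$ with $y_{l_i}\in\mathcal K$, all such $y_{l_i}\in\mathcal K^+$ on the $\beta<n-2\sigma$ block and $\rho(\tau_p)>0$ on the $\beta=n-2\sigma$ block, i.e.\ exactly when $(y_{l_1},\dots,y_{l_p})\in\mathcal P^\infty$.
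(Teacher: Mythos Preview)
Your strategy matches the paper's: both reduce to Propositions \ref{3.2} and \ref{3.1} (pseudo-gradients on the pure blocks $V_1(p,\varepsilon)$ and $V_2(p,\varepsilon)$), then patch in the mixed configurations and the far-from-$\mathcal K$ case, and your reading of the balanced situation $\beta=n-2\sigma$ via the quadratic form $\langle M(\tau_p)\Lambda,\Lambda\rangle$ is exactly how the paper handles $V_2^1$ and $V_2^2$. The passage from $(i)$ to $(ii)$ via Proposition \ref{p24} is also what the paper does.

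There is, however, a genuine gap in your step (c). Your decoupling claim --- that for $\beta(y_{l_i})<n-2\sigma$ the interaction $\varepsilon_{ij}$ is negligible against $\lambda_i^{-\beta}$ --- rests on $|a_i-a_j|$ being bounded below, so that $\varepsilon_{ij}=O\bigl((\lambda_i\lambda_j)^{-(n-2\sigma)/2}\bigr)$. But nothing in $V(p,\varepsilon)$ prevents several bubbles from concentrating at the \emph{same} critical point $y_{l_i}=y_{l_j}$; in that collision regime $|a_i-a_j|<2\rho$ and $\varepsilon_{ij}$ can dominate every self-interaction, so independently increasing each $\lambda_i$ does not yield the required inequality (and would in fact let $\max\lambda_i$ blow up at a configuration that is \emph{not} in $\mathcal P^\infty$). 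The paper isolates this as the regions $V_1^4$ and $V_2^5$: for each cluster $B_k=\{j:a_j\in B(y_{l_k},\rho)\}$ with $\sharp B_k\ge2$ one \emph{decreases} the $\lambda_j$'s at different speeds through a weight $\overline\chi(\lambda_j)=\sum_{i\in B_k,\,i\ne j}\chi(\lambda_j/\lambda_i)$, arranged so that whenever $\lambda_i\ll\lambda_j$ one has $\overline\chi(\lambda_j)-\overline\chi(\lambda_i)\ge1$ and hence
\[
\overline\chi(\lambda_j)\,\lambda_j\frac{\partial\varepsilon_{ij}}{\partial\lambda_j}+\overline\chi(\lambda_i)\,\lambda_i\frac{\partial\varepsilon_{ij}}{\partial\lambda_i}\le -c\,\varepsilon_{ij}.
\]
This is the mechanism that kills the large mutual interaction while keeping $\max_i\lambda_i$ bounded, and it is not covered by your (b)--(c).

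A smaller omission: even when $a_i\in B(y_{l_i},\rho)$ the paper needs a tangential field $X_i$ (formula \eqref{ref2}) in the annular zone $\lambda_i|a_i-y_{l_i}|\ge\delta/2$ (the $L_2$ indices), both to produce the $|\nabla K(a_i)|/\lambda_i$ contribution and to absorb the error $|(a_i-y_{l_i})_{k_i}|^{\beta-2}\lambda_i^{-2}$ coming from the $Z_i$-expansion in Lemma \ref{19}. Your sketch folds all $a_i$-motion into case (a), where $a_i$ is far from $\mathcal K$; the paper uses $X_i$ also inside the ball $B(y_{l_i},\rho)$, and this recentering is what makes the estimates in $V_1^3$, $V_1^4$, $V_2^4$, $V_2^5$ close.
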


In order to prove  Theorem \ref{1}, we  state the following two
results which deal with two specific cases of Theorem \ref{1}.
Let,
$$V_{1}(p,\varepsilon)= \Big\{ u= \dis \sum_{i=1}^{p} \alpha_{i} \delta_{i} \in V(p, \varepsilon) \mbox{ s.t },
a_{i} \in B(y_{l_{i}}, \rho), y_{l_{i}}\in \mathcal{K}\setminus \mathcal{K}_{n-2\sigma} \; \forall i=1,...,p \Big\}.$$
$$ V_{2}(p,\varepsilon)=\Big\{ u= \dis \sum_{i=1}^{p}
\alpha_{i} \delta_{i} \in V(p, \varepsilon) \mbox{ s.t }, a_{i} \in
B(y_{l_{i}}, \rho),  y_{l_{i}}\in \mathcal{K}_{n-2\sigma} , \; \forall
i=1,...,p \Big\}.$$
We then have:

\begin{prop}\label{3.2}
For $p\geq 1$, there exist a pseudo-gradient $W_{1}$ in
$V_{1}(p,\varepsilon)$ such that the  following holds:\\
Theres exist $c> 0$ independent of $u=\dis \sum_{i=1}^{p}
\alpha_{i} \delta_{i} \in V_{1}(p, \varepsilon)$ such that
$$\Big \langle  \partial J(u), W_{1}(u)\Big\rangle \leq  - c  \biggl(\dis \sum_{i=1}^{p} \dis\frac{1}{\lambda_{i}^{\beta}}
+ \dis \sum_{i\neq j} \varepsilon_{i j}+ \dis \sum_{i=1}^{p}
\dis\frac{|\nabla K(a_{i})|}{\lambda_{i}}    \biggr). $$
Furthermore $|W_{1}|$ is bounded in $H^1(D^n)$ and the only case
where the maximum of the $\lambda_{i}$'s is not bounded is when
$a_{i}\in B\,(y_{l_{i}}\,,\ \rho)$ with $y_{l_{i}}\in \mathcal{K}^{+}$,
$\forall i=1,...,p$,  with
$(y_{l_{1}},...,y_{l_{p}}) \in \mathcal{C}^{\infty}_{<n-2\sigma}$.
\end{prop}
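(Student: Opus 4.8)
The plan is to construct the pseudo-gradient $W_1$ on $V_1(p,\varepsilon)$ by carefully expanding the gradient $\partial J(u)$ along the natural variations of the concentration parameters $\lambda_i$, the concentration points $a_i$, and the masses $\alpha_i$. Since all the $y_{l_i}$ here lie in $\mathcal K\setminus\mathcal K_{n-2\sigma}$, we are in the regime $1<\beta(y_{l_i})<n-2\sigma$, where the interaction terms $\varepsilon_{ij}$ between distinct bubbles are negligible compared with the self-interaction term $\lambda_i^{-\beta}$ coming from the flatness of $K$. First I would record the asymptotic expansion of $J(u)$ and of $\langle \partial J(u),\lambda_i\frac{\partial\delta_i}{\partial\lambda_i}\rangle$, $\langle\partial J(u),\frac{1}{\lambda_i}\frac{\partial\delta_i}{\partial a_i}\rangle$ in $V_1(p,\varepsilon)$, using the $\beta$-flatness condition $(f)_\beta$ to evaluate the contribution of $\int K U^{2n/(n-2\sigma)}$. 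The leading term in the $\lambda_i$-derivative will be of the form $-c\,\frac{-\sum_k b_k(y_{l_i})}{\lambda_i^{\beta}}$ up to the interaction terms $\sum_{j\ne i}\varepsilon_{ij}$ and the gradient term $|\nabla K(a_i)|/\lambda_i$.

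The construction itself I would organize by cases according to the sign information encoded in $\mathcal K^+$ and $\widetilde i(y)$. The key mechanism is this: if some index $i$ has $y_{l_i}\in\mathcal K^+$ (so $-\sum_k b_k(y_{l_i})>0$), then increasing $\lambda_i$ decreases $J$, and I would include a vector field component $+\lambda_i\frac{\partial\delta_i}{\partial\lambda_i}$ (building a flow that pushes $\lambda_i\to\infty$); this is exactly the situation producing a genuine critical point at infinity, consistent with the concluding statement that unbounded $\lambda_i$ forces $y_{l_i}\in\mathcal K^+$ with $(y_{l_1},\dots,y_{l_p})\in\mathcal C^\infty_{<(n-2\sigma)}$. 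If instead $-\sum_k b_k(y_{l_i})<0$, decreasing $\lambda_i$ decreases $J$, so I would include $-\lambda_i\frac{\partial\delta_i}{\partial\lambda_i}$, which drives $\lambda_i$ back into a bounded region and prevents concentration there. In both cases, to control the gradient term $|\nabla K(a_i)|/\lambda_i$, I would add a displacement of $a_i$ along $-\nabla K(a_i)$ whenever $a_i$ is not yet sufficiently close to $y_{l_i}$; the flatness expansion gives $\partial K(a_i)\sim \sum_k \beta b_k |(a_i-y_{l_i})_k|^{\beta-1}\mathrm{sign}$, so moving $a_i$ toward $y_{l_i}$ both decreases $J$ and dominates the error. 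I would then take $W_1$ to be a convex combination (with uniformly bounded coefficients) of these elementary fields over all $i$, so that $|W_1|$ is bounded in $H^1(D^n)$.

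The verification of the two inequalities then reduces to checking that, after forming the combination, the negative leading terms add up to at least $c\big(\sum_i \lambda_i^{-\beta}+\sum_i |\nabla K(a_i)|/\lambda_i+\sum_{i\ne j}\varepsilon_{ij}\big)$ while all error and interaction terms are absorbed. Because $\beta<n-2\sigma$ here, each $\varepsilon_{ij}=O\big((\lambda_i\lambda_j|a_i-a_j|^2)^{(2\sigma-n)/2}\big)$ is of strictly higher order than $\lambda_i^{-\beta}$ (the self-interaction genuinely dominates the two-bubble interaction), so the cross terms never obstruct the estimate and the $\varepsilon_{ij}$ on the right side can be generated simply as a byproduct of the $\lambda_i$-variations. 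The main obstacle, and the step requiring the most care, is the precise asymptotic expansion of $\langle\partial J(u),\lambda_i\frac{\partial\delta_i}{\partial\lambda_i}\rangle$ in the presence of several concentrating bubbles: one must separate the self-interaction coefficient $\widetilde c_1\frac{-\sum_k b_k(y_{l_i})}{K(y_{l_i})^{n/2\sigma}}$ from the genuinely interactive pieces and confirm, via the nonlocal nature of $P_\sigma$ and the Caffarelli–Silvestre extension, that the mutual interaction of two bubbles contributes only at order $\varepsilon_{ij}$ and not at the self-interaction order. These expansions rely on the technical estimates deferred to the appendix, which I would invoke here rather than reprove.
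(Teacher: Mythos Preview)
Your outline captures the right intuition for the ``generic'' situation where the concentration points $a_i$ sit near pairwise distinct critical points $y_{l_i}$, but it misses a case that is explicitly allowed in $V_1(p,\varepsilon)$ and that the paper treats with some care: the collision case $y_{l_i}=y_{l_j}$ for some $i\neq j$. Your key claim that ``each $\varepsilon_{ij}=O\big((\lambda_i\lambda_j|a_i-a_j|^2)^{(2\sigma-n)/2}\big)$ is of strictly higher order than $\lambda_i^{-\beta}$'' relies on a lower bound $|a_i-a_j|\geq\rho>0$, which holds only when the $y_{l_i}$ are distinct. When two bubbles sit near the same critical point, $|a_i-a_j|$ can be arbitrarily small and $\varepsilon_{ij}$ is no longer dominated by the self-interaction $\lambda_i^{-\beta}$; your sign-of-$\sum_k b_k$ argument then produces interaction errors you cannot absorb. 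The paper handles this region ($V_1^4$) by decreasing the $\lambda_j$'s at different speeds via a cutoff $\overline\chi(\lambda_j)=\sum_{i\neq j}\chi(\lambda_j/\lambda_i)$, which forces $\lambda_j\partial\varepsilon_{ij}/\partial\lambda_j\leq -c\,\varepsilon_{ij}$ for the colliding indices and generates a genuine negative $-c\sum\varepsilon_{ij}$ term that your construction does not produce.

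A secondary point: the paper does not move $a_i$ ``along $-\nabla K(a_i)$'' directly but uses the specific field $X_i$ (built from the integrals $\int b_k|x_k+\lambda_i(a_i)_k|^\beta x_k(1+|x|^2)^{-(n+1)}dx$), and splits indices into $L_1=\{\lambda_i|a_i|\leq M_1\}$ and $L_2=\{\lambda_i|a_i|>M_1\}$. This is what allows, via Lemma~\ref{21}, the estimate $\langle\partial J(u),X_i\rangle\leq -c(\lambda_i^{-\beta}+|(a_i-y_{l_i})_{k_i}|^{\beta-1}/\lambda_i)$ for $i\in L_2$, which is needed to absorb the residual term $|(a_j-y_{l_j})_{k_j}|^{\beta-2}/\lambda_j^2$ arising from the $Z_j$-variation. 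Your plain $-\nabla K(a_i)$ displacement does not obviously yield this cancellation, and without it the error balance in the collision and far-from-critical-point regions breaks down.
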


\begin{prop}\label{3.1}
For $p  \geq 1$ there exists a pseudo-gradient $W_{2}$ in
$V_{2}(p,\varepsilon)$ such that $\forall  u = \dis
\sum_{i=1}^{p}\alpha_{i} \delta_{i} \in V_{2}(p, \varepsilon) $,
we have $$\Big \langle  \partial J(u), W_{2}(u)\Big\rangle \leq -
c \biggl(\dis \sum_{i=1}^{p} \dis\frac{1}{\lambda_{i}^{n-2}} +
\dis \sum_{i\neq j} \varepsilon_{i j}+ \dis \sum_{i=1}^{p}
\dis\frac{|\nabla K(a_{i})|}{\lambda_{i}}    \biggr). $$
Where $c$ is a positive constant independent of $u$. Furthermore,
we have $|W_{2} |$ is bounded and the only case where the maximum
of $\lambda_{i}'$s is not bounded is when $a_{i}\in B(y_{l_{i}},
\rho)\;, y_{l_{i}}\in \mathcal{K}^{+},\;\; \forall \;i=1,...,p$, with
$(y_{l_{1}},...,y_{l_{p}}) \in \mathcal{C}^{\infty}_{n-2\sigma}$
\end{prop}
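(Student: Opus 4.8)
The plan is to construct the pseudo-gradient $W_2$ on $V_2(p,\varepsilon)$ by exploiting the explicit form of the interaction matrix $M(\tau_p)$ defined in (\ref{4}), which governs the leading-order expansion of $\langle\partial J(u),\cdot\rangle$ in the regime $\beta=n-2\sigma$. The first step is to write down the asymptotic expansion of $J(u)$ and its gradient for $u=\sum_{i=1}^p\alpha_i\delta_i\in V_2(p,\varepsilon)$, separating the contributions into: the self-interaction terms producing $\lambda_i^{-(n-2\sigma)}=\lambda_i^{-\beta}$ via the flatness condition $(f)_\beta$ (with coefficient $\widetilde c_1$ and the $-\sum_k b_k(y_{l_i})$ factor appearing in $m_{ii}$); the mutual interaction terms $\varepsilon_{ij}$ weighted by the Green's-type function $G(y_{l_i},y_{l_j})$ (coefficient $c_1$, giving $m_{ij}$); and the gradient terms $|\nabla K(a_i)|/\lambda_i$. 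The crucial point is that when $\beta=n-2\sigma$ the self-interaction and the two-bubble interaction are of the \emph{same order} $\lambda^{-(n-2\sigma)}\sim\varepsilon_{ij}$, so both enter the quadratic form $M(\tau_p)$ on equal footing — this is precisely the balance phenomenon flagged in the introduction.

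Next, I would build the vector field in two layers. For the concentration parameters $\lambda_i$, the natural flow is $\lambda_i\partial\delta_i/\partial\lambda_i$; testing $\partial J(u)$ against $\sum_i\mu_i\lambda_i\partial_{\lambda_i}\delta_i$ produces, to leading order, a quadratic form whose matrix is (a positive multiple of) $M(\tau_p)$ acting on the vector $(\mu_i)$. The idea is to choose the $\mu_i$ according to the sign of $\rho(\tau_p)$: if $\rho(\tau_p)>0$ the matrix is positive definite, the flow \emph{increasing} the $\lambda_i$ cannot decrease $J$, and this is exactly the case $(y_{l_1},\dots,y_{l_p})\in\mathcal C^\infty_{n-2\sigma}$ where the $\lambda_i$ are allowed to be unbounded (a genuine critical point at infinity); otherwise one selects $(\mu_i)$ aligned with an eigenvector of negative eigenvalue, decreasing $J$ along a direction that sends the relevant $\lambda_i\to\infty$ while keeping control. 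For the concentration points $a_i$, I would add a second component moving $a_i$ along $-\nabla K(a_i)/|\nabla K(a_i)|$ (suitably rescaled by $\lambda_i^{-1}$) to absorb the $\sum_i|\nabla K(a_i)|/\lambda_i$ term, using assumption $(A_1)$ (that $\rho(\tau_p)\neq0$) to guarantee the quadratic form is nondegenerate and the convex-combination of the two vector fields still yields a strict decrease bounded below by the full right-hand side.

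The main obstacle I expect is twofold. First, the \textbf{remainder estimates}: I must show that the cross terms between distinct bubbles beyond the leading $\varepsilon_{ij}$ order, and the $o(1)$ errors from the flatness remainder $R(x-y)$ in (\ref{hf}), are genuinely negligible against $\sum_i\lambda_i^{-\beta}+\sum_{i\neq j}\varepsilon_{ij}+\sum_i|\nabla K(a_i)|/\lambda_i$ — delicate because at $\beta=n-2\sigma$ there is no order-separation cushion between self- and mutual interaction, so the constants in $M(\tau_p)$ must be matched exactly rather than merely dominated. Second, the \textbf{gluing and boundedness of $W_2$}: combining the $\lambda$-direction field and the $a$-direction field into a single bounded pseudo-gradient satisfying the inequality uniformly requires a partition-of-unity argument over the sub-regions of $V_2(p,\varepsilon)$, together with the verification that the $\overline v$-correction (the projection onto the space orthogonal to the bubbles and their derivatives, which appears in part (ii) of Theorem \ref{1}) contributes only higher-order terms; I would defer the technical interaction integrals to the appendix results alluded to in the excerpt and invoke them as needed.
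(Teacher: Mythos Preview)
Your matrix-based analysis captures the core of only two of the five sub-regions the paper actually needs, and even there the sign is inverted. In Step~1 of the paper (the region $V_2^1$ where the $y_{l_i}$ are distinct, all lie in $\mathcal K^+$, and $\lambda_i|a_i-y_{l_i}|<\delta$), testing against $\sum_i\alpha_iZ_i$ gives $\langle\partial J(u),\sum_i\alpha_iZ_i\rangle\le -c\,{}^t\Lambda M\Lambda$, so when $\rho(\tau_p)>0$ the flow that \emph{increases} the $\lambda_i$ \emph{does} decrease $J$; this is precisely why those configurations are critical points at infinity. Your statement ``the flow increasing the $\lambda_i$ cannot decrease $J$'' is backwards. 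Likewise in the $\rho(\tau_p)<0$ region (Step~2) the paper does not simply pick a negative eigenvector and push $\lambda_i\to\infty$; it distinguishes whether $\Lambda/|\Lambda|$ is near the lowest eigenvector $e$ (then one \emph{decreases} all $\lambda_i$ via $-\sum\alpha_iZ_i$) or not (then one uses a rotation-type field driving $\Lambda/|\Lambda|$ toward $e$).

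The larger gap is structural: your proposal treats $V_2(p,\varepsilon)$ as if every $u$ has distinct concentration critical points all lying in $\mathcal K^+$ with $a_i$ very close to $y_{l_i}$. The paper's proof requires a five-region decomposition and builds a separate pseudo-gradient in each. You are missing (i) Step~3, where some $y_{l_j}$ has $-\sum_kb_k(y_{l_j})<0$, handled by decreasing those $\lambda_j$ and recursing on the remaining indices; (ii) Step~4, where some $a_j$ satisfies $\lambda_j|a_j-y_{l_j}|\ge\delta/2$, handled not by a naive $-\nabla K/|\nabla K|$ move but by the specific field $X_{i_1}-m_1Z_{i_1}$ built from the weighted integrals in (\ref{ref2}); and most critically (iii) Step~5, the \emph{clustering} case $y_{l_i}=y_{l_j}$ for some $i\neq j$, where the matrix $M(\tau_p)$ is not even defined and one must instead decrease the $\lambda_i$'s at different speeds via the cutoff $\overline\chi(\lambda_j)=\sum_{i\neq j,\,i\in B_k}\chi(\lambda_j/\lambda_i)$, exploiting $\overline\chi(\lambda_j)-\overline\chi(\lambda_i)\ge1$ when $\lambda_i\ll\lambda_j$ to extract $-c\,\varepsilon_{ij}$. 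Without this last piece there is no way to rule out multi-bubble clustering at a single critical point, and the final claim about where $\max\lambda_i$ can be unbounded does not follow.
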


\n In our construction of the pseudogradien $W$, we will use the following notations. \\
Let $u=\dis
\sum_{i=1}^{p} \alpha_{i}
\delta_{i}\in
V(p,\varepsilon)$, such that $a_{i}\in B(y_{l_i},\rho)$, $y_{l_{i}} \in \mathcal{K}$, $\forall i=1,...,p$. \\For
simplicity, if $a_{i}$ is close to a critical point $y_{l_{i}}$, we
will assume that the critical point is at the origin, so we will confuse
$a_{i}$ with $(a_{i}-y_{l_{i}})$.  Now, let $i \in \{1,...,p\}$
and let $M_{1}$ be a positive large constant. We will say that
$$ i \in L_{1} \mbox{   if   } \lambda_{i} |a_{i} | \leq M_{1}$$
and we will say that
$$  i \in L_{2} \mbox{   if   } \lambda_{i} |a_{i} | > M_{1}.$$
For each $i \in \{1,...,p  \}$, we define the following vector
fields:

\begin{equation}\label{ref1} Z_{i}(u)= \alpha_{i}\lambda_{i}
\dis \frac{ \partial \delta_{i}}{\partial \lambda_{i}}
\end{equation}

\begin{equation}\label{ref2}
 X_i= \alpha_{i}\sum_{k=1}^{n}\frac{1}{\lambda_i}\frac{\partial
\delta_{i}}{\partial(a_i)_k}\int_{\mathbb{R}^{n}}b_k\frac{|x_k+\lambda_i(a_i)_k|^\beta}
{(1+\lambda_i|(a_i)_k|)^{\beta-1}} \frac{x_k}{(1+|x|^2)^{n+1}}dx, \hskip2.4cm
\end{equation}
where $(a_i)_k$ is the $k^{th}$ component of $a_i$ in some
geodesic normal coordinates
system.\\
 We claim that $X_{i}$ is bounded. Indeed, the claim is
trivial if $i \in L_{1}$. If $i \in L_{2}$, by elementary
computation, we have the following estimate:

\begin{eqnarray}\nonumber
\dis \int_{\dis \mathbb{R}^{n}} \dis \frac{\big|x_{k} +
\lambda_{i} (a_{i})_{k} \big|^{\beta} x_{k}}{(1+ |
x|^{2})^{n+1}}dx &=&(\lambda_{i} |(a_{i})_{k} |)^{\beta} \dis
\int_{\dis \mathbb{R}^{n}} \big| 1+\frac{x_{k}}{\lambda_{i}
((a_{i})_{k})}\big|^{\beta}\frac{x_{k}}{(1+\mid x\mid^{2})^{n+1}} \dis dx\\
\label{eq4.1} &=& c(\mbox{sign} \lambda_{i}
(a_{i})_{k})(\lambda_{i} |(a_{i})_{k} |)^{\beta-1}(1+o(1)),
\end{eqnarray}
for any $k$, $1\leq k\leq n$ such that $\lambda_{i}|(a_{i})_{k}|>\frac{M_{1}}{\sqrt{n}}$. Hence our claim is valid.\\
Let $k_{i}$ be an index such that
\begin{eqnarray}\label{M}
|(a_{i})_{k_{i}}|= \dis \max_{1\leq j \leq n} |(a_{i})_{j} |.
\end{eqnarray}
It easy to see that if $i \in L_{2}$ then
$\lambda_{i}|(a_{i})_{k_{i}}|>\frac{M_{1}}{\sqrt{n}}$.

\vskip0.2cm
\noindent\textbf{Proof of  Theorem \ref{1}}
In order to complete the construction of the pseudo-gradient $W$ suggested in Theorem \ref{1}, it only remains (using proposition \ref{3.1} and \ref{3.2})
to focus attention at the two following subsets of $V(p,\varepsilon)$.\\
\underline{Subset 1}. We consider here the case of $u=\dis \sum_{i =1}^{p}\alpha_{i}\delta_{i} =\dis \sum_{i \in
I_{1}}\alpha_{i}\delta_{i}+ \dis \sum_{i \in
I_{2}}\alpha_{i}\delta_{i}$ such that
$$I_{1}\neq \emptyset,  I_{2}\neq \emptyset,\dis \sum_{i \in
I_{1}}\alpha_{i}\delta_{i}\in  V_{1}(\sharp I_{1},\varepsilon)\;\mbox{and} \dis \sum_{i \in
I_{2}}\alpha_{i}\delta_{i}\in  V_{2}(\sharp I_{2},\varepsilon). $$
Without loss of generality, we can assume that $ \lambda_{1}\leq \cdots\leq \lambda_{p}.$
We distinguish three cases. \\

\noindent\underline{case 1.}
$u_{1}:=\dis \sum_{i \in
I_{1}}\alpha_{i}\delta_{i}\not\in V_{1}^{1}(\sharp I_{1},\varepsilon)=\{u=\sum_{j=1}^{\sharp I_{1}} \alpha_{j}\delta_{j}, a_{j}\in B(y_{l_{j}},\rho), y_{l_{j}}\in
\mathcal K^{+}\; \forall\; j=1,..., \sharp I_{1}\mbox{ and}\; y_{l_{j}}\neq y_{l_{k}} \forall j \neq k\}.$\\
Let $\widetilde{W_{1}}$ be the pseudo-gradient on $V(p,\varepsilon)$ defined by $\widetilde{W_{1}}(u)=W_{1}(u_{1}),$ where $W_{1}$ is the vector filed defined by proposition \ref{3.2} in $ V_{1}(\sharp I_{1},\varepsilon)$. Note that if $u_{1}\not\in V_{1}^{1}(\sharp I_{1},\varepsilon),$ then the pseudo-gradient $W_{1}(u_{1})$ does not increase the maximum of the $\lambda_{i}$'s, $i\in I_{1}$. Using proposition \ref{3.2}, we have

\begin{eqnarray}\label{39}
\Big\langle  \partial J(u), \widetilde{W_{1}}(u) \Big\rangle &\leq& -c \biggr(\dis\sum_{i\in I_{1}} \frac{1}{\lambda_{i}^{\beta_{i}}}
+ \sum_{j \neq i, i,j \in I_{1}}
   \varepsilon_{ij}+\dis\sum_{i\in I_{1}}\frac{|\nabla K(a_{i})|}{\lambda_{i}}\biggl)+ O\biggr(\sum_{i \in I_{1},j \in I_{2} }
   \varepsilon_{ij}\biggr).
\end{eqnarray}
An easy calculation implies that
\begin{eqnarray}\label{a10}
\varepsilon_{ij}&=&o\biggr(\frac{1}{\lambda_{i}^{\beta_{i}}}\biggr)
+o\biggr(\frac{1}{\lambda_{j}^{\beta_{j}}}\biggr),\forall\; i\in I_{1}\; \mbox{and}\;\forall\; j\in I_{2}.
\end{eqnarray}
Fix $i_{0}\in I_{1}$, we denote by
$$J_{1}=\{i\in I_{2},\mbox{ s.t, } \lambda_{i}^{n-2}\geq \frac{1}{2} \lambda_{i_{0}}^{\beta_{i_{0}}} \}\;\mbox{ and}\;J_{2}=I_{2}\setminus J_{1}.$$
Using \eqref{39} and \eqref{a10}, we find that
\begin{eqnarray}\label{41}
\Big\langle  \partial J(u), \widetilde{W_{1}}(u) \Big\rangle &\leq& -c \biggr(\dis\sum_{i\in I_{1}\cup J_{1}} \frac{1}{\lambda_{i}^{\beta_{i}}}
+\sum_{i\in I_{1}}\dis \frac{|\nabla K(a_{i})|}{\lambda_{i}}
+ \sum_{j \neq i \in I_{1}}
   \varepsilon_{ij}\biggl)+ o\biggr( \sum_{i=1}^{p}
  \frac{1}{\lambda_{i}^{\beta_{i}}}\biggr).
\end{eqnarray}
From another part, by Lemma \ref{19} we have
\begin{eqnarray} \label{43}
\Big\langle  \partial J(u), \sum_{i\in J_{1}}-2^{i}Z_{i}(u) \Big\rangle &\leq& c
\sum_{ j \neq i \;,i\in J_{1} } 2^{i}\lambda_{i}\dis \frac{\partial \varepsilon_{ij}}{\partial\lambda_{i}}+O\biggr(\sum_{i\in J_{1}}  \frac{1}{\lambda_{i}^{\beta_{i}}}\biggr)
+ O\biggr( \sum_{i\in J_{1}\cap L_{2}}
 \dis \frac{|(a_{i}-y_{l_{i}})_{k_{i}}|^{\beta_{i}-2}}{\lambda_{i}^{2}}\biggr).
  \end{eqnarray}
Observe that for $i<j$, we have
\begin{equation}\label{epsilona}
2^i\lambda_i\frac{\partial \varepsilon_{ij}}{\partial\lambda_i}+
2^j\lambda_j\frac{\partial \varepsilon_{ij}}{\partial\lambda_j}\leq
-c\; \varepsilon_{ij}.\end{equation}
In addition for $i \in J_{1}$ and $j\in J_{2}$ we have $\lambda_{j}\leq\lambda_{i}$, so by \eqref{pp} we obtain $\lambda_i\dis\frac{\partial \varepsilon_{ij}}{\partial\lambda_i}\leq
-c\; \varepsilon_{ij}.$
These estimates yield
\begin{eqnarray}\nonumber
\Big\langle  \partial J(u), \sum_{i\in J_{1}}-2^{i}Z_{i}(u) \Big\rangle &\leq& -c
\sum_{ j \neq i \;,i\in J_{1},\; j\in J_{1}\cup J_{2} }  \varepsilon_{ij}+O\biggr(\sum_{i\in J_{1}}  \frac{1}{\lambda_{i}^{\beta_{i}}}\biggr)
 \\
 \nonumber&+& O\biggr( \sum_{i\in J_{1}\cap L_{2}}
 \dis \frac{|(a_{i}-y_{l_{i}})_{k_{i}}|^{\beta_{i}-2}}{\lambda_{i}^{2}}\biggr)+O\biggr( \sum_{i\in J_{1},\; j\in I_{1}}\varepsilon_{ij}\biggr).
  \end{eqnarray}
Let $m_{1}>0$ small enough, using Lemma \ref{21} \eqref{ab} and  \eqref{11} we get
\begin{eqnarray}\nonumber
\Big\langle  \partial J(u), \sum_{i\in J_{1}}-2^{i}Z_{i}(u)+m_{1}\sum_{i\in J_{1}\cap L_{2}}X_{i}(u) \Big\rangle &\leq& -c\biggr(
\sum_{ j \neq i \;,i\in J_{1},\; j\in J_{1}\cup J_{2} }  \varepsilon_{ij}+\sum_{i\in J_{1}}\dis \frac{|\nabla K(a_{i})|}{\lambda_{i}}\biggr)\\\nonumber &+&O\biggr(\sum_{i\in J_{1}}  \frac{1}{\lambda_{i}^{\beta_{i}}}\biggr)+ o
\Big(\dis\sum_{i=1}^{p} \frac{1}{\lambda_{i}^{\beta_{i}}}
\Big),
\end{eqnarray}
and by \eqref{41} we obtain

\begin{eqnarray}\nonumber
&&\Big\langle  \partial J(u),\widetilde{W_{1}}(u)+m_{1}\biggr( \sum_{i\in J_{1}}-2^{i}Z_{i}(u)+m_{1}\sum_{i\in J_{1}\cap L_{2}}X_{i}(u)\biggr) \Big\rangle\\
\label{44}&\leq& -c\biggr(\sum_{i\in I_{1}\cup J_{1}}\dis \frac{1}{\lambda_{i}^{\beta_{i}}}
+\sum_{i\neq j \in  I_{1}}\varepsilon_{ij}+
\sum_{ j \neq i \;,i\in J_{1},\; j\in J_{1}\cup J_{2} }  \varepsilon_{ij}\sum_{i\in I_{1}\cup J_{1}}\dis \frac{|\nabla K(a_{i})|}{\lambda_{i}}\biggr)+o
\Big(\dis\sum_{i=1}^{p} \frac{1}{\lambda_{i}^{\beta_{i}}}
\Big).
\end{eqnarray}
We need to add the remainder indices $i\in J_{2}$. Note that $\widetilde{u}:=\dis \sum_{j \in
J_{2}}\alpha_{j}\delta_{j}\in V_{2}(\sharp J_{2},\varepsilon)$. Thus using proposition \ref{3.1}, we apply the associated vector field which  we will
denote $\widetilde{W_{2}}$. We then have the following estimate
\begin{eqnarray}\label{45}
\Big\langle  \partial J(u),\widetilde{W_{2}}(u) \Big\rangle &\leq& -c\biggr(\sum_{j\in J_{2}}\dis \frac{1}{\lambda_{j}^{\beta_{j}}}+\sum_{i\neq j,\; i,j \in  J_{2}}\varepsilon_{ij}+\sum_{j\in  J_{2}}\dis \frac{|\nabla K(a_{j})|}{\lambda_{j}}\biggr)\\\nonumber &+&O\biggr(\sum_{j\in J_{2}, \; i \in J_{1}}\varepsilon_{ij} \biggr)+o
\Big(\dis\sum_{i=1}^{p} \frac{1}{\lambda_{i}^{\beta_{i}}}
\Big),
\end{eqnarray}
since $|a_{i}-a_{j}|\geq\rho$ for $j\in J_{2}$ and $i\in I_{1}$.\\
In this case $W=\widetilde{W_{1}}+m_{1}\biggr( \widetilde{W_{2}}+ \dis\sum_{i\in J_{1}}-2^{i}Z_{i}+m_{1}\dis\sum_{i\in J_{1}\cap L_{2}}X_{i}\biggr).$\\
From \eqref{44} and \eqref{45} we obtain
$$\Big \langle \partial J(u),W(u) \Big \rangle  \leq -c
\biggr( \dis \sum_{i=1}^{p} \dis
\frac{1}{\lambda_{i}^{\beta_{i}}}+ \dis \sum_{i=1}^{p} \dis
\frac{|\nabla K(a_{i}) |}{\lambda_{i}} + \dis \sum_{i \neq j}
\varepsilon_{i j} \biggl).$$

\noindent\underline{case 2.} $u_{1}:=\dis \sum_{i \in
I_{1}}\alpha_{i}\delta_{i}\in  V_{1}^{1}(\sharp I_{1},\varepsilon)$ and $u_{2}:=\dis \sum_{i \not\in
I_{2}}\alpha_{i}\delta_{i}\not\in  V_{2}^{1}(\sharp I_{2},\varepsilon):=\{u=\sum_{j=1}^{\sharp I_{2}}\alpha_{j}\delta_{j}, a_{j}\in B(y_{l_{j}},\rho), y_{l_{j}}\in \mathcal{K}^{+}, \forall j=1,...,\sharp I_{2}\; \mbox{and}\;
\rho(y_{l_{1}},...,y_{\sharp I_{2}}) >0\}.$\\
Since $u_{2}\in V_{2}(\sharp I_{2},\varepsilon)$, by proposition \ref{3.1}, we can apply the associated vector field which we will denote $V_{1}$. We get
\begin{eqnarray}\label{46}
\Big\langle  \partial J(u),V_{1}(u) \Big\rangle &\leq& -c\biggr(\sum_{i\in I_{2}}\dis \frac{1}{\lambda_{i}^{\beta_{i}}}+\sum_{i\in  I_{2}}\dis \frac{|\nabla K(a_{i})|}{\lambda_{i}}+\sum_{i\neq j,\; i,j \in  I_{2}}\varepsilon_{ij}\biggr)+O\biggr(\sum_{i\in I_{2}, \; j \in I_{1}}\varepsilon_{ij} \biggr).
\end{eqnarray}
Observe that $V_{1}(u)$ does not increase the maximum of the $\lambda_{i}$'s, $i\in I_{2}$, since $u_{2}\not\in V_{2}^{1}(\sharp I_{2},\varepsilon)$.
Fix $i_{0}\in I_{2}$ and let
 $$ \widetilde{J_{1}}=\{i\in I_{1},\; s.t,\;\lambda_{i}^{\beta_{i}}\geq \frac{1}{2} \lambda_{i_{0}}^{n-2} \}\; \mbox{and}\; \widetilde{J_{2}}=I_{1}\setminus \widetilde{J_{1}}.$$
Using \eqref{46} and \eqref{a10}, we get
\begin{eqnarray}\label{47}
\Big\langle  \partial J(u),V_{1}(u) \Big\rangle &\leq& -c\biggr(\sum_{i\in I_{2}\cup \widetilde{J_{1}}}\dis \frac{1}{\lambda_{i}^{\beta_{i}}}+\sum_{i\in  I_{2}}\dis \frac{|\nabla K(a_{i})|}{\lambda_{i}}+\sum_{i\neq j,\; i,j \in  I_{2}}\varepsilon_{ij}\biggr)+o
\Big(\dis\sum_{i=1}^{p} \frac{1}{\lambda_{i}^{\beta_{i}}}
\Big).
\end{eqnarray}
We need to add the indices $i$, $i\in  \widetilde{J_{2}}.$ Let $\widetilde u:=\dis \sum_{j \in
\widetilde J_{2}}\alpha_{j}\delta_{j},$ since  $\widetilde {u} \in V_{1}(\sharp \widetilde J_{2},\varepsilon)$, we can apply the associated
vector field giving by proposition \ref{3.1}. Let $V_{2}$ this vector field. By proposition \ref{3.2} we have
\begin{eqnarray}\nonumber
\Big\langle  \partial J(u),V_{2}(u) \Big\rangle &\leq& -c\biggr(\sum_{j\in \widetilde {J_{2}}}\dis \frac{1}{\lambda_{j}^{\beta_{j}}}+\sum_{j\in\widetilde {J_{2}}  }\dis \frac{|\nabla K(a_{j})|}{\lambda_{j}}+\sum_{i\neq j,\; i,j \in  \widetilde {J_{2}}}\varepsilon_{ij}\biggr)+O\biggr(\sum_{j\in \widetilde {J_{2}}, \; i \not\in \widetilde {J_{2}}}\varepsilon_{ij} \biggr).
\end{eqnarray}
Observe that $I_{1}= \widetilde {J_{1}}\cup\widetilde {J_{2}}$ and we are in the case where $\forall \; i \neq j \in I_{1}$, we have $|a_{i}-a_{j}|\geq\rho$. Thus by \eqref{11} and \eqref{a10}, we get
$$ O\biggr(\dis\sum_{j\in \widetilde {J_{2}}, i\not\in  \widetilde {J_{2}} }\varepsilon_{ij}\biggr)=o
\Big(\dis\sum_{i=1}^{p} \frac{1}{\lambda_{i}^{\beta_{i}}}
\Big),$$
 and hence
$$\Big \langle \partial J(u),V_{1}(u)+V_{2}(u) \Big \rangle  \leq -c
\biggr( \dis \sum_{i=1}^{p} \dis
\frac{1}{\lambda_{i}^{\beta_{i}}}+ \dis \sum_{i\in I_{2}\cup \widetilde {J_{2}}} \dis
\frac{|\nabla K(a_{i}) |}{\lambda_{i}} + \dis \sum_{i \neq j}
\varepsilon_{i j} \biggl).$$
Let in this case $W=V_{1}+V_{2}+m_{1}\dis \sum_{i\in\widetilde {J_{1}}}X_{i}(u)$, $m_{1}$ small enough.\\
Using the above estimate and Lemma \ref{21}, we find that
$$\Big \langle \partial J(u),W(u) \Big \rangle  \leq -c
\biggr( \dis \sum_{i=1}^{p} \dis
\frac{1}{\lambda_{i}^{\beta_{i}}}+ \dis \sum_{i=1}^{p} \dis
\frac{|\nabla K(a_{i}) |}{\lambda_{i}} + \dis \sum_{i \neq j}
\varepsilon_{i j} \biggl).$$

\noindent\underline{case 3.} $u_{1}\in V_{1}^{1}(\sharp I_{1},\varepsilon)$ and  $u_{2}\in V_{2}^{1}(\sharp I_{2},\varepsilon).$\\
Let $\widetilde{V_{1}}$ (respectively $\widetilde{V_{2}}$) be the pseudo-gradient in $V(p,\varepsilon)$ defined by $\widetilde{V_{1}}(u)=W_{1}(u_{1})$
(respectively  $\widetilde{V_{2}}(u)=W_{2}(u_{2})$) where $W_{1}$ (respectively $W_{2}$) is the vector field defined by proposition \ref{3.2} (respectively \ref{3.1}) in $V_{1}^{1}(\sharp I_{1},\varepsilon)$ (respectively $V_{2}^{1}(\sharp I_{2},\varepsilon)$) and let in this case $ W=  \widetilde{V_{1}}+\widetilde{V_{2}}.$\\
Using proposition \ref{3.1}, proposition \ref{3.2} and \eqref{a10} we get
$$\Big \langle \partial J(u),W(u) \Big \rangle  \leq -c
\biggr( \dis \sum_{i=1}^{p} \dis
\frac{1}{\lambda_{i}^{\beta_{i}}}+ \dis \sum_{i=1}^{p} \dis
\frac{|\nabla K(a_{i}) |}{\lambda_{i}} + \dis \sum_{i \neq j}
\varepsilon_{i j} \biggl).$$
Notice that in the first and second cases, the maximum of the $\lambda_{i}$'s, $1\leq i\leq p$ is a bounded function and hence the Palais-Smail condition is satisfied along the flow-lines of $W$. However in the third case all the $\lambda_{i}$'s, $1\leq i\leq p$, will increase and goes to $+\infty$ along the flow-lines generated by $W$.\\

\noindent\underline{Subset 2}. We consider the
case of $u=\dis \sum_{i=1}^{p} \alpha_{i} \delta_{i} \in V(p,
\varepsilon) $, such that there exist $a_{i}$ satisfying
$a_{i}\notin \dis\cup_{y\in \mathcal{K} }B(y, \rho)$. We order the $\lambda_i$s in an increasing order, without loss of generality, we
suppose that $\lambda_1\leq \cdots \leq\lambda_p$. Let $i_1$ be such that for any $i < i_1$, we have $a_i\in B(y_{\ell_i}, \rho), y_{\ell_i}\in \mathcal{K}$ and $a_{i_1} \notin \dis\cup_{y\in \mathcal{K} }B(y, \rho)$. Let us define $$u_1= \sum_{i<i_1}\alpha_i\delta_i.$$

Observe that $u_1$ has to satisfy one of three cases above that is $u\in V_1(i_1-1, \varepsilon)$ or $u_1\in V_2(i_1-
1, \varepsilon)$ or $u_1$ satisfies the condition of subset 1. Thus we can apply the associated vector field which
we will denote by $Y$ and we have then the following estimate.

$$\Big \langle \partial J(u), Y(u) \Big \rangle  \leq -c
\biggr( \dis \sum_{i<i_1} \dis
\frac{1}{\lambda_{i}^{\beta_{i}}}+ \dis \sum_{i<i_1} \dis
\frac{|\nabla K(a_{i}) |}{\lambda_{i}} + \dis \sum_{i \neq j, ij<i_1}
\varepsilon_{i j} \biggl)+ O\biggl(\dis \sum_{i<i_1, j\geq i_1}
\varepsilon_{i j} \biggl).$$
Now we define the following vector field

$$Y'= \frac{1}{\lambda_{i_1}}\frac{\partial \delta_{i_1}}{\partial a_{i_1}}\frac{\nabla K(a_{i_1})}{|\nabla K(a_{i_1})|}-c'\sum_{i\geq i_1}2^i Z_i.$$
Using Propositions \ref{3.1}, \ref{3.2} and the fact that $|\nabla K(a_{i_1})|\geq c >0$, we derive

$$\Big \langle \partial J(u), Y'(u) \Big \rangle  \leq -c \frac{1}{\lambda_{i_1}} + O\biggl(\dis \sum_{i \neq i_1}
\varepsilon_{i j} \biggl) -c'  \sum_{j\neq i, i\geq i_1}
\varepsilon_{i j}  + o\biggl( \sum_{i\geq i_1}\frac{1}{\lambda_{i}}\biggl).$$
Taking $c'$ positive large enough, we find

$$\Big \langle \partial J(u), Y'(u) \Big \rangle  \leq -c
\biggr( \dis \sum_{i=i_1}^p \dis
\frac{1}{\lambda_{i}^{\beta_{i}}}+ \dis \sum_{i=i_1}^p\dis
\frac{|\nabla K(a_{i}) |}{\lambda_{i}} + \dis \sum_{i \neq j, i\geq i_1}
\varepsilon_{i j} \biggl).$$
Now, let $W := Y' +m_1 Y$ where $m_1$ is a small positive constant, then we have

$$\Big \langle \partial J(u),W(u) \Big \rangle  \leq -c
\biggr( \dis \sum_{i=1}^{p} \dis
\frac{1}{\lambda_{i}^{\beta_{i}}}+ \dis \sum_{i=1}^{p} \dis
\frac{|\nabla K(a_{i}) |}{\lambda_{i}} + \dis \sum_{i \neq j}
\varepsilon_{i j} \biggl).$$

\noindent Finally, observe
that our pseudo-gradient $W$ in $V(p,\varepsilon)$ satisfies claim $(i)$ of Theorem \ref{1} and it is bounded, since
 $||\dis \lambda_{i}\frac{\partial\delta_{(a_{i,\lambda_{i}})}}{\partial\lambda_{i}}||$ and
 $||\dis \frac{1}{\lambda_{i}}\frac{\partial\delta_{(a_{i,\lambda_{i}})}}{\partial a_{i}}||$ are bounded. From the definition of $W$, the $\lambda_{i}$'s,
 $1\leq i\leq p$ decrease along the flow-lines of $W$ as long as these flow-lines do not enter in the neighborhood of finite number of critical points $y_{l_{i}}$,
 $i=1,...,p$, of  $\mathcal{K}$ such that $(y_{l_{1}},...,y_{l_{p}})\in \mathcal P^{\infty}$.\\
 Now, arguing as in Appendix 2 of \cite{b2}, claim $(ii)$ of Theorem \ref{3.1} follows from $(i)$ and proposition \ref{p24}. This complete the proof of
 Theorem \ref{1}.

\vskip0.2cm
\noindent\textbf{Proof of  Proposition \ref{3.2}.}
In our construction of the pseudo gradient $W_{1}$,  we need  the following lemmas:

\begin{lem}\label{19}
Let $u=\dis \sum_{i=1}^{p}
\alpha_{i} \delta_{i}\in
V(p,\varepsilon)$, such that  $a_{i}\in B(y_{l_{i}},\rho)$, $y_{l_{i}}\in\mathcal{K}$, $\forall\; i=1,...,p$. We then have
\begin{eqnarray}\nonumber
 <\partial J(u), Z_i(u)>&=&-2c_{2} J(u) \sum_{j\neq i}\alpha_i \alpha_{j}\frac{\partial\varepsilon_{ij}}{\partial \lambda_i}+\dis O\biggr(\dis \frac{1}{\lambda_{i}^{\beta_{i}}}\biggr)\\
 \nonumber&+&\biggr[ O\biggr(\frac{|(a_{_{i}}-y_{l_{i}})_{_{k_{i}}}|^{\beta_{i}-2}}{\lambda_{i}^{2}} \biggr), \mbox{if}\; i\in L_{2}\biggr ]
+ o\Big(\sum_{j\neq i} \varepsilon_{ij}\Big)+o\biggr(\sum_{j=1}^{p}
  \dis\frac{1}{\lambda_{j}^{\beta_{j}}}\biggr),
\end{eqnarray}
where $k_{i}$ is defined in \eqref{M}.
\end{lem}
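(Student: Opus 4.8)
The plan is to start from the explicit formula for the differential of $J$ on the unit sphere $\Sigma$. Since $\|u\|_{H^1(D_n)}=1$, one has
$$\langle \partial J(u),h\rangle = 2J(u)\Big[\langle u,h\rangle - J(u)^{\frac{n}{n-2\sigma}}\int_{S^n}K\,u^{\frac{n+2\sigma}{n-2\sigma}}\,h\Big]$$
for every $h\in H^1(D_n)$, where $\langle\cdot,\cdot\rangle$ is the Dirichlet pairing. Applying this with $h=Z_i(u)=\alpha_i\lambda_i\partial_{\lambda_i}\delta_i$ reduces the computation to two pieces: the quadratic pairing $\langle u,Z_i\rangle$ and the nonlinear pairing $\int_{S^n}K\,u^{\frac{n+2\sigma}{n-2\sigma}}Z_i$. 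I would work throughout in the Caffarelli--Silvestre extension, so that both pairings become weighted integrals of the bubbles $\tilde\delta_{a,\lambda}$ over $\mathbb{R}^{n+1}_+$, and the constants $c_1,c_2,\widetilde c_1$ arise as the explicit convergent integrals recorded after \eqref{G}.

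For the quadratic term I would expand $\langle u,Z_i\rangle=\sum_{j}\alpha_i\alpha_j\langle\delta_j,\lambda_i\partial_{\lambda_i}\delta_i\rangle$. The diagonal term $j=i$ vanishes because $\|\delta_i\|^2$ is independent of $\lambda_i$, so $\langle\delta_i,\lambda_i\partial_{\lambda_i}\delta_i\rangle=\tfrac12\lambda_i\partial_{\lambda_i}\|\delta_i\|^2=0$. For $j\neq i$ I would invoke the standard interaction estimate $\langle\delta_i,\delta_j\rangle=c_2\,\varepsilon_{ij}(1+o(1))$ together with its $\lambda_i$-derivative, which produce the leading interaction quantity $c_2\sum_{j\neq i}\alpha_i\alpha_j\lambda_i\partial_{\lambda_i}\varepsilon_{ij}$ up to an error $o(\sum_{j\neq i}\varepsilon_{ij})$. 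This is exactly where the main term $-2c_2 J(u)\sum_{j\neq i}\alpha_i\alpha_j\partial_{\lambda_i}\varepsilon_{ij}$ of the statement originates.

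For the nonlinear term I would use the near-diagonal expansion of $\big(\sum_j\alpha_j\delta_j\big)^{\frac{n+2\sigma}{n-2\sigma}}$, separating the principal self contribution $\alpha_i^{\frac{n+2\sigma}{n-2\sigma}}\int K\,\delta_i^{\frac{4\sigma}{n-2\sigma}}\lambda_i\partial_{\lambda_i}\delta_i$ from the cross contributions of order $\varepsilon_{ij}$. In the self contribution I would extract $K(a_i)$, whose part vanishes by scale invariance of $\int\delta_i^{\frac{2n}{n-2\sigma}}$, leaving $\int\big(K-K(a_i)\big)\delta_i^{\frac{4\sigma}{n-2\sigma}}\lambda_i\partial_{\lambda_i}\delta_i$. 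Inserting the flatness expansion $K(x)-K(a_i)=\sum_k b_k\big(|(x-y_{l_i})_k|^\beta-|(a_i-y_{l_i})_k|^\beta\big)+(\text{remainder})$ and rescaling $x=a_i+y/\lambda_i$, I would estimate this integral separately for $i\in L_1$, where $\lambda_i|a_i-y_{l_i}|$ is bounded and a direct change of variables yields a clean $O(\lambda_i^{-\beta_i})$, and for $i\in L_2$, where the singular factor $|y_k+\lambda_i(a_i)_k|^\beta$ with $\lambda_i|(a_i)_k|$ large is handled exactly as in the computation \eqref{eq4.1} defining $X_i$; this regime produces the conditional term $O(|(a_i-y_{l_i})_{k_i}|^{\beta_i-2}/\lambda_i^2)$. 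The cross contributions are then absorbed into $o(\sum_{j\neq i}\varepsilon_{ij})+o(\sum_j\lambda_j^{-\beta_j})$.

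Finally I would assemble the two pieces, using the defining bound $|J(u)^{\frac{n}{n-2\sigma}}\alpha_i^{\frac{2}{n-2\sigma}}K(a_i)-1|<\varepsilon$ of $V(p,\varepsilon)$ to normalize the constant multiplying the nonlinear interaction so that it aligns with $c_2$; the interaction derivatives combine into the stated leading term, while all remaining pieces collect into $O(\lambda_i^{-\beta_i})$, the conditional $L_2$ term, $o(\sum_{j\neq i}\varepsilon_{ij})$ and $o(\sum_j\lambda_j^{-\beta_j})$. The hardest step is the sharp estimate of the self contribution $\int(K-K(a_i))\delta_i^{\frac{4\sigma}{n-2\sigma}}\lambda_i\partial_{\lambda_i}\delta_i$ under $(f)_\beta$: one must track the singular integrand in the regime $i\in L_2$ precisely enough to isolate $O(|(a_i-y_{l_i})_{k_i}|^{\beta_i-2}/\lambda_i^2)$ while checking that no larger contribution survives, and control the flatness remainder $R$ uniformly through the hypothesis $\sum_{s=0}^{[\beta]}|\nabla^sR(y)||y|^{-\beta-s}=o(1)$.
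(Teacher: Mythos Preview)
Your proposal is correct and follows essentially the same mathematical route as the paper, just at a finer level of granularity. The paper's own proof of this lemma is only a few lines: it directly invokes Proposition~\ref{p3.2}(ii) from the appendix, which already packages everything you outline (the quadratic pairing, the near-diagonal expansion of the nonlinear term, and the insertion of $(f)_\beta$) into the closed form
\[
\langle\partial J(u),Z_i(u)\rangle
= -2c_2 J(u)\sum_{j\neq i}\alpha_i\alpha_j\lambda_i\frac{\partial\varepsilon_{ij}}{\partial\lambda_i}
+ \frac{C}{\lambda_i^{\beta_i}}\sum_{k=1}^n b_k\!\int_{\mathbb{R}^n}\!\mathrm{sign}\big(x_k+\lambda_i(a_i-y_{l_i})_k\big)\,
\big|x_k+\lambda_i(a_i-y_{l_i})_k\big|^{\beta_i-1}\frac{x_k\,dx}{(1+|x|^2)^n}
+\text{(errors)},
\]
and then simply estimates the remaining integral coordinate by coordinate: if $\lambda_i|(a_i-y_{l_i})_k|\le M_1/\sqrt n$ the integral is $O(1)$, contributing $O(\lambda_i^{-\beta_i})$; if $\lambda_i|(a_i-y_{l_i})_k|>M_1/\sqrt n$ (which for $i\in L_2$ happens at least for $k=k_i$) the integral is $O\big((\lambda_i|(a_i-y_{l_i})_k|)^{\beta_i-2}\big)$, which after division by $\lambda_i^{\beta_i}$ is exactly $O\big(|(a_i-y_{l_i})_{k_i}|^{\beta_i-2}/\lambda_i^2\big)$. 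Your sketch effectively reproves Proposition~\ref{p3.2}(ii) inline and then performs this same $L_1/L_2$ splitting, so the arguments coincide.

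Two minor points of precision. First, the principal self contribution should carry the exponent $\delta_i^{\frac{n+2\sigma}{n-2\sigma}}$, not $\delta_i^{\frac{4\sigma}{n-2\sigma}}$; this is just a slip and does not affect the structure. Second, after the $\lambda_i$-differentiation the relevant integrand has power $|\cdot|^{\beta_i-1}$ (with weight $(1+|x|^2)^{-n}$), not $|\cdot|^{\beta_i}$, so the asymptotic you need in the $L_2$ regime is the one-lower-power analogue of~\eqref{eq4.1} rather than~\eqref{eq4.1} itself; this is precisely why the conditional term scales as $(\lambda_i|(a_i-y_{l_i})_{k_i}|)^{\beta_i-2}$ and hence as $|(a_i-y_{l_i})_{k_i}|^{\beta_i-2}/\lambda_i^2$.
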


\begin{proof}
Observe that for $k\in\{ 1,...,n\}$, if $\lambda_{i}|(a_{i}-y_{l_{i}})_{k}|>\frac{M_{1}}{\sqrt{n}}$, we have
\begin{eqnarray}\label{20}\label{47}
\dis \int_{\dis \mathbb{R}^{n}} \dis
\frac{\big|x_{k} + \lambda_{i} (a_{i}-y_{l_{i}})_{k} \big|^{\beta_{i}-1} x_{k}}{(1+ |
x|^{2})^{n}}dx
= O\Big(
(\lambda_{i} |(a_{i}-y_{l_{i}})_{k} |)^{\beta_{i}-2}\Big),
\end{eqnarray}
taking $M_{1}$ large enough. If not, we have
\begin{eqnarray}\nonumber
\dis \int_{\dis \mathbb{R}^{n}} \dis
\frac{\big|x_{k} + \lambda_{i} (a_{i}-y_{l_{i}})_{k} \big|^{\beta_{i}-1} |x_{k}|}{(1+ |
x|^{2})^{n}}dx
&=& O(1).
\end{eqnarray}
Using the fact that $k_{i}$ defined in  \eqref{M} satisfies $\lambda_{i}|(a_{i}-y_{l_{i}})_{k_{i}}|>\frac{M_{1}}{\sqrt{n}}$, if $i\in L_{2}$, Lemma
\ref{19} follows from Proposition \ref{p3.2}
\end{proof}

\begin{lem}\label{21}
For $u=\dis \sum_{i=1}^{p}
\alpha_{i} \delta_{i}\in
V(p,\varepsilon)$, such that  $a_{i}\in B(y_{l_{i}},\rho)$, $y_{l_{i}}\in\mathcal{K}$, $\forall\; i=1,...,p$, we have
\begin{eqnarray}\nonumber
 <\partial J(u), X_i(u)>&\leq&O\biggr( \sum_{j\neq i}\frac{1}{\lambda_{i}}|\frac{\partial\varepsilon_{ij}}{\partial a_i}|\biggr)+\dis O\biggr[\biggr(\dis \frac{1}{\lambda_{i}^{\beta_{i}}}\biggr),\mbox{if} \; i\in L_{1}\biggr]\\
 \nonumber&+&\biggr[-c \biggr(\frac{1}{\lambda_{i}^{\beta_{i}}} +\frac{|(a_{_{i}}-y_{l_{i}})_{_{k_{i}}}|^{\beta_{i}-1}}{\lambda_{i}} \biggr), \mbox{if}\; i\in L_{2}\biggr ]
+o\biggr(\sum_{j=1}^{p}
  \dis\frac{1}{\lambda_{j}^{\beta_{j}}}\biggr),
\end{eqnarray}
where $k_{i}$ is defined in \eqref{M}.
\end{lem}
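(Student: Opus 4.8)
The plan is to estimate the pairing $\langle \partial J(u), X_i(u)\rangle$ by unfolding the definition of the vector field $X_i$ from \eqref{ref2} and using the known expansion of $\partial J(u)$ on $V(p,\varepsilon)$. Since $X_i$ is built from the directional derivatives $\alpha_i\lambda_i^{-1}\partial\delta_i/\partial(a_i)_k$ weighted by the flatness coefficients $b_k$ and the integral factor, the scalar product naturally splits into a sum over the index $k$. First I would invoke the basic gradient expansion (the fractional analogue of the standard estimate, the Proposition \ref{p3.2} already cited in the proof of Lemma \ref{19}) to write $\langle \partial J(u),\alpha_i\lambda_i^{-1}\partial\delta_i/\partial(a_i)_k\rangle$ in terms of a main term coming from $\int_{S^n} K U^{2n/(n-2\sigma)}$ differentiated in the concentration point, plus interaction terms $O(\lambda_i^{-1}|\partial\varepsilon_{ij}/\partial a_i|)$ and lower order remainders $o(\sum_j \lambda_j^{-\beta_j})$.

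The key step is to expand the curvature-derivative main term using the $\beta$-flatness hypothesis $(f)_\beta$. Plugging $K(x)=K(y_{l_i})+\sum_k b_k|(x-y_{l_i})_k|^\beta+R$ into the relevant integral and changing variables $x\mapsto x/\lambda_i$ produces, for each coordinate $k$, precisely the integral $\int_{\mathbb R^n} b_k |x_k+\lambda_i(a_i)_k|^{\beta}\, x_k (1+|x|^2)^{-(n+1)}dx$ that appears as the weight in the definition \eqref{ref2} of $X_i$. Because $X_i$ contracts $\partial\delta_i/\partial(a_i)_k$ against exactly this weight, the contribution of coordinate $k$ becomes a perfect square in sign, i.e.\ it assembles into a nonnegative combination. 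This is where one sees the sign: the dominant term is (up to positive constants) $-c\big(\sum_k b_k^{\,2}(\text{integral}_k)^2/\lambda_i^{?}\big)$, which one then has to compare against $\lambda_i^{-\beta_i}$ and $|\nabla K(a_i)|/\lambda_i$.

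The dichotomy $i\in L_1$ versus $i\in L_2$ governs the final bookkeeping. For $i\in L_1$, where $\lambda_i|a_i|\le M_1$, the weight integrals are $O(1)$ and the whole expression is simply absorbed into the term $O(\lambda_i^{-\beta_i})$, giving the bracketed $L_1$ contribution in the statement. For $i\in L_2$, I would use the precise asymptotic \eqref{eq4.1}, namely that the defining integral is $c\,(\mathrm{sign}\,\lambda_i(a_i)_k)(\lambda_i|(a_i)_k|)^{\beta-1}(1+o(1))$, to extract a definite negative sign. Combined with the companion estimate \eqref{20} for the $\beta_i-1$ integral appearing when differentiating $\delta_i$, the leading term reconstructs $-c\big(\lambda_i^{-\beta_i}+|(a_i-y_{l_i})_{k_i}|^{\beta_i-1}\lambda_i^{-1}\big)$, using that $k_i$ is the index of maximal component so its contribution dominates the sum over $k$.

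The main obstacle I anticipate is controlling the sign and magnitude of the cross terms and the remainder $R$ in the flatness expansion uniformly in the $L_2$ regime: one must show that the higher-order Taylor remainder and the off-diagonal coordinate contributions do not spoil the negativity extracted from the diagonal $k=k_i$ term. This requires the flatness decay $\sum_{s=0}^{[\beta]}|\nabla^s R(y)||y|^{-\beta-s}=o(1)$ to swallow the error into $o(\sum_j \lambda_j^{-\beta_j})$, and it requires a careful matching of homogeneity degrees so that the integral weight in $X_i$ cancels the $\lambda_i$ powers correctly; getting that homogeneity accounting exactly right, together with the interaction terms $O(\lambda_i^{-1}|\partial\varepsilon_{ij}/\partial a_i|)$, is the delicate part of the argument.
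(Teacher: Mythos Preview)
Your approach matches the paper's almost exactly: the paper applies Proposition~\ref{p3.3} (the $a_i$-derivative expansion---not Proposition~\ref{p3.2}, which handles $\lambda_i$-derivatives) to see that the main term assembles into $-c\,\lambda_i^{-\beta_i}$ times a nonnegative sum of squares, drops all but the dominant $k_i$-th summand to obtain \eqref{23}, and then invokes \eqref{eq4.1} for the $L_2$ case. Note that \eqref{20} is not needed here: that estimate concerns the $|x_k+\lambda_i(a_i)_k|^{\beta_i-1}$ integral arising in the $Z_i$ analysis of Lemma~\ref{19}, whereas for $X_i$ the relevant integral carries the full power $\beta_i$ and its asymptotic is precisely \eqref{eq4.1}.
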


\begin{proof}
Using proposition \ref{p3.3}, we have
\begin{eqnarray}\nonumber
 <\partial J(u), X_i(u)>
&\leq& -c \frac{1
}{{\lambda_i}^{\beta_{i}}}
\Bigg( \int_{\dis \mathbb{R}^{n}} b_{k_{i}}
\frac{|x_k+\lambda_i(a_i-y_{l_{_{i}}})_{k_{i}}|^{\beta_{i}}}
{(1+\lambda_i|(a_i-y_{l_{i}})_{k_{i}}|)^{(\beta_{i}-1)/2}}
\frac{x_{k_{i}}}{(1+|x|^2)^{n+1}}dx\Bigg)^2 \\
\label{23}
&+& O\biggr( \sum_{j\neq i}\frac{1}{\lambda_{i}}|\frac{\partial\varepsilon_{ij}}{\partial a_i}|\biggr)+o\biggr(\sum_{j=1}^{p}
  \dis\frac{1}{\lambda_{j}^{\beta_{j}}}\biggr).
\end{eqnarray}
Using \eqref{eq4.1} and the fact that $\lambda_{i}|(a_{i}-y_{l_{i}})_{k_{i}}|>\frac{M_{1}}{\sqrt{n}}$, if $i\in L_{2}$, lemma \ref{21} follows.
\end{proof}

\vskip0.2cm

In order to construct the required pseudo-gradient,
we  have to divide the set  $V_{1}(p,\varepsilon)$ in four different regions, to
construct an appropriate pseudo-gradient in each region and then glue up through convex combinations. Let\\
$V^{1}_{1}(p,\varepsilon) := \Big\{u=\dis \sum_{i=1}^{p}\alpha_{i}
\delta_{(a_{i} \lambda_{i})}\in V_{1}(p,\varepsilon),\;
y_{l_{i}}\neq y_{l_{j}} ,\; \forall i \neq j,\;-
\sum_{k=1}^{n}b_{k}(y_{l_{i}}) > 0, \mbox{and
}\lambda_{i}|a_{i}-y_{l_{i}}|< \delta,\; \forall i=1,...,p \Big\}.$\\
$V^{2}_{1}(p,\varepsilon) := \Big\{u=\dis \sum_{i=1}^{p}\alpha_{i}
\delta_{(a_{i} \lambda_{i})}\in V_{1}(p,\varepsilon),\;
y_{l_{i}}\neq y_{l_{j}}, \; \forall i \neq
j,\;\lambda_{i}|a_{i}-y_{l_{i}}|< \delta,\; \forall i=1,..,p
\mbox{ and there exist }  \exists\; i_{1},...,i_{q}\\ \mbox{ such
that }-  \sum_{k=1}^{n}b_{k}(y_{l_{i_{j}}}) < 0, \;\forall
j=1,...,q \Big\}.$\\
$
V^{3}_{1}(p,\varepsilon) := \Big\{u=\dis \sum_{i=1}^{p}\alpha_{i}
\delta_{(a_{i} \lambda_{i})}\in V_{1}(p,\varepsilon),\;
y_{l_{i}}\neq y_{l_{j}}, \; \forall i \neq j,\; \mbox{and there
exist } j \mbox{ (at least)}, s, t, \lambda_{j}|a_{j}-y_{l_{j}}|\geq\dis \frac{\delta}{2}  \Big\}.
$\\
$
V^{4}_{1}(p,\varepsilon) := \Big\{u=\dis \sum_{i=1}^{p}\alpha_{i}
\delta_{(a_{i} \lambda_{i})}\in V_{1}(p,\varepsilon),\;
 \mbox{such that there exist } i\neq j
  \mbox{ satisfying }y_{l_{i}}= y_{l_{j}} \Big\}.
$\\

\noindent\underline {Pseudo-gradient in $V_{1}^{1}(p,\varepsilon)$}. Let  $u=\dis \sum_{i=1}^{p}
\alpha_{i} \delta_{i}\in
V_{1}^{1}(p,\varepsilon)$. For any $i\neq j$, we have $| a_{i}-a_{j}|> \rho$, therefore

\begin{equation}\label{11}
\varepsilon_{ij}= O\biggr(\dis\frac{1}{(\lambda_{i}\lambda_{j})^{\frac{n-2\sigma}{2}}}\biggr)=o\biggr(\frac{1}{\lambda_{i}^{\beta_{i}}}\biggr)
+o\biggr(\frac{1}{\lambda_{j}^{\beta_{j}}}\biggr),
\end{equation}
since $\beta_{i}, \beta_{j}<n-2\sigma$. Let $W_{1}^{1}(u)=\dis \sum_{i=1}^{p} Z_{i}(u)$, using the fact that $\dis\frac{|\nabla
K(a_i)|}{\lambda_i}$ is small with respect to $\dis\frac{1}{{\lambda_i}^{\beta}}$, we obtain from Proposition \ref{p3.2}
$$ \Big \langle \partial J(u), W^{1}_{1}(u)  \Big \rangle \leq -c \biggr( \dis \sum_{i=1}^{p} \dis \frac{1}
{\lambda_{i}^{\beta_{i}}}+ \dis \sum_{i=1}^{p} \dis \frac{|\nabla
K(a_{i}) |}{\lambda_{i}} + \dis \sum_{i \neq j} \varepsilon_{i j}
\biggl). $$
\underline {Pseudo-gradient in $V_{1}^{2}(p,\varepsilon)$}. Let $u =\dis \sum_{i=1}^{p}
\alpha_{i} \delta_{i}\in
V_{1}^{2}(p,\varepsilon)$. Without loss of generality, we can assume that $1,...,q$ are the indices which satisfy $-\sum_{k=1}^{n}b_{k}(y_{l_{i}})<0$,
$\forall i=1,...,q$. Let $$I= \Big \{ i=1,...,p \;\mbox{ s.t }\;
\lambda_{i}^{\beta_{i}}\leq \dis \frac{1}{10} \dis \min_{1 \leq j
\leq q} \lambda_{j}^{\beta_{j}} \Big \}.$$
 In this region we define $W_{1}^{2}(u)= \dis \sum_{i=1}^{q} (- Z_{i})(u) +\dis \sum_{i\in I}  Z_{i}(u).$ Using similar calculation than \cite{BC3}, we obtain
$$ \Big \langle \partial J(u), W^{2}_{1}(u)  \Big \rangle \leq -c \biggr( \dis \sum_{i=1}^{p} \dis \frac{1}
{\lambda_{i}^{\beta_{i}}}+ \dis \sum_{i=1}^{p} \dis \frac{|\nabla
K(a_{i}) |}{\lambda_{i}} + \dis \sum_{i \neq j} \varepsilon_{i j}
\biggl). $$
\underline {Pseudo-gradient in $V_{1}^{3}(p,\varepsilon)$}. Let $u =\dis \sum_{i=1}^{p}
\alpha_{i} \delta_{i}\in
V_{1}^{3}(p,\varepsilon)$. Without loss of generality, we can assume that $\lambda_{1}^{\beta_{1}}= \min\{ \lambda_{j}^{\beta_{j}}
\mbox{ s.t }
\lambda_{j}|a_{j}-y_{l_{j}} | \geq \delta\}.$ Let $J:=\Big \{ i, 1 \leq i \leq p \;\mbox{ s.t }\;
\lambda_{i}^{\beta_{i}} \geq \dis \frac{1}{2} \dis
\lambda_{1}^{\beta_{1}} \Big \}.$ Observe that if $i \notin J$ we have $\lambda_{i}|a_{i}-y_{l_{i}} | \geq \delta $. We write $u$ as follows $ u= \dis \sum_{i \in J^{C}}\alpha_{i} \delta_{i}+ \dis \sum_{i \in J}\alpha_{i} \delta_{i}= u_{1}+ u_{2}. $ Observe that $u_{1}$ has to satisfy one of two above cases that is $u_{1}\in V_{1}^{1}(\sharp J^{C} , \varepsilon)$ or
$u_{1}\in V_{1}^{2}(\sharp J^{C}, \varepsilon)$. Let $\widetilde{W}$ be a pseudo-gradient on $V_{1}^{3}(p, \varepsilon)$
defined by $\widetilde{W}(u)=W_{1}^{1}(u_{1})$, if $u_{1}\in V_{1}^{1}(\sharp J^{C} , \varepsilon)$ or $\widetilde{W}(u)=W_{1}^{2}(u_{1})$, if
$u_{1}\in V_{1}^{2}(\sharp J^{C}, \varepsilon)$. Let in this region $W_1^{3}(u)=\widetilde{W}(u)+X_1(u)+ \dis \sum_{i \in J \cap L_{2}}X_i(u)- M_1 Z_1(u)$. By Propositions \ref{p3.2} and \ref{p3.3}, we have
$$ \Big \langle \partial J(u), W_{1}^{3}(u) \Big \rangle \leq -c \biggr( \dis \sum_{i=1}^{p}
 \dis \frac{1}{\lambda_{i}^{\beta_{i}}}
+ \dis \sum_{i=1}^{p} \dis \frac{|\nabla K(a_{i}) |}{\lambda_{i}}
+ \dis \sum_{i \neq j} \varepsilon_{i j} \biggl) .$$
\underline {Pseudo-gradient in $V_{1}^{4}(p,\varepsilon)$}. We study now the case of u=$\dis \sum_{i=1}^{p}
\alpha_{i} \delta_{i}\in
V_{1}^{4}(p,\varepsilon)$.
Let, $B_{k}=\{j,\; 1\leq j\leq p\; \mbox{ s.t }\; a_{j} \in
B(y_{l_{k}},\rho) \}.$ In this case, there is at least one
$B_{k}$ which contains at least two indices. Without loss of
generality, we can assume that $1,...,q$ are the indices such that
the set $B_{k}$, $1\leq k \leq q$ contains at least two indices.
We will decrease the $\lambda_{i}$'s for $i \in B_{k}$ with
different speed. For this purpose, let $$\begin{array}{ccc}
  \chi : \mathbb{R}& \longrightarrow & \mathbb{R}^{+} \\
   t& \longmapsto &  \left\{%
\begin{array}{ll}
    0 & \hbox{  if    }| t|\;\leq \widetilde{\gamma}  \\
    1 & \hbox{  if    } | t|\;\geq 1.
\end{array}%
\right.
\end{array}$$
Here $\widetilde{\gamma}$ is a small constant.
For $j\in B_{k}$, set $\overline{\chi}\;(\lambda_{j})= \dis
\sum_{i \neq j,\; i \in B_{k}} \chi \; \Big(\dis
\frac{\lambda_{j}}{\lambda_{i}}\Big) $. Let, $I_{1}=\Big\{ i,\; 1\leq i \leq p, \; \lambda_{i}|a_{i}-y_{l_{i}}
|\geq \delta \Big\}.$\\
We distinguish two cases:\\
\underline{case 1.}  $I_{1}\neq \emptyset$, let in this case  $$J=\Big\{ j, 1\leq j\leq p,s.t,\; \lambda_{j}^{\beta_{j}}\geq \dis\frac{1}{2}\dis \min_{i
\in I_{1}} \lambda_{i}^{\beta_{i}} \Big\}.$$
Observe that, if $a_i \in B(\;y_{l_i},\,\rho\,)$, we have $
|\nabla K(a_{i}) | \sim \sum_{k=1}^{n}|b_k|
|(a_{i}-y_{l_{i}})_{k} |^{\beta_{i}-1}$. So, if $i\in L_1$ we have  $\dis
\frac{|\nabla K(a_{i}) |}{\lambda_{i}}\leq
\dis \frac{c}{\lambda_{i}^{\beta_{i}}}$, and if $i\in L_2$ we have
$\dis
\frac{|\nabla K(a_{i}) |}{\lambda_{i}} \leq\, c\, \dis
\frac{|(a_{i}-y_{l_{i}})_{k} |^{\beta_{i}-1}}{ \lambda_{i}}$. Thus by lemma \ref{21} we obtain
\begin{eqnarray*}
  \Big \langle \partial
J(u),\sum_{i \in I_{1}}X_{i}(u) \Big \rangle & \leq &-c_{\delta}
\biggr(\sum_{i \in J} \dis \frac{1}{\lambda_{i}^{\beta_{i}}}+  \sum_{i
\in J }\dis \frac{|\nabla K(a_{i}) |}{\lambda_{i}}+\sum_{i\in I_{1}\cap L_{2}}\dis \frac{|(a_{i}-y_{l_{i}})|^{\beta_{i}-1}}{\lambda_{i}}\biggr) \\ &+ &O \Big(
\sum_{i \neq j,\; i \in I_{1}} \Big | \dis \frac{1}{\lambda_{i}}
\dis \frac{\partial \varepsilon_{i j}}{\partial a_{i}}\;
\Big| \Big)+o \biggr(  \dis
\sum_{i=1}^{p}  \dis \frac{1}{\lambda_{i}^{\beta_{i}}}\biggl).
\end{eqnarray*}
Let $\widetilde{C}=\Big\{(i,j)\mbox{ s.t } \gamma \leq
\frac{\lambda_{i}}{\lambda_{j}} \leq \frac{1}{\gamma} \Big\},$  where $\gamma$ is a small positive constant. Observe that
$$  \Big | \dis
\frac{1}{\lambda_{i}} \dis \frac{\partial \varepsilon_{i
j}}{\partial a_{i}}\; \Big|= o (\varepsilon_{ij}), \,\forall\; i\neq j\,\in \widetilde{C}. $$
This with  \eqref{eq4.1} yields
\begin{eqnarray}
\label{33} \Big \langle \partial J(u),\sum_{i \in I_{1}}X_{i}(u)
\Big \rangle &\leq& -c_{\delta}
\biggr(\sum_{i \in J} \dis \frac{1}{\lambda_{i}^{\beta_{i}}}+  \sum_{i
\in J }\dis \frac{|\nabla K(a_{i}) |}{\lambda_{i}}+\sum_{i\in I_{1}\cap L_{2}}\dis \frac{|(a_{i}-y_{l_{i}})|^{\beta_{i}-1}}{\lambda_{i}}\biggr)
 \\\nonumber  &+& o \Big(
\dis\sum_{k=1}^{q}\sum_{i \neq j\in B_{k}\cap \widetilde{C},\; i
\in I_{1}} \varepsilon_{i j} \Big)
 + O  \Big( \dis\sum_{k=1}^{q}\sum_{i \neq j\in
B_{k}, (i,j)\not\in \widetilde{C},\; i \in I_{1}} \varepsilon_{i j}
\Big)+o \Big(\dis\sum_{i=1}^{p}
\frac{1}{\lambda_{i}^{\beta_{i}}} \Big).
\end{eqnarray}
For any  $k=1,...,q$, let  $\lambda_{i_{k}}= \min\{\lambda_{i}, i
\in B_{k} \}$. Define
$$\overline {Z}= -\sum_{k=1}^{q}\sum_{ j\in B_{k}, (i_{k}, j) \notin\widetilde{C}} \overline{\chi}(\lambda_{j})
Z_{j}-\gamma_{1} \sum_{k=1}^{q}\sum_{ j\in B_{k}, (i_{k}, j)
\in\widetilde{C}}
\overline{\chi}(\lambda_{j}) Z_{j},$$ where  $\gamma_{1}$ is a  small positive constant. Using Lemma \ref{19}, we find that
\begin{eqnarray}
\nonumber \Big \langle \partial J(u),\overline {Z}(u)
\Big \rangle &\leq& c\dis\sum_{k=1}^{q}\sum_{i \neq j, j\in B_{k}, (j,i_{k})\not\in \widetilde{C}}\overline{\chi}\;(\lambda_{j})\lambda_{j} \dis\frac{\partial\varepsilon_{i j}}{\partial\lambda_{j}}
 +c\gamma_{1}\dis\sum_{k=1}^{q}\sum_{ j\in B_{k}, (j,i_{k})\in \widetilde{C}, i \neq j}\overline{\chi}\;(\lambda_{j})\\
 \nonumber
&\times& \lambda_{j} \dis\frac{\partial\varepsilon_{i j}}{\partial\lambda_{j}}+O\biggr( \dis\sum_{k=1}^{q}\sum_{ j\in B_{k}, (j,i_{k})\not\in \widetilde{C}}
\biggr(\dis\frac{1}{\lambda_{j}^{\beta_{j}}} +\dis\frac{|(a_{j}-y_{l_{j}})|^{\beta_{j}-2}}{\lambda_{j}^{2}},\mbox{if}\; j\in L_{2} \biggr)\biggr)\\
\nonumber
&+& \gamma_{1}O \biggr( \dis\sum_{k=1}^{q}\sum_{ j\in B_{k}, (j,i_{k})\in \widetilde{C}}
\biggr(\dis\frac{1}{\lambda_{j}^{\beta_{j}}} +\dis\frac{|(a_{j}-y_{l_{j}})|^{\beta_{j}-2}}{\lambda_{j}^{2}},\mbox{if}\; j\in L_{2} \biggr)\biggr).
\end{eqnarray}
Observe that by using a direct calculation, we have
\begin{eqnarray}\label{pp}
\dis\lambda_{i}\frac{\partial\varepsilon_{ij}}{\partial\lambda_{i}}\leq-c\; \varepsilon_{ij},\; \mbox{if} \; \dis\lambda_{i}\geq\lambda_{j}
\;\mbox{or}\;  \dis\lambda_{i}\sim\lambda_{j}\; \mbox{or}\; |a_{i}-a_{j}|\geq\delta_{0}>0.
\end{eqnarray}
Let $j\in B_{k}$, $1\leq k\leq q$ and let $i$, $1\leq i\leq p$ such that $i \neq j$. If  $i\not\in B_{k}$ or  $i\in B_{k}$, with $(i, j)\in \widetilde{C}$,
then we have by \eqref{pp}
$$ \lambda_{i}\frac{\partial\varepsilon_{i j}}{\partial \lambda_{i}} \leq -c\; \varepsilon_{ij}\mbox{ and}\; \lambda_{j}\frac{\partial\varepsilon_{i j}}{\partial \lambda_{j}} \leq -c \;\varepsilon_{ij}.$$
In the case where $i\in B_{k}$ with $(i, j)\not\in \widetilde{C}$, (assuming that $\lambda_{i}<<\lambda_{j}$), we have $\overline{\chi}(\lambda_{j}) - \overline{\chi}(\lambda_{i}) \geq 1
    $. Thus,
$$ \overline{\chi}\;(\lambda_{j})\;\lambda_{j} \dis \frac{\partial
    \varepsilon_{i j}}{\partial\lambda_{j}}+ \overline{\chi}\;(\lambda_{i})\;\lambda_{i} \dis \frac{\partial
    \varepsilon_{i j}}{\partial\lambda_{i}} \leq \lambda_{j} \dis \frac{\partial
    \varepsilon_{i j}}{\partial\lambda_{j}} \leq -c\; \varepsilon_{i j}. $$
We therefore have
\begin{eqnarray}
\nonumber \Big \langle \partial J(u),\overline {Z}(u)
\Big \rangle &\leq& -c\biggr(\dis\sum_{k=1}^{q}\sum_{i \neq j, j\in B_{k}, (j,i_{k})\not\in \widetilde{C}}\varepsilon_{i j}
 +\gamma_{1}\dis\sum_{k=1}^{q}\sum_{i\neq j, j\in B_{k}, (j,i_{k})\in \widetilde{C}}\varepsilon_{i j}\biggr)\\
 \nonumber&+&O\biggr( \dis\sum_{k=1}^{q}\sum_{ j\in B_{k}, (j,i_{k})\not\in \widetilde{C}}
\biggr(\dis\frac{1}{\lambda_{j}^{\beta_{j}}} +\dis\frac{|(a_{j}-y_{l_{j}})|^{\beta_{j}-2}}{\lambda_{j}^{2}},\mbox{if}\; j\in L_{2} \biggr)\biggr)\\
\label{34}
&+& \gamma_{1}O \biggr( \dis\sum_{k=1}^{q}\sum_{ j\in B_{k}, (j,i_{k})\in \widetilde{C}}
\biggr(\dis\frac{1}{\lambda_{j}^{\beta_{j}}} +\dis\frac{|(a_{j}-y_{l_{j}})|^{\beta_{j}-2}}{\lambda_{j}^{2}},\mbox{if}\; j\in L_{2} \biggr)\biggr).
\end{eqnarray}
Observe that if $j \in B_{k}$ with $(j,i_{k})\in \widetilde{C}$, we have $j$ or $i_{k}\in I_{1}$. Thus for $M_{1}$ large enough, and $\gamma_{1}$ very small, we
obtain from  \eqref{33} and \eqref{34}
\begin{eqnarray}
\nonumber \Big \langle \partial J(u), \sum_{i \in I_{1}}X_{i}+M_{1}\overline {Z}(u) \Big \rangle  &\leq&
-c\biggr(\sum_{i \in J} \dis
\frac{1}{\lambda_{i}^{\beta_{i}}}+ \sum_{i \in J}\dis
\frac{|\nabla K(a_{i}) |}{\lambda_{i}} +\sum_{k=1}^{q}\;\;
\sum_{i\neq j, j\in B_{k}}\varepsilon_{ij}\biggr)  \\
\label{a4} &+&O\Big(
\sum_{k=1}^{q}\;\;\sum_{ j\in B_{k}, (i_{k}, j)
\notin\widetilde{C}} \frac{1}{\lambda_{j}^{\beta_{j}}} \Big),
\end{eqnarray}
\begin{equation}\label{ab}
\mbox{ since  } \hskip1.5cm\dis
\frac{|(a_{i}-y_{l_{i}})_{k_i} |^{\beta_{i}-2}}{ \lambda_{i}^{2}}=o \Big( \dis
\frac{|(a_{i}-y_{l_{i}})_{k_i} |^{\beta_{i}-1}}{ \lambda_{i}} \Big),\, \mbox{for any } i\in L_2,\, \mbox{as } M_1 \mbox{ large enough}.\hskip2.3cm
\end{equation}
Now, let in this region
$$ W_{1}^{4}:=M_{1}\biggr( \sum_{i \in I_{1}}X_{i}+ M_{1}\overline {Z}\biggr) +\dis \sum_{i \notin J} (-\sum_{k=1}^{n} b_{k}) Z_{i}. $$
We obtain from the above estimates
$$\Big \langle \partial J(u),W_{1}^{4}(u) \Big \rangle  \leq -c
\biggr( \dis \sum_{i=1}^{p} \dis \frac{1}{\lambda_{i}^{\beta_{i}}}+
\dis \sum_{i=1}^{p} \dis \frac{|\nabla K(a_{i}) |}{\lambda_{i}} +
\dis \sum_{i \neq j} \varepsilon_{i j} \biggl).$$

\noindent\underline{case 2.} $I_{1} = \emptyset$, we order the $\lambda_{i}$'s in an increasing order, for sake of simplicity, we can assume that $\lambda_{1}\leq...\leq \lambda_{p}$.
Let  \\
$$I_{2}=\{ 1 \} \cup \{ i, 1\leq i\leq p \; s.t\; \lambda_{i}\sim
\lambda_{1} \}.$$
We write $u$ as follows $$u= \dis \sum_{i \in
I_{2}}\alpha_{i}\delta_{i}+ \dis \sum_{i \notin
I_{2}}\alpha_{i}\delta_{i}:=u_{1}+u_{2}.$$
Observe that, $\forall i \neq j \in I_{2}$ such that $i\neq j$ we
have $|a_{i}-a_{j}|\geq \delta$. Indeed, if $|a_{i}-a_{j}|<\delta$, so $i,j\in B_{k}$, we get $|a_{i}-a_{j}|\leq|a_{i}-y_{l_{i}}|+|a_{j}-y_{l_{i}}|\leq\dis\frac{2\delta}{\lambda_{i}},$
since $I_{1}=\emptyset$ and $\lambda_{i}\sim\lambda_{j}$ $\forall\; i,j \in I_{2}$. This implies that
$$\biggr(\displaystyle
\frac{\displaystyle\lambda_{i}}{\displaystyle\lambda_{j}}+\displaystyle
\frac{\displaystyle\lambda_{j}}{\displaystyle\lambda_{i}}+\displaystyle
\displaystyle\lambda_{i}\displaystyle\lambda_{j}|a_{i}-a_{j}|^{2}\biggr)^{\frac{n-2\sigma}{2}}\leq c_{1},$$
and hence $\varepsilon_{ij}\geq c$ which is a contradiction. Thus  $u_{1}\in
V_{1}^{j}(\sharp I_{2}, \varepsilon)$, $j=1$ or $2$ or $3$. Apply the associated pseudo-gradient denoted by $\overline W$, we obtain
$$\Big \langle \partial J(u),\overline W(u) \Big \rangle  \leq -c \biggl(\dis \sum_{i \in I_{2}}\dis
\frac{1}{\lambda_{i}^{\beta_{i}}}+  \dis \sum_{i\in  I_{2}} \dis
\frac{|\nabla K(a_{i}) |}{\lambda_{i}} + \dis \sum_{i \neq j,\;
i,j\in I_{2} } \varepsilon_{i j} \biggr)+O \Big(\sum_{i \in
I_{2},j\notin I_{2} } \varepsilon_{i j} \Big).$$
$$ \mbox{Let,} \hskip3cm J_{2} =\{i, 1\leq i\leq p , \; \lambda_{i}^{\beta_{i}}\geq  \dis \min_{j\in I_{2}} \lambda_{j}^{\beta_{j}}  \}.\hskip8.3cm $$
We can add to the above estimates all indices $i$ such that
$ i \in J_{2}$. So using the estimate \eqref{11} we obtain
$$
\nonumber \Big \langle \partial J(u),\overline{W}(u) \Big
\rangle   -c\biggl(\dis \sum_{i \in J_{2}}\dis
\frac{1}{\lambda_{i}^{\beta_{i}}}+  \dis \sum_{i\in  J_{2}} \dis
\frac{|\nabla K(a_{i}) |}{\lambda_{i}} +\dis \sum_{i \neq j,\;
i,j\in I_{2} } \varepsilon_{i j}\biggr)$$$$+ o \Big(\dis \sum_{i=1}^{p}\dis
\frac{1}{\lambda_{i}^{\beta_{i}}} \Big) +O \Big(\sum_{i \in
I_{2},j\notin I_{2}, i,j \in B_{k} } \varepsilon_{i j} \Big).
$$
Let $M_{1}>0$ large enough, the above estimate and \eqref{34} yields
\begin{eqnarray}
\nonumber \Big \langle \partial J(u),M_{1}\overline{Z}(u) +\overline{W}(u) \Big
\rangle &\leq& -c\biggr(\dis \sum_{i \in J_{2}}\dis
\frac{1}{\lambda_{i}^{\beta_{i}}}+ \dis \sum_{i\in J_{2}} \dis
\frac{|\nabla K(a_{i}) |}{\lambda_{i}} + \dis
\sum_{k=1}^{q}\;\; \sum_{i \neq j \in B_{k} } \varepsilon_{i j}\\
&+& \dis \sum_{i \neq j,\; i,j\in I_{2} }
\varepsilon_{i j}\biggr)  +O \Big(\dis
\sum_{k=1}^{q}\;\; \sum_{i\in B_{k},(i_{k},i)\notin \widetilde{C}
}\frac{1}{\lambda_{i}^{\beta_{i}}}\Big).
\end{eqnarray}
From another part, by $(iii)$ of proposition \ref{3.1} and \eqref{11}, we have
\begin{eqnarray}\label{38}
\Big \langle \partial J(u),\dis \sum_{i \notin J_{2}}
(-\sum_{k=1}^{n} b_{k}) Z_{i}(u) \Big \rangle  &\leq& -c\biggr(
\sum_{i \notin J_{2}} \dis \frac{1}{\lambda_{i}^{\beta_{i}}}+
\sum_{i \notin J_{2}}  \dis \frac{|\nabla K(a_{i}) |}{\lambda_{i}}\biggr)\end{eqnarray} $$+O\Big( \sum_{k=1}^{q}\;\; \sum_{i\neq j\in
B_{k},\;i \notin J_{2} }\varepsilon_{ij}\Big)
+o
\Big(\dis\sum_{i=1}^{p} \frac{1}{\lambda_{i}^{\beta_{i}}}
\Big).$$

$$\mbox{Define }\hskip3.1cm W_{1}^{4}(u)=M_{1}\biggr(M_{1}\overline{Z}(u) +\overline{W}(u)\biggr)+\dis \sum_{i
\notin J_{2}} (-\sum_{k=1}^{n} b_{k}) Z_{i}(u).\hskip5cm$$
Using \eqref{38}, we  get
$$\Big \langle \partial J(u),W_{1}^{4}(u) \Big \rangle  \leq -c
\biggr( \dis \sum_{i=1}^{p} \dis
\frac{1}{\lambda_{i}^{\beta_{i}}}+ \dis \sum_{i=1}^{p} \dis
\frac{|\nabla K(a_{i}) |}{\lambda_{i}} + \dis \sum_{i \neq j}
\varepsilon_{i j} \biggl),$$
since $\dis \frac{1}{\lambda_{i}^{\beta_{i}}}=o\biggr(\dis\frac{1}{\lambda_{i_{k}}^{\beta_{i_{k}}}}\biggr)$ $\forall \; i\in B_{k}$ such that
$(i,i_{k})\not\in \widetilde{C}.$\\
The vector field $W_{1}$ in $V_{1}(p,\varepsilon)$ will be a convex combination of $W_{1}^{j}, j=1,...,4.$ From the definitions of $W_{1}^{j}, j=1,...,4$ the
only case where the maximum of the $\lambda_{i}$'s increase is when $a_{i}\in B(y_{l_{i}}, \rho)$,  $y_{l_{i}}\in \mathcal{K}^{+},$\;
$\forall\;i=1,...,p$,  with $y_{l_{i}}\neq y_{l_{j}}, \forall \; i \neq j.$ This conclude the proof of proposition \ref{3.2}.

\vskip0.2cm
\noindent\textbf{\textbf{Proof of proposition \ref{3.1}.}}
We divide the set
$V_{2}(p,\varepsilon)$ into five sets.
\begin{eqnarray*}
V^{1}_{2}(p,\varepsilon)&= &\Big\{u=\dis \sum_{i=1}^{p}\alpha_{i}
\delta_{a_{i} \lambda_{i}}\in V_{2}(p,\varepsilon),\;
y_{l_{i}}\neq y_{l_{j}} \; \forall i \neq j,\;  - \dis
\sum_{k=1}^{n}b_{k}(y_{l_{i}}) > 0,\;
 \\
&& \lambda_{i}|a_{i}-y_{l_{i}}|< \delta,\; \forall i=1,...,p
\mbox{ and } \rho(y_{l_{i}},...,y_{l_{p}})> 0 \Big\}.
\end{eqnarray*}
\begin{eqnarray*}
V^{2}_{2}(p,\varepsilon)&= &\Big\{u=\dis \sum_{i=1}^{p}\alpha_{i}
\delta_{a_{i} \lambda_{i}}\in V_{2}(p,\varepsilon),\;
y_{l_{i}}\neq y_{l_{j}} \; \forall i \neq j,\;  - \dis
\sum_{k=1}^{n}b_{k}(y_{l_{i}}) > 0,\;
 \\
&& \lambda_{i}|a_{i}-y_{l_{i}}|< \delta,\; \forall i=1,...,p
\mbox{ and } \rho(y_{l_{i}},...,y_{l_{p}})< 0 \Big\}.
\end{eqnarray*}
\begin{eqnarray*}
V^{3}_{2}(p,\varepsilon)&= &\Big\{u=\dis \sum_{i=1}^{p}\alpha_{i}
\delta_{a_{i} \lambda_{i}}\in V_{2}(p,\varepsilon),\;
y_{l_{i}}\neq y_{l_{j}} \; \forall i \neq
j,\;\lambda_{i}|a_{i}-y_{l_{i}}|< \delta,
 \\
&& \forall i=1,...,p, \mbox{and there exist } j \mbox{ (at least)
such that }- \dis \sum_{k=1}^{n}b_{k}(y_{l_{j}}) < 0 \Big\}.
\end{eqnarray*}
\begin{eqnarray*}
V^{4}_{2}(p,\varepsilon)&= &\Big\{u=\dis \sum_{i=1}^{p}\alpha_{i}
\delta_{a_{i} \lambda_{i}}\in V_{2}(p,\varepsilon),\;
y_{l_{i}}\neq y_{l_{j}} \; \forall i \neq j,
 \mbox{and there exist } j \mbox{ (at least)}\\
&& \mbox{such that }\lambda_{j}|a_{j}-y_{l_{j}}|\geq\dis
\frac{\delta}{2} \Big\}.
\end{eqnarray*}
\begin{eqnarray*}
V^{5}_{2}(p,\varepsilon)&= &\Big\{u=\dis \sum_{i=1}^{p}\alpha_{i}
\delta_{a_{i} \lambda_{i}}\in V_{2}(p,\varepsilon),\;
 \mbox{such that there exist } i\neq j
  \mbox{ satisfying }\\
  &&y_{l_{i}}= y_{l_{j}} \Big\}.
\end{eqnarray*}
We break up the proof into five steps. We construct an appropriate pseudo-gradient in each region and then glue up via convex combinations. Let $Z_1$ and $Z_2$ be two vector fields. A convex combination of $Z_1$ and $Z_2$ is given by $ \theta Z_1+ (1-\theta) Z_2$ where $\theta$ is cut-off function.\\

\noindent\underline{Step 1}: First, we
consider the case of $u= \dis \sum_{i=1}^{p}\alpha_{i}
\delta_{a_{i} \lambda_{i}}\in V_{2}^{1}(p,\varepsilon)$, we have
for any $i \neq j,\;|a_{i}-a_{j} |> \rho $ and therefore,
\begin{eqnarray*}
  \varepsilon_{i j} &=& \Big( \dis \frac{2}{(1- \cos d(a_{i},a_{j})) \lambda_{i}\lambda_{j}}  \Big)^{\frac{n-2\sigma}{2}}
  (1+o(1))\\
  &=& 2^{\frac{n-2\sigma}{2}} \dis \frac{G(a_{i},a_{j})}{(\lambda_{i}
  \lambda_{j})^{\frac{n-2\sigma}{2}}}(1+o(1) ).
\end{eqnarray*}
Where $G(a_{i},a_{j})$ is defined in (\ref{G}). Thus,
$$\lambda_{i}\dis \frac{\partial
\varepsilon_{i j}}{\partial \lambda_{i}} = -\dis
\frac{n-2\sigma}{2}2^{\frac{n-2\sigma}{2}} \dis
\frac{G(a_{i},a_{j})}{(\lambda_{i}
  \lambda_{j})^{\frac{n-2\sigma}{2}}}(1+o(1)).$$
Using proposition \ref{p3.2} with $\beta=n-2\sigma$ and the fact that
$\alpha_{i}^{\frac{4\sigma}{n-2\sigma}}
K(a_{i})J(u)^{\frac{n}{n-2\sigma}}=1+o(1)\; \forall i=1,...,p.$, we derive that
\begin{eqnarray*}
  \Big\langle \partial J(u), \alpha_{i} \lambda_{i} \dis \frac{\partial \delta_{i}}{\partial \lambda_{i}}
   \Big\rangle &=& \dis \frac{n-2\sigma}{2} \;J(u)^{1-\frac{n}{2}} \biggr[ \dis \frac{n-2\sigma}{n}\;
   \widetilde{c}_{1}\;
   \dis \frac{\sum_{i=1}^{p} b_{k}}{K(a_{i})^{\frac{n}{2\sigma}}} \;\dis\frac{1}{\lambda_{i}^{n-2\sigma}}\\
  &+&{c}_{1}\; 2^{\frac{n-2\sigma}{2}} \;\dis \sum_{i\neq j}\dis
  \frac{G(y_{l_{i}},y_{l_{j}})}{\Big( K(a_{i}) K(a_{j})\Big)^{\frac{n-2\sigma}{4\sigma}}}\frac{1}{
  \Big( \lambda_{i} \lambda_{j}
  \Big)^{\frac{n-2\sigma}{2}}}\biggl]\\
  \nonumber &+& o\biggr(\dis \sum_{i=1}^{p} \dis\frac{1}{\lambda_{i}^{n-2\sigma}}
 +\dis \sum_{i \neq j} \varepsilon_{i
j}\biggl) .
\end{eqnarray*}
Where $\widetilde{c}_{1}= c_{0}^{\frac{2n}{n-2\sigma}} \dis \int_{\dis
\mathbb{R}^{n}} \dis \frac{|(x_{1})|^{n-2\sigma}}{(1+|x |^{2})^{n}} dx
.$ Hence, using the fact that $\big|a_{i}-y_{l_{i}} \big|<\delta$,
$\delta$ very small, we get, \begin{eqnarray*}
 \Big \langle \partial J(u), \dis \sum_{i=1}^{p} \alpha_{i} Z_{i} \Big\rangle &\leq& -c \;\; ^{t} \Lambda \;M(y_{l_{1}},...,y_{l_{p}} ) \Lambda+o\biggr(\dis \sum_{i=1}^{p} \dis\frac{1}{\lambda_{i}^{n-2\sigma}}
 +\dis \sum_{i \neq j} \varepsilon_{i
j}\biggl) \\
  &\leq& - c \;\rho(y_{l_{1}},..., y_{l_{p}})\; |\Lambda |^{2}+o\biggr(\dis \sum_{i=1}^{p} \dis\frac{1}{\lambda_{i}^{n-2\sigma}}
 +\dis \sum_{i \neq j} \varepsilon_{i
j}\biggl),
\end{eqnarray*}
where $\Lambda= \; ^{t}\Big(
\frac{1}{\lambda_{1}^{\frac{n-2\sigma}{2}}},\ldots ,
\frac{1}{\lambda_{p}^{\frac{n-2\sigma}{2}}}  \Big)$. Here
$M(y_{l_{1}},..., y_{l_{p}})$ is defined in \eqref{4} and
$\rho(y_{l_{1}},...,y_{l_{p}})$ is the least eigenvalue of
$M(y_{l_{1}},...,y_{l_{p}})$. Using the fact that $\forall i \neq
j$, we have $\varepsilon_{i j} \leq \dis \frac{c}{(\lambda_{i}
\lambda_{j})^{\frac{n-2\sigma}{2}}}$, since $|a_{i}-a_{j} | \geq
\delta$, we then obtain

$$ \Big \langle \partial J(u), \dis \sum_{i=1}^{p} \alpha_{i} Z_{i}   \Big \rangle \leq -c \biggr(\dis \sum_{i=1}^{p} \dis\frac{1}{\lambda_{i}^{n-2\sigma}}
 +\dis \sum_{i \neq j} \varepsilon_{i
j}\biggl) .$$ In addition, $\forall i=1,...,p$, we have
$\lambda_{i} |a_{i}|< \delta  \; \Longrightarrow \; \dis
\frac{|\nabla K(a_{i}) |}{\lambda_{i}} \sim \dis
\frac{|(a_{i})_{k} |^{\beta-1}}{ \lambda_{i}} \leq \dis
\frac{c}{\lambda_{i}^{\beta}}$. Thus, we derive for $W^{1}_{2}:=
\dis \sum_{i=1}^{p} \alpha_{i} Z_{i}$
$$ \Big \langle \partial J(u), W^{1}_{2}  \Big \rangle \leq -c \biggr( \dis \sum_{i=1}^{p} \dis \frac{1}{\lambda_{i}^{n-2\sigma}}+
\dis \sum_{i=1}^{p} \dis \frac{|\nabla K(a_{i}) |}{\lambda_{i}} +
\dis \sum_{i \neq j} \varepsilon_{i j} \biggl). $$ \underline{Step
2}: Secondly, we study the case of $u= \dis \sum_{i=1}^{p}
\alpha_{i} \delta_{a_{i}\lambda_{i}} \in V_{2}^{2}(p,\varepsilon)
$. Let,\\ $e=(e_{i})_{i=1,...,p}$ an eigenvector associated to
$\rho(y_{l_{1}},...,y_{l_{p}})$ such that $|e |=1$ with $e_{i} >
0$ $ \forall i=1,...,p$. Let $\gamma
>0$ such that for any $x \in B(e,\gamma)= \{ y \in S^{p-1}\mbox{ s.t } |y-e |\leq \gamma
\}$, we have

$$^{t}x M(y_{l_{1}},...,y_{l_{p}})x \leq \dis
\frac{1}{2} \rho(y_{l_{1}},...,y_{l_{p}}).$$ Two cases may occur.\\

\noindent\underline{case 1:} $\dis \frac{\Lambda}{|\Lambda |} \in
B(e,\gamma )$, where $\Lambda =\; ^{t}\Big( \dis
\frac{1}{\lambda_{1}^{\frac{n-2\sigma}{2}}},\ldots , \dis
\frac{1}{\lambda_{p}^{\frac{n-2\sigma}{2}}} \Big) $. In this case, we
define $W_{2}^{2}= - \dis \sum_{i=1}^{p} \alpha_{i} Z_{i} $. As in
step 1, we find that,
$$ \Big \langle \partial J(u), W_{2}^{2}(u) \Big \rangle \leq -c \biggr( \dis \sum_{i=1}^{p} \dis \frac{1}{\lambda_{i}^{n-2\sigma}}
+ \dis \sum_{i=1}^{p} \dis \frac{|\nabla K(a_{i}) |}{\lambda_{i}}
+ \dis \sum_{i \neq j} \varepsilon_{i j} \biggl) .$$
\underline{case 2:} $\dis \frac{\Lambda}{|\Lambda |} \notin
B(e,\gamma )$. In this case, we define
$$ W_{2}^{2}= - \dis \frac{2}{n-2\sigma} |\Lambda | \dis \sum_{i=1}^{p} \alpha_{i} \lambda_{i}^{\frac{n}{2}} \Big[ \dis \frac{|\Lambda|\; e_{i}- \Lambda_{i}}
{| \Lambda |}- \dis \frac{\Lambda_{i} < |\Lambda|\;e - \Lambda,
\Lambda  > }{|\Lambda |^{3}} \Big]\dis \frac{\partial
\delta_{a_{i} \lambda_{i}}}{\partial \lambda_{i}}. $$ Using
proposition \ref{p3.2}, we find that
$$\Big \langle \partial J(u), W_{2}^{2}(u)   \Big \rangle = - c |\Lambda |^{2}  \dis \frac{\partial}{\partial t}\Big(\; ^{t}\Lambda(t) M\;
\Lambda (t) \Big)_{\dis /_{t=0}} + o\Big( \dis \sum_{i=1}^{p} \dis
\frac{1}{\lambda_{i}^{n-4}}\Big)+o\biggr(\dis \sum_{i \neq j}
\varepsilon_{i j}\biggl) .$$ Where $M= M(y_{l_{1}},...,
y_{l_{p}})$ and $\Lambda(t)= \dis \frac{(1-t) \Lambda + t |\Lambda
| e}{\Big|(1-t) \Lambda + t |\Lambda | e  \Big|} \Lambda $.
Observe that,
$$\; ^{t}\Lambda(t) M \Lambda(t)= \rho + \dis \frac{(1-t)^{2}}{\Big|(1-t) \Lambda + t |\Lambda | e   \Big|} \Big(\;  ^{t}\Lambda M \Lambda
- \rho | \Lambda|^{2}\Big).$$ Thus we obtain, $ \dis
\frac{\partial}{\partial t}\Big(\; ^{t}\Lambda(t) M\; \Lambda (t)
\Big)< -c $ and therefore we get,
$$ \Big \langle \partial J(u), W_{2}^{2}(u) \Big \rangle \leq -c \biggr( \dis \sum_{i=1}^{p} \dis \frac{1}{\lambda_{i}^{n-2\sigma}}
+ \dis \sum_{i=1}^{p} \dis \frac{|\nabla K(a_{i}) |}{\lambda_{i}}
+ \dis \sum_{i \neq j} \varepsilon_{i j} \biggl) .$$

\noindent\underline{Step 3}: Now, we deal with the case of $u= \dis
\sum_{i=1}^{p}\alpha_{i} \delta_{a_{i}\lambda_{i} } \in
V^{3}_{2}(p, \varepsilon) $.\\
Without loss of generality, we can assume that 1,...,q are the
indices which satisfy $- \dis  \sum_{k=1}^{n} b_{k}(y_{l_{i}})
<0\; \forall i=1,...,q$. Let,
$$\widetilde{W}_{2}^{1}= \dis
\sum_{i=1}^{q} - \alpha_{i} Z_{i}.$$  By proposition \ref{p3.2}
and \eqref{pp}, we obtain $$\big \langle
\partial J(u), \widetilde{W}_{2}^{1}(u) \Big\rangle  \leq -c \biggr(  \dis
\sum_{i=1}^{q}  \dis \frac{1}{\lambda_{i}^{n-2\sigma}}+ \dis \sum_{i
\neq j,\; 1\leq i\leq q} \varepsilon_{i j} \biggl) . $$ Set $$I=
\Big \{ i, 1 \leq i \leq p \;\mbox{ s.t }\; \lambda_{i} \leq \dis
\frac{1}{10} \dis \min_{1 \leq j\leq q} \lambda_{j}\Big \}.$$ It
is easy to see that, we can add to the above estimates all indices
$i$ such that $ i \notin I$. Thus
$$ \big \langle
\partial J(u), \widetilde{W}_{2}^{1}(u) \Big\rangle  \leq -c \biggr(  \dis
\sum_{i\notin I}  \dis \frac{1}{\lambda_{i}^{n-2\sigma}}+ \dis \sum_{i
\neq j,\; i \notin I} \varepsilon_{i j} \biggl) .$$ If $I \neq
\emptyset $, in this case, we write $u$ as follows
$$u= \dis
\sum_{i \in I} \alpha_{i} \delta_{a_{i} \lambda_{i}}+  \dis
\sum_{i \notin I} \alpha_{i} \delta_{a_{i} \lambda_{i}} =
u_{1}+u_{2}. $$ Observe that $u_{1}$ has to satisfy one of two
cases above that is $u_{1}\in V_{2}^{1}(\sharp I, \varepsilon)$ or
$u_{1}\in V_{2}^{2}(\sharp I, \varepsilon)$. Thus we can apply the
associated vector field which we will denote
$\widetilde{W}^{2}_{2}$. We then have
$$ \Big \langle \partial
J(u), \widetilde{W}^{2}_{2}(u) \Big \rangle  \leq -c \biggr(\dis
\sum_{i \in I}\dis \frac{1}{\lambda_{i}^{n-2\sigma}}+ \dis \sum_{i
\neq j,\; i \in I} \varepsilon_{i j}+\dis \sum_{i=1}^{p} \dis
\frac{|\nabla K(a_{i}) |}{\lambda_{i}}  \biggl) + O \biggr(\dis
\sum_{i \neq j,\; i \notin I} \varepsilon_{i j} \biggl).$$ Let in
this subset $W_{2}^{3}= \widetilde{W}^{1}_{2}+ m_{1}
\widetilde{W}^{2}_{2}$, $m_{1}$ be a small positive constant. We
get,
$$ \Big \langle \partial J(u), W_{2}^{3}(u) \Big \rangle \leq -c \biggr( \dis \sum_{i=1}^{p} \dis \frac{1}{\lambda_{i}^{n-2\sigma}}
+ \dis \sum_{i=1}^{p} \dis \frac{|\nabla K(a_{i}) |}{\lambda_{i}}
+ \dis \sum_{i \neq j} \varepsilon_{i j} \biggl) .$$

\noindent\underline{Step 4}: We consider her the case of $u= \dis
\sum_{i=1}^{p} \alpha_{i} \delta_{a_{i} \lambda_{i}} \in
V_{2}^{4}(p,\varepsilon). $\\
We order the $\lambda_{i}$'s in an increasing order, for sake of
simplicity, we can assume that
$\lambda_{1}\leq...\leq\lambda_{p}$. Let $\lambda_{i_{1}}= \inf \{
\lambda_{j} \mbox{ s.t } \lambda_{j}|a_{j} | \geq \delta\}$. For
$m_{1}>0$ small enough, we need to prove the following claim
$$ \Big \langle \partial J(u), (X_{i_{1}}- m_{1} Z_{i_{1}})(u)  \Big\rangle  \leq -c \biggr( \dis \sum_{i=i_{1}}^{p}
\dis \frac{1}{\lambda_{i}^{n-2\sigma}}+ \dis \sum_{j \neq i_{1}}
\varepsilon_{i_{1} j}  + \dis \sum_{i=1}^{p} \dis \frac{|\nabla
K(a_{i_{1}}) |}{\lambda_{i_{1}}} \biggl) .$$ Indeed, for $i \neq
j$, we have $| a_{i}-a_{j}|> \rho$, thus in proposition \ref{p3.3}
the term $\Big | \dis \frac{1}{\lambda_{i}} \dis \frac{\partial
\varepsilon_{i j}}{\partial(a_{i})_{k}}\; \Big|$ is very small
with respect $\varepsilon_{i j}$, hence,
\begin{eqnarray*}
\Big\langle \partial J(u), X_{i_{1}}(u) \Big\rangle &\leq& - \dis
\frac{c}{\lambda_{i_{1}}^{n-2\sigma}} \biggr( \dis
\int_{\dis\mathbb{R}^{n}} b_{k_{i_{1}}} \dis \frac{|x_{k_{i_{1}}}+
\lambda_{i_{1}}(a_{i_{1}})_{k_{i_{1}}} |^{\beta}}{\Big(1+
\lambda_{i_{1}}|(a_{i_{1}})_{k_{i_{1}}} |
\Big)^{\frac{\beta-1}{2}}}\dis \frac{x_{k_{i_{1}}}}{\Big( 1+
|x|^{2} \Big)^{n+1}}dx\biggl)^{2}\\&+& o\Big( \dis
\frac{1}{\lambda_{i_{1}}^{n-2\sigma}}
 + \dis \sum_{j \neq
i_{1}} \varepsilon_{i_{1j}} \Big).
\end{eqnarray*}
 If $i_{1}\in L_{1}$ in this
case $\delta \leq \lambda_{i_{1}}|a_{i_{1}} |\leq M_{1}$, using elementary calculation, we have
\begin{eqnarray}\label{73}
\biggl(\dis
\int_{\dis \mathbb{R}^{n} } b_{k_{i}} \dis \frac{|x_{k_{i}}+
\lambda_{i}(a_{1})_{k_{i}} |^{\beta}}{(
 1+ \lambda_{i} |(a_{1})_{k_{i}} | )^{\frac{\beta -1}{2}}} \dis \frac{x_{k_{i}}}{(1+
 |x|^{2})^{n}} dx  \biggr)^{2}\geq c >0.
 \end{eqnarray}
 Using
\eqref{73} , we get
\begin{equation}\label{X11}
 \Big \langle \partial J(u), X_{i_{1}}(u)  \Big\rangle  \leq -c \dis
\frac{1}{\lambda_{i_{1}}^{n-2\sigma}}+o\Big( \dis \sum_{j \neq i_{1}}
\varepsilon_{i_{1j}}  \Big) \leq -c \dis \sum_{i=i_{1}}^{p}\dis
\frac{1}{\lambda_{i}^{\beta}}+o\Big( \dis \sum_{j \neq i_{1}}
\varepsilon_{i_{1j} } \Big).
\end{equation}
From another part, we have by proposition \ref{p3.2} and
\eqref{pp},
\begin{equation}\label{X22}
\Big \langle \partial J(u), Z_{i_{1}}(u)  \Big\rangle  \leq -c
\dis \sum_{j \neq i_{1}} \varepsilon_{i_{1j}}+O \Big( \dis
\frac{1}{\lambda_{i_{1}}^{n-2\sigma}} \Big).
\end{equation}
Using \eqref{X11} and \eqref{X22} our claim
follows in this case.\\
If $i_{1}\in L_{2}$, using \eqref{eq4.1}, we find
\begin{eqnarray*}
\Big\langle \partial J(u), X_{i_{1}}(u) \Big\rangle &\leq& - c
\biggr( \dis \frac{1}{\lambda_{i_{1}}^{n-2\sigma}}+ \dis
\frac{|(a_{i_{1}})_{k_{i_{1}}}
|^{\beta-1}}{\lambda_{i_{1}}}\biggl)+o\Big( \dis \sum_{j \neq
i_{1}}
\varepsilon_{i_{1j}} \Big) \\
&\leq& -c \biggr(  \dis \sum_{i=i_{1}}^{p} \dis
\frac{1}{\lambda_{i}^{n-2\sigma}}+  \dis
\frac{|(a_{i_{1}})_{k_{i_{1}}} |^{\beta-1}}{\lambda_{i_{1}}}
\biggl)+o\Big( \dis \sum_{j \neq i_{1}} \varepsilon_{i_{1j}} \Big)
\end{eqnarray*}
and by proposition \ref{p3.2} and \eqref{eq4.1}, we have
$$ \Big\langle \partial J(u), - Z_{i_{1}}(u) \Big \rangle  \leq -c \dis \sum_{j \neq i_{1}}
\varepsilon_{i_{1j}} + O\Big(\dis \frac{|(a_{i_{1}})_{k_{i_{1}}}
|^{\beta-2}}{\lambda_{i_{1}}^{2}} \Big).$$ Now using \eqref{ab},
we obtain
\begin{eqnarray*}
  \Big\langle \partial J(u), (X_{i_{1}}-m_{1} Z_{i_{1}})(u) \Big \rangle  &\leq& -c \biggr( \dis \sum_{i=i_{1}}^{p} \dis
\frac{1}{\lambda_{i}^{n-2\sigma}} + \dis \sum_{j \neq i_{1}}
\varepsilon_{i_{1} j} +  \dis \frac{|(a_{i_{1}})_{k}
|^{\beta-1}}{\lambda_{i_{1}}} \biggl) \\
 &\leq&-c \biggr( \dis \sum_{i=i_{1}}^{p} \dis
\frac{1}{\lambda_{i}^{n-2\sigma}} + \dis \sum_{j \neq i_{1}}
\varepsilon_{i_{1} j} +  \dis \frac{|\nabla K(a_{i_{1}})
|}{\lambda_{i_{1}}} \biggl),
\end{eqnarray*}
since $|\nabla K(a_{i_{1}}) | \sim |(a_{i_{1}})_{k_{i}}
|^{\beta-1}$
hence our claim is valid.\\
 Now let, $$I=\Big\{ i, 1\leq i \leq p \;\; \mbox{ s.t }\; \lambda_{i}<
\dis \frac{1}{10} \lambda_{i_{1}} \Big\},$$ it is easy to see that
$$ \Big\langle \partial J(u), (X_{i_{1}}-m_{1} Z_{i_{1}})(u) \Big \rangle \leq -c \biggr( \dis \sum_{i\notin I} \dis
\frac{1}{\lambda_{i}^{n-2\sigma}} + \dis \sum_{j \neq i,\; i \notin I}
\varepsilon_{i j} +  \dis \frac{|\nabla K(a_{i_{1}})
|}{\lambda_{i_{1}}} \biggl).  $$ Furthermore,
using \eqref{eq4.1}, we have
$$\Big\langle \partial J(u), \Big(X_{i_{1}}-m_{1} Z_{i_{1}}+ \dis \sum_{i \notin I,\; i \in L_{2}}X_{i} \Big)(u) \Big \rangle \leq - c
\biggr( \dis \sum_{i\notin I} \dis \frac{1}{\lambda_{i}^{n-2\sigma}}+
\dis \sum_{i \notin I}\dis \frac{|\nabla K(a_{i}) |}{\lambda_{i}}
+ \dis \sum_{i \neq j,\; i \notin I} \varepsilon_{i j} \biggl)
$$
since for $i \notin I$ and $i \in L_{1}$ we have $\dis
\frac{|\nabla K(a_{i}) |}{\lambda_{i}} \leq \dis
\frac{c}{\lambda_{i}^{\beta}} .$\\
We need to add the remainder terms (if $I \neq \emptyset$). Let
$u_{1}= \dis \sum_{i \in I} \alpha_{i}\delta_{a_{i} \lambda_{i}}$,
$\forall i \in I$ we have $\lambda_{i} | a_{i}|<\delta$, thus
$u_{1}\in V_{2}^{j}(\sharp I, \varepsilon)$, $j=1$ or $2$ or $3$,
we can apply then the associated vector field which we will denote
$\widetilde{W}^{4}_{2}$. We then have
$$\Big\langle \partial J(u), \widetilde{W}^{4}_{2}\Big \rangle \leq -c \biggr( \dis \sum_{i \in I} \dis \frac{1}{\lambda_{i}^{n-2\sigma}}+
\dis \sum_{i \neq j,\; i,j \in I}\varepsilon_{i j} + \dis
\sum_{i\in I}  \frac{|\nabla K(a_{i}) |}{\lambda_{i}} \biggl)+ O
\Big(\dis \sum_{i \in I,\; j \notin I}\varepsilon_{i j}  \Big).$$
Let $W_{2}^{4}=X_{i_{1}}-m_{1}Z_{i_{1}} + \dis \sum_{i \notin I,\;
i\in L_{2}} X_{i} + m_{2}\widetilde{W}^{4}_{2} $, $m_{2}$ is
positive small enough, we get
$$\Big\langle \partial J(u), W^{4}_{2}(u)\Big \rangle \leq -c \biggr( \dis \sum_{i=1}^{p} \dis \frac{1}{\lambda_{i}^{n-2\sigma}}+
\dis \sum_{i=1}^{p}  \frac{|\nabla K(a_{i}) |}{\lambda_{i}} +\dis
\sum_{i\neq j}\varepsilon_{i j} \biggl).$$

\noindent\underline{Step 5}: We study now the case of $u= \dis
\sum_{i=1}^{p} \alpha_{i} \delta_{a_{i} \lambda_{i}} \in
V_{2}^{5}(p,\varepsilon).$\\
Let, $$B_{k}=\{j,\; 1\leq j\leq p\; \mbox{ s.t }\; a_{j} \in
B(y_{l_{k}},\rho) \}.$$ In this case, there is at least one
$B_{k}$ which contains at least two indices. Without loss of
generality, we can assume that $1,...,q$ are the indices such that
the set $B_{k}$, $1\leq k \leq q$ contains at least two indices.
We will decrease the $\lambda_{i}$'s for $i \in B_{k}$ with
different speed. For this purpose, let
$$\begin{array}{ccc}
  \chi : \mathbb{R}& \longrightarrow & \mathbb{R}^{+} \\
   t& \longmapsto &  \left\{%
\begin{array}{ll}
    0 & \hbox{  if    }| t|\;\leq \gamma'  \\\\
    1 & \hbox{  if    } | t|\;\geq 1. \\
\end{array}%
\right.
\end{array}$$
Where $\gamma'$ is a small constant. \\
For $j\in B_{k}$, set $\overline{\chi}\;(\lambda_{j})= \dis
\sum_{i \neq j,\; i \in B_{k}} \chi \; \Big(\dis
\frac{\lambda_{j}}{\lambda_{i}}\Big) $. Define
$$\widetilde{W}_{2}^{5}= - \dis \sum_{k=1}^{q}\; \dis \sum_{j \in B_{k}} \alpha_{j}\; \overline{\chi }\;(\lambda_{j})\; Z_{j}.$$
Using proposition \ref{p3.2} and \eqref{eq4.1}, we obtain
\begin{eqnarray*}
  \Big \langle \partial J(u),\widetilde{W}_{2}^{5}(u)   \Big \rangle & \leq & c \dis \sum_{k=1}^{q}
  \biggr[ \dis \sum_{i \neq j,\; j \in B_{k}}\overline{\chi}\;(\lambda_{j})  \lambda_{j} \dis \frac{\partial
    \varepsilon_{i j}}{\partial\lambda_{j}}+ \dis \sum_{ j \in B_{k},\; j\in L_{1}}\overline{\chi}\;(\lambda_{j})
    O\Big( \dis \frac{1}{\lambda_{j}^{n-2\sigma}} \Big)   \\
   &+&\dis \sum_{ j \in B_{k},\; j\in L_{2}}\overline{\chi}\;(\lambda_{j})
    O\Big(\dis \frac{|(a_{j})_{k_{i}}|^{\beta-2}}{\lambda_{j}^{2}}\Big)\biggl].
\end{eqnarray*}
For $j\in B_{k}$, with $k\leq q$, if
$\overline{\chi}\;(\lambda_{j})\neq 0 $, then there exists $i \in
B_{k}$ such that $\dis \frac{1}{\lambda_{j}^{n-2\sigma}}=
o(\varepsilon_{i j})$ (for $\rho$ small enough). Furthermore, for
$j \in B_{k}$, if $i \notin B_{k}$ (or $i \in B_{k}$ with
$\lambda_{i} \sim \lambda_{j}$), then we have  by \eqref{pp},

$$\lambda_{j} \dis \frac{\partial
    \varepsilon_{i j}}{\partial\lambda_{j}} \leq -c \;\varepsilon_{i
    j}\hbox{ and } \lambda_{i} \dis \frac{\partial
    \varepsilon_{i j}}{\partial\lambda_{i}} \leq -c \;\varepsilon_{i
    j}.$$ In the case where $i \in B_{k}$ with (assuming $\lambda_{i} <<
    \lambda_{j}$), we have $\overline{\chi}(\lambda_{j}) - \overline{\chi}(\lambda_{i}) \geq 1
    $. Thus
$$ \overline{\chi}\;(\lambda_{j})\;\lambda_{j} \dis \frac{\partial
    \varepsilon_{i j}}{\partial\lambda_{j}}+ \overline{\chi}\;(\lambda_{i})\;\lambda_{i} \dis \frac{\partial
    \varepsilon_{i j}}{\partial\lambda_{i}} \leq \lambda_{j} \dis \frac{\partial
    \varepsilon_{i j}}{\partial\lambda_{j}} \leq -c\; \varepsilon_{i j}. $$
Thus we obtain
\begin{eqnarray}
\nonumber \Big \langle \partial J(u),\widetilde{W}_{2}^{5}(u)
   \Big \rangle & \leq & - c \dis \sum_{k=1}^{q}  \; \dis \sum_{j \in B_{k}}\overline{\chi}\;(\lambda_{j})\Big(
   \dis \sum_{i \neq j}\varepsilon_{i j} + \dis \frac{1}{\lambda_{j}^{n-2\sigma}} \Big) \\
   \label{x25}
   &+&\dis \sum_{k=1}^{q}  \dis \sum_{j \in B_{k}, j \in
   L_{2}}\overline{\chi}\;(\lambda_{j}) O\Big(\dis \frac{|(a_{j})_{k_{i}} |^{\beta-2}}{\lambda_{j}^{2}}
    \Big).
\end{eqnarray}
We need to add the indices $j$, $j\in \; ^{C}\Big(
\bigcup_{K=1}^{q} B_{k}\Big) \bigcup \Big\{j \in B_{k} \;\mbox{
s.t }\; \overline{\chi}\; (\lambda_{j})=0  \Big\} .$ Let,
$$\lambda_{i_{0}}= \dis \inf\{\lambda_{i},\; i=1,...,p  \}.$$
We distinguish two cases.\\
\underline{case 1:} there exists $j$ such that
$\overline{\chi}\;(\lambda_{j}) \neq 0$ and $\lambda_{i_{0}} \sim
\lambda_{j}$, $\Big( \gamma' \leq \dis
\frac{\lambda_{i_{0}}}{\lambda_{j}} \leq 1 \Big)$, then we can
appear on the above estimate $-\dis
\frac{1}{\lambda_{i_{0}}^{n-2\sigma}}$ and therefore $- \dis
\sum_{i=1}^{p}\dis \frac{1}{\lambda_{i}^{n-2\sigma}}  $ and $- \dis
\sum_{k \neq r}\varepsilon_{k r}$. Thus we obtain
 $$ \Big \langle \partial J(u),\widetilde{W}_{2}^{5}(u)   \Big \rangle  \leq  - c
 \biggr( \dis
\sum_{i=1}^{p}\dis \frac{1}{\lambda_{i}^{n-2\sigma}} + \dis
\sum_{i\neq j}\varepsilon_{i j}  \biggl)+ O\biggr( \dis
\sum_{k=1}^{q}  \; \dis \sum_{j \in B_{k},\; j\in L_{2}}
\dis\frac{|(a_{j})_{k_{i}} |^{\beta-2}}{\lambda_{j}^{2}} \biggl).
$$
Now let,

$$W_{2}^{5}=\widetilde{W}_{2}^{5}+ m_{1} \dis
\sum_{i=1}^{p} X_{i} ,$$ using  the above estimates with
proposition \ref{p3.3} and \eqref{ab}, we obtain
$$\Big \langle \partial J(u),W_{2}^{5}(u)   \Big \rangle  \leq  - c
 \biggr( \dis
\sum_{i=1}^{p}\dis \frac{1}{\lambda_{i}^{n-2\sigma}} + \dis
\sum_{i\neq j}\varepsilon_{i j} + \dis \sum_{i=1}^{p} \dis
\frac{|\nabla K(a_{i}) |}{\lambda_{i}} \biggl).
$$
\underline{case 2:} For each $j\in B_{k}$, $1\leq k \leq q$ we
have $$\lambda_{i_{0}} << \lambda_{j} \;\; \Big(i.e.\; \dis
\frac{\lambda_{i_{0}}}{\lambda_{j}} < \gamma' \Big)\hbox{ or if
}\lambda_{i_{0}} \sim \lambda_{j} \mbox{ we have }
\overline{\chi}\;(\lambda_{j})=0 .$$ In this case we define

\begin{center}$ D= \Big[\Big\{ i,
\;\overline{\chi}\;(\lambda_{i})=0 \Big\}\bigcup\;\;
^{C}\Big(\bigcup_{k=1}^{q} B_{k}\Big) \Big] \bigcap \Big\{  i,\;
\dis \frac{\lambda_{i}}{\lambda_{i_{0}}}< \dis \frac{1}{\gamma'}
\Big\}$.
\end{center}
It is easy to see that $i_{0} \in D$ and if $i \neq j  \in \Big\{
i, \;\overline{\chi}\;(\lambda_{i})=0 \Big\}\bigcup\;\;
^{C}\Big(\bigcup_{k=1}^{q} B_{k}\Big)$ we have $a_{i}\in
B(y_{l_{i}},\rho)$ and $a_{j}\in B(y_{l_{j}}, \rho)$ with
$y_{l_{i}}\neq y_{l_{j}}$. Let, $$u_{1}= \dis \sum_{i \in D}
\alpha_{i} \delta_{a_{i} \lambda_{i}},$$ $u_{1}$ has to satisfy
one of the four subsets above, that is $u_{1}\in V^{j}_{2}(\sharp
I, \varepsilon)$ for $j=1,2,3$ or $4$. Thus we can apply the
associated vector field which we will denote $Y$ and we have the
estimate

$$\Big \langle \partial J(u),Y(u)   \Big \rangle \leq  -
c \biggr( \dis \sum_{i\in D}\dis \frac{1}{\lambda_{i}^{n-2\sigma}}
+\dis \sum_{i\in D} \dis \frac{|\nabla K(a_{i}) |}{\lambda_{i}}+
\dis \sum_{i\neq j,\; i,j\in D }\varepsilon_{i j} \biggl)
+O\biggr( \dis \sum_{i\in D,\; j\notin D }\varepsilon_{i j}
\biggl).$$ Observe in the above majoration we have the term $-
\dis \frac{1}{\lambda_{i_{0}}^{n-2\sigma}}$, thus we can make appear
$-\dis\sum_{i=1}^{p} \dis\frac{1}{\lambda_{i}^{n-2\sigma}} $. Now
concerning the term $-\dis \sum_{i\neq j}\varepsilon_{i j}$, if $i
\in D$ and $j \in \; ^{C}D$, observe that,$$^{C}D = \Big\{i, \;
\dis \frac{\lambda_{i}}{\lambda_{i_{0}}}
>\dis \frac{1}{\gamma'} \Big\} \cup \Big[\Big\{ i,
\;\overline{\chi}\;(\lambda_{i})\neq 0 \Big\} \cap \Big(
\cup_{k=1}^{q} B_{k}\Big) \Big],$$ we have two situations: either
$j \in \Big[\Big\{ i, \;\overline{\chi}\;(\lambda_{i})\neq0 \Big\}
\bigcap\Big(\cup_{k=1}^{q} B_{k}\Big) \Big]$, then we have
$-\varepsilon_{i j}$ in the estimates \eqref{x25} or $j \in
\Big\{i,\; \dis \frac{\lambda_{i}}{\lambda_{i_{0}}}> \dis
\frac{1}{\gamma'} \Big\}$, we can prove in this cases that
$|a_{i}-a_{j} |\geq \rho$. Thus
$$ \varepsilon_{i j} \leq \dis \frac{c}{(\lambda_{i} \lambda_{j})^{\frac{n-2\sigma}{2}}} <
\dis \frac{ c \gamma'^{\frac{n-2\sigma}{2}} }{(\lambda_{i_{0}}
\lambda_{i})^{\frac{n-2\sigma}{2}}}=o (\varepsilon_{i_{0} i}) \hskip1cm
\mbox{(for} \;\gamma' \;\mbox{ small enough)}.
$$
Thus we derive,
\begin{eqnarray*}
\Big \langle \partial J(u),(\widetilde{W}_{2}^{5}+ m_{1} Y)(u)
\Big \rangle  &\leq & - c\biggr( \dis \sum_{i\in D}   \dis \frac{|
\nabla K(a_{i})|}{\lambda_{i}}
 + \dis \sum_{i=1}^{p} \dis \frac{1}{\lambda_{i}^{n-2\sigma}} + \dis \sum _{i \neq j} \varepsilon_{i j}\biggl)\\
&+&\dis \sum_{K=1}^{q}  \; \dis \sum_{j \in B_{k},\; j\in
L_{2}}\overline{\chi}(\lambda_{j}) O\Big(
\dis\frac{|(a_{j})_{k_{i}} |^{\beta-2}}{\lambda_{j}^{2}}   \Big),
\end{eqnarray*}
and hence, by \eqref{ab}, we have
$$
\Big \langle \partial J(u),\Big(\widetilde{W}_{2}^{5}+ m_{1} Y+
m_{2} \dis \sum_{i=1,\; i \in L_{2}} X_{i}\Big)(u) \Big \rangle
\leq - c\biggr(  \dis \sum_{i=1}^{p} \dis
\frac{1}{\lambda_{i}^{n-2\sigma}} + \dis \sum _{i \neq j}
\varepsilon_{i j}+\dis \sum_{i=1}^{p} \dis \frac{| \nabla
K(a_{i})|}{\lambda_{i}} \biggl),
$$
for $m_{1}$ and $m_{2}$ two small positive constants. In this case
we denote $$W_{2}^{5}:= \widetilde{W}_{2}^{5}+ m_{1} Y+ m_{2} \dis
\sum_{i=1,\; i \in L_{2}} X_{i}.$$ The vector field $W_{2}$ in
$V_{2}(p,\varepsilon)$ will be a convex combination of
$W_{2}^{j},\; j=1,...,5$. This conclude the proof of proposition
\ref{3.1}.

\begin{cor}\label{c4.4}
Let $p\geq 1$. The critical points at infinity of $J$ in
$V(p,\varepsilon)$ correspond to $$(y_{l_{1}},...,y_{l_{p}})_{\infty}:=\dis \sum_{i=1}^{p} \dis \frac{1}{K(y_{l_{i}})^{\frac{n-2\sigma}{2}}} \;\delta_{(y_{l_{i}}, \infty)},$$
where $(y_{l_{1}},...,y_{l_{p}})\in \mathcal P^{\infty}$.
Moreover, such a critical point at infinity has an index equal to
$i(y_{l_{1}},...,y_{l_{p}})_{\infty} =p-1+\dis \sum_{i=1}^{p}n-\widetilde i(y) $.
\end{cor}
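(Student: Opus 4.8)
The plan is to read the statement off from Theorem \ref{1} together with a finite–dimensional reduction and the standard Bahri–type index computation.

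First I would settle the characterization. By Theorem \ref{1} the pseudo–gradient $W$ decreases $J$ at the rate $-c\bigl(\sum_i \lambda_i^{-\beta}+\sum_i |\nabla K(a_i)|/\lambda_i+\sum_{i\neq j}\varepsilon_{ij}\bigr)$, and $\max_i\lambda_i$ stays bounded along its flow–lines except precisely when $a_i\in B(y_{l_i},\rho)$ with $(y_{l_1},\dots,y_{l_p})\in\mathcal P^{\infty}$. Hence the only non–compact asymptotes of the flow, i.e. the only critical points at infinity in $V(p,\varepsilon)$, are produced by letting $a_i\to y_{l_i}$ and $\lambda_i\to+\infty$ for such $p$–tuples. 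The weight $K(y_{l_i})^{-(n-2\sigma)/2}$ attached to each $\delta_{(y_{l_i},\infty)}$ is forced by the defining constraint of $V(p,\varepsilon)$, namely $J(u)^{n/(n-2\sigma)}\alpha_i^{2/(n-2\sigma)}K(a_i)=1+o(1)$: in the limit $a_i\to y_{l_i}$ this gives $\alpha_i\to K(y_{l_i})^{-(n-2\sigma)/2}$ up to the common normalising factor, which is exactly the announced expression for $(y_{l_1},\dots,y_{l_p})_{\infty}$.

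Next I would compute the index. Near such a critical point at infinity I would run the usual reduction: minimise $J(u+\overline v)$ in the normal variable $\overline v$ (as in part $(ii)$ of Theorem \ref{1}) and eliminate the $\alpha_i$ through the constraint, so that $J$ becomes a function of the concentration data $(a_i,\lambda_i)$ alone, to be expanded by means of $(f)_{\beta}$. The index then splits into two independent blocks. For the point variables, writing $K(a_i)=K(y_{l_i})+\sum_{k=1}^n b_k(y_{l_i})|(a_i-y_{l_i})_k|^{\beta}+R$ and using that $J$ is decreasing in $K$, the descending directions of $a_i$ about $y_{l_i}$ are exactly the indices $k$ with $b_k(y_{l_i})>0$; their number is $n-\widetilde i(y_{l_i})$, and summation yields $\sum_{i=1}^p\bigl(n-\widetilde i(y_{l_i})\bigr)$. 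For the dilation variables, the governing quadratic form in $\Lambda=(\lambda_1^{-(n-2\sigma)/2},\dots,\lambda_p^{-(n-2\sigma)/2})$ is the matrix $M(\tau_p)$ of \eqref{4}; the hypothesis $\rho(\tau_p)>0$, i.e. membership of the tuple in $\mathcal C^{\infty}_{n-2\sigma}$ (resp. $\mathcal C^{\infty}_{<(n-2\sigma)}$), is precisely what makes the configuration a genuine critical point at infinity and supplies the extra $p-1$ coming from the relative dilation rates of the $p$ masses. Adding the two blocks gives $i(\tau_p)_{\infty}=p-1+\sum_{i=1}^p\bigl(n-\widetilde i(y_{l_i})\bigr)$.

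I expect the index computation, and specifically the interaction contribution $p-1$, to be the main obstacle. It requires the precise asymptotic expansion of $J$ with the self–interaction, the pairwise terms $\varepsilon_{ij}$, and the flatness remainder $R$ all controlled simultaneously, together with assumption $(A_1)$, which guarantees $\rho(\tau_p)\neq 0$ and hence that the reduced functional is non–degenerate in the $\Lambda$–directions. The mixed regime, where part of the tuple lies in $\mathcal C^{\infty}_{n-2\sigma}$ and part in $\mathcal C^{\infty}_{<(n-2\sigma)}$, needs the estimates \eqref{a10}--\eqref{11} showing that the cross terms between the two groups are negligible, so that the index is additive over the two blocks and the count reduces to the pure cases already handled in Propositions \ref{3.1} and \ref{3.2}.
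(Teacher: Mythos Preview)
The paper gives no proof of this corollary; it is stated as an immediate consequence of Theorem~\ref{1} and the standard Bahri machinery from \cite{b2}. Your characterization of the critical points at infinity is exactly the intended reading of Theorem~\ref{1}, and your derivation of the coefficients $K(y_{l_i})^{-(n-2\sigma)/2}$ from the constraint $J(u)^{n/(n-2\sigma)}\alpha_i^{2/(n-2\sigma)}K(a_i)=1+o(1)$ is correct.

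There is, however, a misattribution in your index computation. The contribution $p-1$ does \emph{not} come from the dilation block $\Lambda=(\lambda_1^{-(n-2\sigma)/2},\dots,\lambda_p^{-(n-2\sigma)/2})$. When $\rho(\tau_p)>0$, the quadratic form ${}^t\Lambda\,M(\tau_p)\,\Lambda$ is positive definite, so the reduced functional \emph{increases} as $|\Lambda|$ grows (i.e.\ as the $\lambda_i$ come back from infinity); the $\Lambda$-block therefore contributes $0$ to the Morse index, not $p-1$. The positivity of $M$ is what forces the flow to escape to infinity and hence creates the critical point at infinity, but it does not produce unstable directions. Moreover, for tuples in $\mathcal C^{\infty}_{<(n-2\sigma)}$ the matrix $M$ plays no role at all (by \eqref{11} the $\varepsilon_{ij}$ are negligible against $\lambda_i^{-\beta_i}$), so your argument cannot account for the $p-1$ in that regime either.

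The $p-1$ actually comes from the $\alpha$-variables. The leading term of $J\bigl(\sum_i\alpha_i\delta_{(y_{l_i},\lambda_i)}\bigr)$ is, up to a constant, $\bigl(\sum_i\alpha_i^2\bigr)\bigl(\sum_i K(y_{l_i})\alpha_i^{2n/(n-2\sigma)}\bigr)^{-(n-2\sigma)/n}$, a degree-zero homogeneous function of $(\alpha_1,\dots,\alpha_p)$ whose unique critical point on the $(p-1)$-sphere, at $\alpha_i\propto K(y_{l_i})^{-(n-2\sigma)/2}$, is a strict maximum. This furnishes the $p-1$ negative directions uniformly in both regimes $\beta<n-2\sigma$ and $\beta=n-2\sigma$, and then your count $\sum_i\bigl(n-\widetilde i(y_{l_i})\bigr)$ for the $a_i$-block completes the formula.
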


\section{Proof of Theorem \ref{TH2}}

Using corollary \ref{c4.4}, the only critical points at infinity associated to problem \eqref{1.3} correspond to $w_{\infty}=(y_{i_{1}},...,y_{i_{p}})\in\mathcal P^{\infty}$. We prove Theorem \ref{TH2} by contradiction. Therefore, we assume that equation \eqref{1.3} has no solution. For any $w_{\infty}\in\mathcal P^{\infty}$, let $c(w)_{\infty}$ denote the associated critical value at infinity. Here we choose to consider a simplified situation where for any $w_{\infty}\neq w_{\infty}^{'}$, $c(w)_{\infty}\neq c(w^{'})_{\infty}$ and thus order the $c(w)_{\infty}$'s, $w_{\infty}\in \mathcal P^{\infty}$ as

$$c(w_{1})_{\infty}<...<c(w_{k_{0}})_{\infty}. $$
For any $\overline{c}\in \mathbb{R}$, let $J_{\overline{c}}=\{ u \in \Sigma^{+}, \, J(u) \leq \overline{c} \}$. By using a deformation lemma (see \cite{BR1}), we know that if $c\,(\,w_{k-1}\,)_{\infty}<\,a\,<\,c\,(\,w_{k}\,)_{\infty}\,<\,b\,<\,c\,(\,w_{k+1}\,)_{\infty}$, then

\begin{eqnarray}\label{82}
J_{b}\simeq J_{a}\cup W_{u}^{\infty}(w_{k})_{\infty},
\end{eqnarray}
Here $W_{u}^{\infty}(w_{k})_\infty$ denote the unstable manifolds at infinity of $(w_k)_{\infty}$(see \cite{b2}) and $\simeq$ denotes retracts by deformation.\\
We apply the Euler-Poincar\'{e} characteristic of both sides of \eqref{82}, we find that

\begin{eqnarray}\label{48}
 \chi(J_{b})=\chi(J_{a})+(-1)^{i(w_{k})_{\infty}},
\end{eqnarray}
where ${i(w_{k})_{\infty}}$ denotes the index of the critical point at infinity $(w_{k})_{\infty}$. Let
$$b_{1}<c(w_{1})_{\infty}=\min_{u\in\Sigma^{+} } J(u)<b_{2}<c(w_{2})_{\infty}<...<b_{k_{0}}<c(w_{k_{0}})_{\infty}<b_{k_{0}+1}. $$
Since we have assumed that \eqref{1.3} has no solution, $J_{b_{k_{0}+1}}$ is a retard by deformation of $\Sigma^{+}$. Therefore $\chi(J_{b_{k_{0}+1}})=1,$
since $\Sigma^{+}$ is a contractible set. Now using \eqref{48}, we derive after recalling that $\chi(J_{b_{1}})=\chi(\emptyset)=0,$

\begin{eqnarray}\label{49}
1=\dis\sum_{j=1}^{k_{0}}(-1)^{i(w_{j})_{\infty}}.
\end{eqnarray}
So, if \eqref{49} is violated, then \eqref{1.3} has a solution. This complete the proof of Theorem \ref{TH2}.

\appendix

\section{Appendix A}

This appendix is devoted to some useful expansions of the gradient of $J$ near a potential critical points at infinity consisting of $p$ masses. Those propositions are proved under some technical estimates of the different integral quantities, extracted from \cite{b1} (with some change). In order to simplify the notations, in the remainder we write
$\delta_{i}$ instead of $\delta_{(a_{i},\lambda_{i})}$.

\begin{prop} \label{p3.2}
Assume that $K$ satisfies $(f)_{\beta}$, $1<\beta<n.$ For any
$U=\dis \sum_{j=1}^{p} \alpha_{j} \delta_{j}$ in
$V(p,\varepsilon)$, the following expansion hold
\begin{eqnarray}
 \nonumber
(i)\Big\langle \partial
J(U),\displaystyle\lambda_{i}\frac{\partial\displaystyle\delta_{i}}{\partial\displaystyle\lambda_{i}}\Big\rangle&=&-2c_{2}J(u)\displaystyle
\sum_{i\neq
j}\displaystyle\alpha_{j}\displaystyle\lambda_{i}\frac{\partial\displaystyle\varepsilon_{ij}}{\partial\displaystyle\lambda_{i}}
+o\biggr(\displaystyle \sum_{i\neq j}
\displaystyle\varepsilon_{ij}\biggr) +\displaystyle
o\biggr(\dis\frac{1}{\lambda_{i}}\biggr),
\end{eqnarray}where $c_{2}=c_{0}^{\frac{2n}{n-2\sigma}}\displaystyle
\int_{ \displaystyle \mathbb R^{n}}\frac{d\displaystyle
 y}{\big(1+|y|^{2}\big)^{\frac{n+2\sigma}{2}}}.$
\\
$(ii)$ If $a_{i}\in B(y_{j_{i}},\rho)$, $ y_{j_{i}}\in
\mathcal{K}$ and $\rho$ is a  positive constant small enough
, we have\\\\
$\Big \langle \partial J(U),\lambda_{i} \dis \frac{\partial
\delta_{i}}{\partial \lambda_{i}} \Big\rangle$
\begin{eqnarray}
 \nonumber
 &=&
  2J(u)\biggr[ -{c}_{2}
    \dis \sum_{j \neq i} \alpha_{j} \lambda_{i}\dis \frac{\partial \varepsilon_{i j}}{\partial
    \lambda_{i}}+\dis \frac{n-2\sigma}{2n}{c}_{0}^{\frac{2n}{n-2\sigma}} \beta \dis \frac{\alpha_{i}}{K({a_{i})}} \;\dis
  \frac{1}{\lambda_{i}^{\beta}}\dis \sum_{k=1}^{n}  b_{k}\\
  \nonumber&\times& \dis
  \int_{\dis \mathbb{R}^{n}} sign \Big(x_{k}+ \lambda_{i} (a_{i}-y_{j_{i}})_{k}
  \Big)\Big|x_{k}+ \lambda_{i} (a_{i}-y_{j_{i}})_{k}
  \Big|^{\beta-1} \dis \frac{x_{k}}{(1+| x|^{2} )^{n}} dx\\
 \label{1'}
   &+& o\Big( \dis \sum_{j \neq i} \varepsilon_{i j} +  \dis \sum_{j=1}^{p} \dis \frac{1}{\lambda_{j}^{\beta}}
    \Big)\biggl].
\end{eqnarray}
$(iii)$ Furthermore, if $\lambda_{i} |a_{i}-y_{j_{i}}|< \delta $, for $\delta$ very  small, we then have \\
\begin{eqnarray}
 \nonumber
\Big \langle  \partial J(U),\lambda_{i} \dis \frac{\partial
\delta_{i}}{\partial \lambda_{i}} \Big\rangle &=&
  2J(u)\biggr[\dis \frac{n-2\sigma}{2n}\;\beta\;{c}_{3}\;\dis \frac{\alpha_{i}}{K(a_{i})} \;
  \frac{\sum_{k=1}^{n}b_{k}}{\lambda_{i}^{\beta}}-{c}_{2}
    \dis \sum_{j \neq i} \alpha_{j} \lambda_{i}\dis \frac{\partial \varepsilon_{i j}}{\partial
    \lambda_{i}}\hskip1.5cm\\
    \label{eq22}
    &+&o\Big( \dis \sum_{j \neq i} \varepsilon_{i j} + \dis \sum_{j=1}^{p} \dis \frac{1}{\lambda_{j}^{\beta}}
    \Big)\biggl],
\end{eqnarray}
where ${c}_{3}= {c}_{0}^{\frac{2n}{n-2\sigma}} \dis \int_{\dis S^{n}}
\dis \frac{| x_{1} |^{\beta}}{(1+ |x |^{2})^{n}} dx$.\\
\end{prop}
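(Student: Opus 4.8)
The plan is to start from the explicit formula for the gradient of the quotient functional. Writing $2^{*}=\frac{2n}{n-2\sigma}$, a direct differentiation of $J(U)=\|U\|^{2}\big/\big(\int_{S^{n}}KU^{2^{*}}\big)^{2/2^{*}}$ gives, for every direction $h$,
\[
\Big\langle \partial J(U),h\Big\rangle = 2J(U)\left[\frac{\langle U,h\rangle}{\|U\|^{2}}-\frac{\int_{S^{n}}KU^{2^{*}-1}h}{\int_{S^{n}}KU^{2^{*}}}\right].
\]
I would then specialize to $h=\lambda_{i}\frac{\partial\delta_{i}}{\partial\lambda_{i}}$ and reduce the whole statement to two expansions: the quadratic piece $\langle U,h\rangle$ and the nonlinear piece $\int_{S^{n}}KU^{2^{*}-1}h$. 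Throughout I would use the normalization $J(u)^{n/(n-2\sigma)}\alpha_{i}^{2/(n-2\sigma)}K(a_{i})=1+o(1)$ built into $V(p,\varepsilon)$, together with the standard interaction estimates $\langle\delta_{i},\delta_{j}\rangle=c\,\varepsilon_{ij}(1+o(1))$ and $\int\delta_{i}^{2^{*}-1}\delta_{j}=c\,\varepsilon_{ij}(1+o(1))$ collected in \cite{b1}.

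For the quadratic piece I would write $\langle U,h\rangle=\sum_{j}\alpha_{j}\langle\delta_{j},\lambda_{i}\partial_{\lambda_{i}}\delta_{i}\rangle$. The diagonal term $j=i$ vanishes because $\|\delta_{i}\|$ is scale invariant, so $\lambda_{i}\partial_{\lambda_{i}}\|\delta_{i}\|^{2}=0$; each off-diagonal term equals $\alpha_{j}\lambda_{i}\partial_{\lambda_{i}}\langle\delta_{i},\delta_{j}\rangle$, which the interaction estimates turn into a multiple of $\lambda_{i}\partial_{\lambda_{i}}\varepsilon_{ij}$ modulo $o(\varepsilon_{ij})$. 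The nonlinear piece is handled by expanding $U^{2^{*}-1}=\big(\sum_{j}\alpha_{j}\delta_{j}\big)^{2^{*}-1}$ into the self term $\alpha_{i}^{2^{*}-1}\delta_{i}^{2^{*}-1}$ plus cross contributions; the cross contributions again reduce to $\lambda_{i}\partial_{\lambda_{i}}\varepsilon_{ij}$, and assembling them with the off-diagonal quadratic terms produces exactly the leading $-2c_{2}J(u)\sum_{j\neq i}\alpha_{j}\lambda_{i}\partial_{\lambda_{i}}\varepsilon_{ij}$ of part $(i)$.

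The decisive object is the self term $\int_{S^{n}}K\,\delta_{i}^{2^{*}-1}\lambda_{i}\partial_{\lambda_{i}}\delta_{i}$. Since $\int\delta_{i}^{2^{*}}$ is scale invariant, $\int\delta_{i}^{2^{*}-1}\lambda_{i}\partial_{\lambda_{i}}\delta_{i}=\frac{\lambda_{i}}{2^{*}}\partial_{\lambda_{i}}\int\delta_{i}^{2^{*}}=0$, so only the variation of $K$ survives, namely $\int_{S^{n}}\big(K-K(a_{i})\big)\delta_{i}^{2^{*}-1}\lambda_{i}\partial_{\lambda_{i}}\delta_{i}$. After the change of variable that renormalizes the bubble, $\lambda_{i}\partial_{\lambda_{i}}\delta_{i}$ becomes an explicit $\lambda$-independent profile in the new variable, and a plain continuity argument for $K$ shows the integral is $o(1/\lambda_{i})$ in general, closing $(i)$. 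To obtain $(ii)$ I would instead insert the flatness expansion $K(x)-K(a_{i})=\sum_{k}b_{k}\big(|(x-y_{j_{i}})_{k}|^{\beta}-|(a_{i}-y_{j_{i}})_{k}|^{\beta}\big)+o(\cdot)$; differentiating $|\cdot|^{\beta}$ brings the factor $\beta\,\mathrm{sign}(\cdot)|\cdot|^{\beta-1}$ and, after scaling, exactly the integral $\int_{\mathbb{R}^{n}}\mathrm{sign}\big(x_{k}+\lambda_{i}(a_{i}-y_{j_{i}})_{k}\big)\,|x_{k}+\lambda_{i}(a_{i}-y_{j_{i}})_{k}|^{\beta-1}\frac{x_{k}}{(1+|x|^{2})^{n}}\,dx$ with the stated $\lambda_{i}^{-\beta}$ prefactor. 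Finally $(iii)$ follows from $(ii)$ on letting $\lambda_{i}|a_{i}-y_{j_{i}}|<\delta\to0$: then $|x_{k}+\lambda_{i}(a_{i}-y_{j_{i}})_{k}|^{\beta-1}\to|x_{k}|^{\beta-1}$, the integrand collapses to $|x_{k}|^{\beta}/(1+|x|^{2})^{n}$, and summation over $k$ yields the constant $c_{3}\sum_{k}b_{k}$.

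The main obstacle is the control of the nonlinear term: one must justify that the self term reduces exactly to the $K$-variation, split the renormalized integral into the region where the flatness expansion applies and the complementary far region, and verify that all bubble-bubble cross terms are genuinely of order $\sum_{j\neq i}\varepsilon_{ij}$ or $\lambda_{i}\partial_{\lambda_{i}}\varepsilon_{ij}$ rather than larger. The passage from the uniform $o(1/\lambda_{i})$ bound in $(i)$ to the sharp $\lambda_{i}^{-\beta}$ asymptotics in $(ii)$–$(iii)$ is precisely where the hypothesis $(f)_{\beta}$ and the decay condition on the remainder $R$ enter, so the delicate point is tracking $R$ through the scaling to confirm that it contributes only $o\big(\sum_{j}\lambda_{j}^{-\beta}\big)$.
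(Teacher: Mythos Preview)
Your proposal is correct and follows exactly the standard Bahri-type scheme that the paper invokes: the appendix states these expansions without proof, noting only that they ``are proved under some technical estimates of the different integral quantities, extracted from \cite{b1} (with some change).'' Your outline---differentiating the quotient, using scale invariance to kill the diagonal pieces, reducing cross terms to $\lambda_i\partial_{\lambda_i}\varepsilon_{ij}$ via the interaction estimates, and isolating the $K$-variation in the self term via the flatness expansion---is precisely that scheme.

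One small clarification on part~(ii): the factor $\beta\,\mathrm{sign}(\cdot)|\cdot|^{\beta-1}$ and the kernel $x_k/(1+|x|^2)^n$ do not appear directly from ``differentiating $|\cdot|^\beta$'' in the sense of a $\lambda_i$-derivative, but rather from an integration by parts. After scaling, the weight coming from $\delta_i^{2^*-1}\lambda_i\partial_{\lambda_i}\delta_i$ is proportional to $(1-|x|^2)/(1+|x|^2)^{n+1}$, and one uses the identity
\[
\frac{1-|x|^2}{(1+|x|^2)^{n+1}}=\frac{1}{n}\,\mathrm{div}\!\left(\frac{x}{(1+|x|^2)^{n}}\right)
\]
to move a derivative onto $|x_k+\lambda_i(a_i-y_{j_i})_k|^{\beta}$, which is what produces $\beta\,\mathrm{sign}(\cdot)|\cdot|^{\beta-1}$ paired with $x_k/(1+|x|^2)^n$. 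With this adjustment your argument goes through exactly as in \cite{b1}.
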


\begin{prop} \label{p3.3}
Under condition $(f)_{\beta}$, $1<\beta < n,$ for each    $ U =\dis
\sum_{j=1}^{p}\alpha_{j} \delta_{j} \in V(p, \varepsilon)$, we
have
\begin{eqnarray}
 \nonumber
(i)\Big\langle
\partial
J(U),\frac{1}{\displaystyle\lambda_{i}}\frac{\partial\displaystyle\delta_{i}}{\partial
a_{i}}\Big\rangle&=&-c_{5}J(u)^{2}
\displaystyle\alpha_{i}^{\frac{n+2\sigma}{n-2\sigma}}\displaystyle
\frac{\nabla
K(a_{i})}{\displaystyle\lambda_{i}}+O\biggr(\displaystyle
\sum_{i\neq j}
\displaystyle \frac{1}{\lambda_{i}}\big|\frac{\partial \varepsilon_{ij} }{\partial a_{i}}\big|\biggr)\\
\nonumber&+&o\biggr(\displaystyle \sum_{i\neq j}
\displaystyle\varepsilon_{ij}+\frac{1}{\lambda_{i}}\biggr),
\end{eqnarray}
where $c_{5}=\displaystyle \int_{ \displaystyle \mathbb
R^{n}}\frac{d\displaystyle
 y}{\big(1+|y|^{2}\big)^{n}}$.\\
(ii) if $a_{i} \in B(y_{j_{i}},\rho)$, $y_{j_{i}}\in \mathcal{K}$, we have\\\\
$\Big\langle \partial
J(U),\displaystyle\frac{1}{\displaystyle\lambda_{i}}\frac{\partial\displaystyle\delta_{i}}{\displaystyle\partial
(a_{i})_{k}}\Big\rangle$=
\begin{eqnarray}\nonumber
&-&
2(n-2\sigma)c_{0}^{\frac{2n}{n-2\sigma}}\alpha_{i}^{\frac{n+2\sigma}{n-2\sigma}}J(u)^{2}
\frac{1}{\displaystyle\lambda_{i}^{\beta}}
\displaystyle\int_{\displaystyle\mathbb R^{n}}
b_{k}\big|x_{k}+\displaystyle\lambda_{i}(a_{i}-y_{j_{i}})_{k}\big|^{\beta}\frac{x_{k}}
{\displaystyle\big(1+\big|x\big|^{2}\big)^{n+1}}
\displaystyle dy \\
\nonumber & +&o\biggr(\displaystyle \sum_{i\neq j}
\displaystyle\varepsilon_{ij}\biggr)+\displaystyle
o\biggr(\displaystyle\sum_{i=1}^{p}\frac{1}{\lambda_{i}^{\beta}}\biggr)+O\biggr(\displaystyle
\sum_{i\neq j} \displaystyle
\frac{1}{\lambda_{i}}\big|\frac{\partial \varepsilon_{ij}
}{\partial a_{i}}\big|\biggr),
\end{eqnarray}
where $k=1,...,n$ and $(a_{i})_{k}$ is the $k^{th}$ component if
$a_{i}$ in some geodesic normal coordinates system.
\end{prop}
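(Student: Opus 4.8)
The plan is to begin from the differentiated form of $J$ on the unit sphere $\Sigma$. Writing $J(U)=\|U\|^2/D(U)^{\frac{n-2\sigma}{n}}$ with $D(U)=\int_{S^n}KU^{\frac{2n}{n-2\sigma}}dx$ and differentiating, one gets for any tangent direction $h$ the identity
$$\langle \partial J(U),h\rangle=2J(U)\Big[\langle U,h\rangle-J(U)^{\frac{n}{n-2\sigma}}\int_{S^n}KU^{\frac{n+2\sigma}{n-2\sigma}}h\,dx\Big],$$
and I would apply it with $h=\frac{1}{\lambda_i}\frac{\partial\delta_i}{\partial (a_i)_k}$, exactly parallel to the treatment of the direction $\lambda_i\frac{\partial\delta_i}{\partial\lambda_i}$ in Proposition \ref{p3.2}. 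The proof then reduces to estimating the quadratic inner product $\langle U,h\rangle$ and the nonlinear integral separately, and assembling the two.

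For the quadratic term, decompose $U=\alpha_i\delta_i+\sum_{j\neq i}\alpha_j\delta_j$. The self-interaction $\langle\delta_i,\frac{1}{\lambda_i}\frac{\partial\delta_i}{\partial (a_i)_k}\rangle=\frac{1}{2\lambda_i}\frac{\partial}{\partial (a_i)_k}\|\delta_i\|^2$ vanishes, since the norm of the bubble is invariant under translation of $a_i$. Each cross term is then controlled through the interaction estimate $\langle\delta_i,\delta_j\rangle\sim c\,\varepsilon_{ij}$, which yields precisely the contribution $O\big(\sum_{j\neq i}\frac{1}{\lambda_i}|\frac{\partial\varepsilon_{ij}}{\partial a_i}|\big)$ appearing in the statement; these are the integral estimates borrowed (with the usual adjustments) from \cite{b1}.

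For the nonlinear term I would expand $U^{\frac{n+2\sigma}{n-2\sigma}}$, the leading piece $\alpha_i^{\frac{n+2\sigma}{n-2\sigma}}\delta_i^{\frac{n+2\sigma}{n-2\sigma}}$ carrying the main contribution while the cross terms are $o(\sum_{j\neq i}\varepsilon_{ij})$. The core object is $\int_{S^n}K\,\delta_i^{\frac{n+2\sigma}{n-2\sigma}}\frac{1}{\lambda_i}\frac{\partial\delta_i}{\partial (a_i)_k}$; after the change of variables $y=\lambda_i(x-a_i)$ the differentiation factor produces the odd weight $y_k/(1+|y|^2)^{n+1}$, so the constant value $K(a_i)$ integrates to zero and only $K(x)-K(a_i)$ survives. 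In case (i), away from $\mathcal K$, I would insert the Taylor expansion $K(x)-K(a_i)=\nabla K(a_i)\cdot(x-a_i)+o(|x-a_i|)$; only the $k$-th direction survives, and computing $\int_{\mathbb{R}^n}y_k^2/(1+|y|^2)^{n+1}dy$ produces the main term $-c_5J(u)^2\alpha_i^{\frac{n+2\sigma}{n-2\sigma}}\nabla K(a_i)/\lambda_i$. In case (ii), near $y_{j_i}$, I would instead use the flatness expansion $(f)_\beta$: writing $(x-y_{j_i})_l=(a_i-y_{j_i})_l+y_l/\lambda_i$ turns $|(x-y_{j_i})_l|^\beta$ into $\lambda_i^{-\beta}|y_l+\lambda_i(a_i-y_{j_i})_l|^\beta$, and oddness kills the terms with $l\neq k$ as well as the constant $|(a_i-y_{j_i})_k|^\beta$, leaving exactly the integral with prefactor $\lambda_i^{-\beta}$ displayed in part (ii).

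The delicate point will be controlling the remainder $R$ in $(f)_\beta$ together with the non-smoothness of $|\cdot|^\beta$ for non-integer $\beta$. Using the decay hypothesis $\sum_{s=0}^{[\beta]}|\nabla^sR(y)||y|^{-\beta-s}=o(1)$ and the same scaling, the contribution of $R$ is $o(\lambda_i^{-\beta})=o(\sum_j\lambda_j^{-\beta})$, matching the error term, while the subleading Sobolev interaction integrals must be shown to be $O\big(\frac{1}{\lambda_i}|\frac{\partial\varepsilon_{ij}}{\partial a_i}|\big)$ or $o(\sum_{j\neq i}\varepsilon_{ij})$. Collecting the prefactor $2J(U)$, absorbing powers via the normalization $\alpha_i^{\frac{4\sigma}{n-2\sigma}}K(a_i)J(u)^{\frac{n}{n-2\sigma}}=1+o(1)$ valid on $V(p,\varepsilon)$, and identifying the constants $c_0$ and $c_5$, then yields both expansions.
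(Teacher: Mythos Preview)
Your proposal is correct and follows exactly the standard Bahri methodology that the paper invokes: the appendix does not spell out a proof of this proposition but states that it is ``proved under some technical estimates of the different integral quantities, extracted from \cite{b1} (with some change)''. Your differentiation of $J$, the vanishing of the self-interaction, the control of cross terms via $\partial\varepsilon_{ij}/\partial a_i$, and the treatment of the nonlinear integral by Taylor expansion in case (i) and the flatness expansion $(f)_\beta$ in case (ii) are precisely the computations one carries out along the lines of \cite{b1}, so there is nothing to add.
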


\begin{prop}\label{p24}
Let $n\geq 2$. Suppose that $K$ satisfies  $(f)_{\beta}$, with $1<\beta<n$. There exists $c > 0$ such that the following holds \\$ \|
\overline{v }\|\leq c \dis \sum_{i=1}^{p}\biggl[\dis
\frac{1}{\lambda_{i}^{\frac{n}{2}}} + \dis
\frac{1}{\lambda_{i}^{\beta}} + \dis \frac{| \nabla K(a_{i})|}{
\lambda_{i}} +\dis \frac{(\log
\lambda_{i})^{\frac{n+2\sigma}{2n}}}{\lambda_{i}^{\frac{n+2\sigma}{2}}} \biggr] + \;c\; \left\{%
\begin{array}{ll}
    \dis \sum_{k \neq r} \varepsilon_{k\; r}^{\frac{n+2\sigma}{2(n-2\sigma)}}\Big(\log \varepsilon_{k r}^{-1} \Big)^{\frac{n+2\sigma}{2n}}, & \hbox{ if } n \geq 3 \\
 \dis \sum_{k \neq r} \varepsilon_{k\; r}\Big(\log \varepsilon_{k r}^{-1} \Big)
    ^{\frac{n-2\sigma}{n}}\;\;\;\;\;, & \hbox{ if } n < 3. \\
\end{array}%
\right. $
\end{prop}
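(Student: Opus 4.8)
The plan is to realize $\overline{v}$ as the unique minimizer of the map $v \mapsto J\big(\sum_{i=1}^{p}\alpha_i\delta_i + v\big)$ over the subspace
$$E_{a,\lambda}=\Big\{ v\in H^1(D^n):\ \langle v,\delta_i\rangle=\Big\langle v,\tfrac{\partial\delta_i}{\partial\lambda_i}\Big\rangle=\Big\langle v,\tfrac{\partial\delta_i}{\partial(a_i)_k}\Big\rangle=0,\ 1\le i\le p,\ 1\le k\le n\Big\},$$
and to bound its norm by the norm of the linear part of $\partial J(u)$ on $E_{a,\lambda}$. First I would write the second-order expansion
$$J(u+v)=J(u)+\langle\partial J(u),v\rangle+\tfrac{1}{2}\,Q_u(v,v)+\mathcal R(v),\qquad u=\sum_{i=1}^{p}\alpha_i\delta_i,$$
where $Q_u$ is the quadratic form of the second variation and $\mathcal R(v)=o(\|v\|^2)$ is the super-quadratic remainder. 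The orthogonality conditions defining $E_{a,\lambda}$ are chosen precisely to annihilate the non-positive directions of the Hessian of the pure bubble functional, namely $\delta_i,\ \partial\delta_i/\partial\lambda_i$ and $\partial\delta_i/\partial(a_i)_k$.

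The key structural input is the coercivity of $Q_u$: adapting the spectral analysis of \cite{b1} to the fractional setting through the Caffarelli--Silvestre extension recalled in Section 2, one shows that there is $c_0>0$, independent of $u\in V(p,\varepsilon)$ for $\varepsilon$ small, with $Q_u(v,v)\ge c_0\,\|v\|^2$ for every $v\in E_{a,\lambda}$. Granting this, the existence and uniqueness of $\overline v$ follow from minimizing a strictly convex functional on a closed subspace (equivalently, by the implicit function theorem, which also yields the differentiability of $\overline v$ in $(\alpha_i,a_i,\lambda_i)$ needed in Theorem~\ref{1}), and the standard estimate gives $\|\overline v\|\le c\,\|\ell\|_{E_{a,\lambda}^{*}}$, where $\ell(v):=\langle\partial J(u),v\rangle$. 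Thus the whole proposition reduces to bounding the linear form $\ell$ on $E_{a,\lambda}$.

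To estimate $\ell$ I would expand the nonlinear pairing $\int_{S^n}K\big(\sum_i\alpha_i\delta_i\big)^{\frac{n+2\sigma}{n-2\sigma}}v$ term by term. Feeding in the flatness expansion $K(x)=K(y)+\sum_k b_k|(x-y)_k|^{\beta}+R(x-y)$ around each critical point, the local behaviour of $K$ contributes the terms $1/\lambda_i^{\beta}$, the non-vanishing gradient produces $|\nabla K(a_i)|/\lambda_i$, and the geometric self-interaction (the discrepancy between the bubble on $S^n$ and the flat bubble) yields $1/\lambda_i^{n/2}$ together with the borderline term $(\log\lambda_i)^{\frac{n+2\sigma}{2n}}/\lambda_i^{\frac{n+2\sigma}{2}}$. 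The cross terms between distinct bubbles, handled via H\"older's inequality applied to the products $\delta_i^{\frac{4\sigma}{n-2\sigma}}\delta_j\,v$, give the interaction contributions $\varepsilon_{kr}^{\frac{n+2\sigma}{2(n-2\sigma)}}(\log\varepsilon_{kr}^{-1})^{\frac{n+2\sigma}{2n}}$ when $n\ge3$ and $\varepsilon_{kr}(\log\varepsilon_{kr}^{-1})^{\frac{n-2\sigma}{n}}$ when $n<3$, the change of regime reflecting the integrability threshold at $n=3$.

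The hard part will be obtaining these sharp exponents and logarithmic refinements rather than a crude $O(\varepsilon_{kr})$ bound. This requires precise $L^p$ estimates of the products $\delta_i^{\theta}\delta_j$ on $\R^n$, carried out on the extension $D^n$ and sensitive to the nonlocal character of $(-\Delta)^{\sigma}$; the logarithms emerge from splitting each integral into the concentration region $\lambda_i|x-a_i|\lesssim1$ and its complement, where borderline integrability forces the $\log$ factors, while the dichotomy at $n=3$ comes from comparing the decay exponent $\frac{n+2\sigma}{n-2\sigma}$ with the dimension. Keeping track of these constants uniformly over $V(p,\varepsilon)$, and ensuring the super-quadratic remainder $\mathcal R(v)$ does not spoil the coercivity argument, is the technical crux.
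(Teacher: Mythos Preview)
Your approach is correct and is precisely the standard Bahri method that the paper itself defers to: the paper does not give an explicit proof of this proposition but states in the appendix that these expansions are ``proved under some technical estimates of the different integral quantities, extracted from \cite{b1} (with some change).'' Your outline---realizing $\overline v$ as the minimizer over the orthogonal complement $E_{a,\lambda}$, invoking coercivity of the second variation on that subspace, and then bounding the linear form $\langle\partial J(u),v\rangle$ by expanding the nonlinear term with the flatness hypothesis and the interaction estimates---is exactly the scheme of \cite{b1} adapted to the fractional setting, and matches what the paper intends.
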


\end{document}